\newtheorem{conj}[thm]{Conjecture}
\newcommand{\PP}{\mathbb{P}}
\newcommand{\mc}[1]{\mathcal{#1}} % short for mathcal
\newcommand{\mt}[1]{\text{#1}}
\DeclareMathOperator{\Ker}{\mathsf{Ker}}
\DeclareMathOperator{\rank}{rank}
\DeclareMathOperator{\WW}{\mathsf{W}}
\DeclareMathOperator{\gr}{Gr}
\DeclareMathOperator{\Hom}{Hom}
\DeclareMathOperator{\stab}{Stab}
\DeclareMathOperator{\sd}{sd}
\DeclareMathOperator{\Sym}{\mathsf{Sym}}
\DeclareMathOperator{\chr}{char}
\begin{document}

\title[Complete Quadrics]{Combinatorial models for the variety of complete quadrics}

\author{Soumya D. Banerjee}
\address{Ben-Gurion University of the Negev}
\email{soumya@math.bgu.ac.il}

\author{Mahir Bilen Can}
\author{Michael Joyce}
\address{Tulane University}
\email{mcan@tulane.edu and mjoyce3@tulane.edu}

\keywords{complete quadrics, Bia{\l}ynicki-Birula decomposition, Bruhat order, Richardson-Springer monoid}

\subjclass[2010]{19E08, 14M27}

\date{\today}

\begin{abstract}
We develop several combinatorial models that are used in the study of the variety of complete quadrics $\mcal{X}$. We
introduce the notion of a \emph{degenerate involution} and \emph{barred permutation} that parametrize geometrically
meaningful subsets of $\mcal{X}$. Using these combinatorial objects, we characterize particular
families of curves and surfaces on $\mcal{X}$ that are important for equivariant-cohomology calculations. We investigate the Bruhat order on Borel orbits in
$\mcal{X}$ and describe it in terms of (reverse) $\WW$-sets. Moreover, we prove (by a
counter example) that the Bruhat order induced from the symmetric group on $\mu$-involutions is
not isomorphic to the geometric Bruhat order on Borel orbits, unlike the case of ordinary involutions in
symmetric group. We also describe the Bia{\l}ynicki-Birula cell decomposition for $\mcal{X}$ in terms of the combinatorics
of degenerate involutions.  
\end{abstract}

\maketitle

%%% Local Variables:
%%% mode: latex
%%% TeX-master: t
%%% End:

%                 +++ Ending file MC-0.tex +++   
%------------------------------------------------------------------------------------------------------------------------------------------------

%                 +++ Starting file MC-0-intro.tex +++   

%  INTRO [[[

\section{Introduction}

The variety of complete quadrics $\mcal{X}$ has a venerable  place in classical algebraic geometry  alongside 
Grassmanians and flag varieties. It sits at the crossroads of algebraic geometry and representation theory appearing on one hand as a
parameter space in classical enumerative problems \cite{Chasles64} and on the other hand as an early motivating example
of the wonderful compactification of a symmetric space \cite{DP83}. However, our knowledge about the geometry of this variety
is not nearly as extensive as Grassmanians or flag varieties. Indeed, the geometry of this variety is much more
intricate than that of the Grassmanian or the complete flag variety.

In this paper we introduce several elementary combinatorial objects that are natural generalizations of involutions in the symmetric
group. We call them \emph{degenerate involutions}, see below. These objects are naturally associated to the geometry of
$\mcal{X}$. On one hand, the goal of this paper is to understand the geometry of $\mcal{X}$ in terms of the combinatorics of these degenerate
involutions and on the other hand we wish to understand the combinatorial properties of degenerate involutions that arise
from geometry of wonderful compactifications.

The variety of complete quadrics $\mathcal{X}_{n}$ is the wonderful compactification of the homogeneous space
$SL_{n}/SO_{n}$, for $n \geq 2$, in the sense of De Concini and Procesi (see \cite{DP83}). The $SL_{n}$-orbits in $\mcal{X}_{n}$ are naturally indexed by
compositions of $n$ and for a fixed composition $\mu$ the $SL_{n}$-orbit $\mcal{O}^{\mu}$ admits a finer decomposition
into Borel orbits. To fix ideas, we work with the Borel subgroup $B$ of the upper triangular matrices in $SL_{n}$ and
the maximal torus $T \subset B$ of diagonal matrices in $SL_{n}$.  The Borel orbits are parametrized by combinatorial
objects called \emph{$\mu$-involutions}. Roughly, these are permutations of $1,2,\dots,n$ that, when subdivided into strings whose lengths are given by the parts of
$\mu$, have each string represent an involution of its alphabet. A \emph{degenerate involution} of length $n$ is just
a $\mu$-involution for some specified composition $\mu$ of $n$.

Extrapolating from the observation that 
the Bruhat order on the symmetric group $S_n$ can be identified with the inclusion order on Schubert varieties in the complete 
flag variety, we define an analogue of Bruhat order on the set of degenerate involutions. Namely, denoting the $B$-orbit
corresponding to the degenerate involution $\pi$ by $\mscr{X}^{\pi}$, we introduce the ordering
\begin{equation}\label{E:bruhat-order-def}
\pi \leq \pi' \mtxt{ if and only if } \mathscr{X}^{\pi} \subseteq \mathscr{X}^{\pi'}.
\end{equation}
One of our main results in this paper is to gain an understanding of the relationship between the induced Bruhat order from
$S_n$ on $\mu$-involutions and the geometric Bruhat order that is defined by the above ordering. 

The next combinatorial object that we introduce is the notion of a \emph{barred permutation}. Barred permutations
parametrize the finitely many torus fixed points of $\mcal{X}_{n}$. Since each $B$-orbit has at most one torus fixed point,
barred permutations can be thought of as certain degenerate involutions and we characterize these degenerate
involutions in \cref{L:TfixedptinBorbit}.

Further, we use these combinatorial objects to study a Bia{\l}ynicki-Birula decomposition of $\mathcal{X}_{n}$. The structure of Bia{\l}ynicki-Birula
cells for spherical varieties is studied in various degrees of generality: for smooth projective spherical 
varieties, by Brion and Luna~\cite{BrionLuna87}; for wonderful compactifications of symmetric varieties, by 
De Concini and Springer~\cite{DS85} and for the particular case of complete quadrics, by Strickland~\cite{Strickland86}.  We
construct two combinatorial maps, $\sigma$ and $\tau$, on the set of all $B$-orbits in a Bia{\l}ynicki-Birula
cell. Given a $B$-orbit associated to a degenerate involution $\pi$,  $\tau(\pi)$ is the degenerate permutation
parametrizing the unique $T$-fixed point in the cell containing the $B$-orbit of $\pi$, see Proposition \ref{prop:quadric-limit}. Conversely, given a 
barred permutation $\gamma$, $\sigma(\gamma)$ provides the degenerate involution corresponding to the $B$-orbit which is dense
in the cell flowing to the $T$-fixed point parametrized by $\gamma$, see Proposition \ref{prop:dense-B-orbit}.

Building on the ideas of Richardson-Springer~\cite{RS90}, Timashev~\cite{Timashev94} and Brion~\cite{Brion98} we study the
Bruhat order described in \cref{E:bruhat-order-def} above. Roughly speaking, the $\WW$-set (resp. the reverse $\WW$-set) of a $B$-orbit $\mscr{Y}$
in $\mcal{X}$, denoted by $\WW(\mscr{Y})$ (resp. $\WW^{-1}(\mscr{Y})$), is the set of Weyl group elements which encode
the saturated chains in the weak-order starting at $\mscr{Y}$ and terminating at the dense $B$-orbit of its
$SL_{n}$-orbit (resp. starting at the closed $B$-orbit its $SL_{n}$-orbit and ending at $\mscr{Y}$). The Richardson-Springer monoid, which is
a natural generalization of the Weyl group, acts on the set of
$B$-orbits contained in $G$-orbit. We denote the action of this monoid by $\star$ below; and $L_{\mu}(\cdot)$ denote the length function on the poset of Bruhat cells
contained in the $SL_{n}$-orbit associated to a composition $\mu$. We have the following theorem.
\begin{thm}[Theorem \ref{thm:Bruhat}]
Let $\pi$ be a $\mu$-involution and $\rho$ be a $\nu$-involution. Then $\rho$ covers $\pi$ in Bruhat order if and only if one of the following holds:
\begin{enumerate}[label={(\roman*)}]
\item $\mu$ is covered by $\nu$ in the refinement ordering (see Definition \ref{E:refinement-order}) and
  $\WW(\pi) \subset \WW(\rho)$.

\item\label{I:T-2} The compositions $\nu = \mu$. Moreover, there exist a simple reflection $s_{\alpha}$  and an element $\varpi \in W$ such that
  \begin{enumerate}
  \item\label{I:T-21}
    $L_{\mu}(\pi) - L_{\mu}(\varpi \star \pi) = L_{\mu}(\rho) - L_{\mu}( \varpi \star \rho)=\ell(\varpi)$;
  \item\label{I:T-22} $s_{\alpha} \cdot (\varpi \cdot \pi) = \varpi \cdot \rho $ (equivalently, $s_{\alpha} \star (\varpi \star \rho) = \varpi \star \pi$);
  \item \label{I:T-23}$ s_{\alpha}\WW^{-1}(\varpi \star \pi) \cap \WW^{-1}(\varpi \star \rho) \neq \emptyset$ where
    $s_{\alpha}\WW^{-1}(\varpi \star \pi)$ is the translation by group action i.e.  $\{ s_{\alpha} w \in W | w \in
    \WW^{-1}(\varpi \star \pi) \}$.
  \end{enumerate}
\end{enumerate}
\end{thm}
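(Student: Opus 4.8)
The plan is to translate the geometric covering relation $\mathscr{X}^{\pi} \subseteq \overline{\mathscr{X}^{\rho}}$ into combinatorics in two stages, according to whether $\pi$ and $\rho$ lie in the same $SL_{n}$-orbit. The starting observation is that the set of $B$-orbits is stratified by the $SL_{n}$-orbits $\mathcal{O}^{\mu}$, whose closure order is exactly the refinement order on compositions; since $\overline{\mathcal{O}^{\nu}}$ is a union of $SL_{n}$-orbits and each $SL_{n}$-orbit is in turn a union of $B$-orbits, any cover of $\pi$ by $\rho$ forces $\overline{\mathcal{O}^{\mu}} \subseteq \overline{\mathcal{O}^{\nu}}$, and moreover the two $SL_{n}$-orbits must either coincide or be adjacent in the refinement order, for otherwise an intermediate $SL_{n}$-orbit would supply an intermediate $B$-orbit and destroy the covering. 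This yields the dichotomy between case (i), where $\mu$ is covered by $\nu$, and case (ii), where $\mu = \nu$, and I would isolate these two regimes at the outset.

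For case (i), with $\mu$ covered by $\nu$, I would show that closure containment across the boundary between the two adjacent $SL_{n}$-orbits is governed precisely by inclusion of $\WW$-sets. The set $\WW(\pi)$ records the reduced words labelling saturated weak-order chains from $\mathscr{X}^{\pi}$ up to the dense $B$-orbit of $\mathcal{O}^{\mu}$, which is exactly the data describing how $\overline{\mathscr{X}^{\rho}}$ meets the lower stratum $\mathcal{O}^{\mu}$: a $B$-orbit $\mathscr{X}^{\pi}$ of $\mathcal{O}^{\mu}$ lies in $\overline{\mathscr{X}^{\rho}}$ if and only if every descending chain from $\rho$ is compatible with one descending from $\pi$, encoded by $\WW(\pi) \subseteq \WW(\rho)$. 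I would deduce this from the $\WW$-set description of $B$-orbit closures in the symmetric-variety setting of \cite{RS90, Brion98}, adapted to the stratification of $\mathcal{X}_{n}$, and then check that when $\mu$ is covered by $\nu$ the strict inclusion $\WW(\pi) \subset \WW(\rho)$ is equivalent to a covering, i.e.\ to the absence of an intermediate $B$-orbit.

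Case (ii), where $\pi$ and $\rho$ lie in the same $SL_{n}$-orbit $\mathcal{O}^{\mu}$, is where I would invoke the Richardson--Springer theory of $B$-orbits on the symmetric space, in which the closure (Bruhat) order is strictly finer than the weak order generated by the monoid action $\star$. The strategy is to use the Weyl group element $\varpi$ to flow both orbits down the weak order into a common reference position: condition \ref{I:T-21} asserts that $\varpi$ realizes the maximal such descent, of length $\ell(\varpi)$, simultaneously for $\pi$ and $\rho$, placing $\varpi \star \pi$ and $\varpi \star \rho$ at the bottom of their weak-order intervals. Condition \ref{I:T-22} then says that in this reduced frame the cover is realized by a single simple reflection $s_{\alpha}$ acting through the group action, and condition \ref{I:T-23}, the nonempty intersection $s_{\alpha}\WW^{-1}(\varpi \star \pi) \cap \WW^{-1}(\varpi \star \rho)$, is the reverse-$\WW$-set compatibility that distinguishes a genuine Bruhat covering from a mere comparability, by guaranteeing that the chains rising from the closed $B$-orbit to the two orbits differ by exactly one reflection. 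Each implication I would prove by combining Brion's description \cite{Brion98} of $B$-orbit closures inside a single $G$-orbit with the monoid relations of \cite{RS90} and the length bookkeeping of Timashev \cite{Timashev94}.

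The main obstacle I anticipate is case (ii), and specifically the sufficiency of the conditions \ref{I:T-21}--\ref{I:T-23}. Because the Bruhat order within a single $G$-orbit is not generated by the monoid action alone, a covering cannot be read off from the weak order; one must combine the descent through $\varpi$, the single reflection $s_{\alpha}$, and the reverse-$\WW$-set intersection, and then show that these three pieces of data together force $\mathscr{X}^{\pi}$ to be a codimension-one stratum of $\overline{\mathscr{X}^{\rho}}$ with nothing in between. Verifying that condition \ref{I:T-23} excludes all intermediate $B$-orbits, rather than merely certifying comparability, is the crux; I would handle it by an induction on $\ell(\varpi)$ together with the exchange-type behavior of $\WW$-sets under multiplication by a simple reflection.
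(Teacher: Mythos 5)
Your overall architecture — splitting into the cross-stratum case ($\mu$ covered by $\nu$) and the equal-stratum case ($\mu=\nu$), handling the former via Brion's description of $\overline{\mscr{X}^{\rho}}\cap\mcal{X}^{\mu}$ in terms of $\WW$-sets, and observing that an intermediate $G$-orbit would produce an intermediate $B$-orbit — matches the paper. Case (i) in particular is treated the same way: the paper derives it from Lemma~\ref{T:W-setcriterion}, which is exactly the $\WW$-set closure criterion you invoke.

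The gap is in case (ii), and you have correctly located it yourself: the sufficiency (and in fact also the necessity) of conditions \ref{I:T-21}--\ref{I:T-23} does not follow from ``Brion plus Timashev plus length bookkeeping'' applied directly, and the induction on $\ell(\varpi)$ you sketch has nothing concrete to recurse on. What is missing is the intermediate construction the paper builds for precisely this purpose: the \emph{reverse Bruhat order} $\lessdot_{r}$ (Definition~\ref{D:recursive}), generated by running Timashev's recursion downward from opposite weak covers via the $\star$-action, together with Theorem~\ref{T:reverseBruhat} asserting that $\lessdot_{r}$ coincides with the opposite of the geometric Bruhat order. That theorem is where the induction on rank and the RS-monoid exchange property (\cite[Property 5.12(e)]{RS90}) actually live — it is applied to a reduced word $s_{j_1}\cdots s_{j_k}$ for an element of $\WW(\pi)$ concatenated with the Timashev descent word $s_{i_m}\cdots s_{i_1}$, and the exchange property forces the deleted letter into the $\WW(\pi)$-part, which is what drives the recursion. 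Once that equivalence is in hand, the paper extracts Corollary~\ref{lem:w1}: $\pi\lessdot\rho$ iff every $\varpi\in\WW^{-1}(\pi)$ factors as $w_1w_2$ with $\ell(w_1w_2)=\ell(w_1)+\ell(w_2)$ and $w_1s_{\alpha}w_2\in\WW^{-1}(\rho)$. Conditions \ref{I:T-21}--\ref{I:T-23} are then a direct repackaging of this factorization (set $\varpi=w_1$, note $s_{\alpha}w_2$ witnesses the nonempty intersection in \ref{I:T-23}), and the converse is the reassembly $w_1w_2\in\WW^{-1}(\pi)$, $w_1s_{\alpha}w_2\in\WW^{-1}(\rho)$. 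Without the reverse-order theorem and the factorization corollary, your plan asserts what must be shown — that the three conditions exclude intermediate orbits — but supplies no mechanism for it.

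One smaller point: in case (i) you claim that $\WW(\pi)\subset\WW(\rho)$ is ``equivalent to the absence of an intermediate $B$-orbit.'' As stated, Lemma~\ref{T:W-setcriterion} only gives comparability $\pi\le\rho$ from $\WW$-set containment; that the relation is automatically a \emph{cover} when $\mu$ is covered by $\nu$ needs the codimension-one statement for the intersection $\overline{\mscr{X}^{\rho}}\cap\mcal{X}^{\mu}$ coming from \eqref{E:brion-str-res}. You should make that dimension count explicit rather than folding it into the $\WW$-set condition.
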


When $\mu = (1,1,\dots,1)$, the degenerate involutions are identified 
with permutations and when $\mu = (n)$, then the degenerate involutions are identified with ordinary involutions
\cite{RS94}. In both these extreme cases, the restriction of the Bruhat order to $\mu$-involutions coincides with the opposite of the usual 
Bruhat order on permutations. However, we show that the same property does not hold for all $\mu$ in general.

We now describe the contents of the paper. In Section \ref{S:Preliminaries} we introduce notations and basic
constructions that are used freely throughout the paper.  In Section \ref{S:orbits} we introduce $\mu$-involutions that
provide the enumerate $B$-orbits of $\mcal{X}_{n}$. We also discuss the length functions on the Bruhat poset and action of
Richardson-Springer monoid on the poset of $B$-orbits.
In the subsequent section, Section \ref{S:Barredpermutations}, we introduce barred permutations and their basic
properties. One of our initial goals was to use Brion's presentation of equivariant Chow rings and equivariant formality
of smooth projective varieties to deduce a presentation of the cohomology ring of complete
quadrics. It has been studied using intricate geometric arguments in \cite{DGMP88}. We could only achieve partial
success and our results are described in \cref{SS:coh}. The proof of \cref{thm:Bruhat} is presented in 
\cref{S:Bruhat}. Finally in \cref{S:cells-and-bars}, we study the Bia{\l}nicki-Birula decomposition of $\mcal{X}_{n}$.

\subsection*{Acknowledgment}

The first author was supported by a postdoctoral fellowship funded by the Skirball Foundation via the Center for Advanced
Studies in Mathematics at Ben-Gurion University of the Negev during the preparation of this work.  

%%% Local Variables:
%%% mode: latex
%%% TeX-master: "MC-0"
%%% End:

%                 +++ Ending file MC-0-intro.tex +++   
%------------------------------------------------------------------------------------------------------------------------------------------------

%                 +++ Starting file MC-0-sec2.tex +++   

 % Prelim - Section 2    [[[
\section{Preliminaries}\label{S:Preliminaries}
In this section we will introduce the notation that will be used throughout this paper. We also recall some background
material along the way. 

\subsection*{Notations}
We will use the structure theory of the algebraic group $SL_{n}$ over a field $k$. The  
 Borel subgroup $B$ will be the subgroup of upper triangular matrices, the maximal torus $T$ the subgroup of
diagonal matrices and the Weyl group $W=$ the group of permutation matrices $S_{n}$. The corresponding root system will
be denoted by $\Phi$, positive (resp. negative roots) by $\Phi^{+}$ (resp., $\Phi^{-}$) and the simple roots by
$\Delta$. Given any standard parabolic $P$ (resp. the unipotent radical $U_{P}$ ) the opposite parabolic will be denoted
by $P^{-}$ (resp. unipotent radical of $P^{-}$ will be denoted by $U^{-}_{P}$). The Lie algebras will
be denoted by Gothic fonts e.g., $\mfrk{t}, \mfrk{sl}_{2}$
etc.

We work over a base field $k$. The construction of wonderful compactification is known in all characteristics. The
discussion in \S \ref{SS:coh} and \S \ref{S:cells-and-bars} requires that $ \chr(k) = 0$.

A permutation $\sigma \in S_{n}$ will be represented interchangeably using the \emph{cycle} notation
and the \emph{one-line} notation. For example let $n =5$ and $\sigma \in S_{5}$ be the permutation that interchanges $3$ and $5$ but leaves the other items
unchanged. In cycle notation $\sigma$ will be denoted by $(3,5)$ or (equivalently $(1)(2)(3,5)(4)$) and in one-line notation it will
be denoted by $[12543]$. An \emph{alphabet} for a permutation $\sigma \in S_{n}$ is any ordered subset of natural
numbers, with its natural order, of cardinality $n$ on which
$\sigma$ acts; for example consider alphabets $(1,2,3)$ and $(2,4,8)$ for $S_{3}$, then in one line notation the permutation $[132]$ and $[284]$ are equivalent.     

Finite posets will play an important role in this paper. We recommend the Chapter 3 of \cite{stanley} as a reference. A poset $P$ is \emph{graded} if every maximal chain in $P$ has the same length. A \emph{rank function} on a poset is a function $\mt{rk}: P\rarr \Z_{\geq 0}$ which maps any element $x\in P$ the length a maximal chain from the minimal element to $x$. The \emph{rank} of a graded poset $P$, denoted by $\mt{rk}(P)$, is defined to be the rank of the maximal element.
The posets that we will study in this paper arise from the following geometric situation: a solvable group $B$ acts on a
projective variety with finitely many orbits, and the poset on the set of orbits is generated by the inclusion order on
the closures. It is true that in general such posets are always graded with the minor caveat that there may be more than one minimal 
element, see Exercise 8.9.12 of \cite{RennerBook}. This will not be an issue for the posets we consider in this paper
and hence we will completely ignore it. 

The Bruhat-Chevalley (BC) ordering on symmetric groups (or more generally any Coxeter group) is well known, see
\cite[Chapter 2]{bjrbrn}. The rank function for this poset structure is  
called the \emph{length function} and is denoted by $\ell(-)$. The length of a permutation $\sigma \in S_{n}$ is defined by 
\[ \ell(\sigma) \defeq \mtxt{number of inversion of } \sigma \] where an \emph{inversion} is an ordered pair $(i,j)$
such that $1 \leq i < j \leq n$ and $\sigma(i) > \sigma(j)$. Geometrically this is related to the poset structure of the
$B$-orbits in  the flag variety $SL_{n}/B$.  More precisely, the $B$-orbits are indexed by $ S_{n}$, and we say two
orbits $\mscr{O}^{\sigma} \leq \mscr{O}^{\tau}$ if and only if $\mscr{O}^{\tau}$ is in the topological closure of $\mscr{O}^{\sigma}$. This poset structure (which exists more generally for any spherical
variety) is called the Bruhat ordering. The BC ordering is opposite of the Bruhat ordering \footnote{The confusing nomenclature is deeply entrenched in the
  literature. We will use BC ordering and Bruhat ordering to avoid confusion.}. For general Weyl
groups the \emph{Bruhat decomposition} theorem relates the BC ordering and the Bruhat ordering. 

We consider another case. An involution is an element of $S_n$ of order $\leq 2$. We denote by $\mathcal{I}_n$ the set of involutions in
$S_n$. The restriction of the BC ordering on $S_n$ induces an ordering on $\mcal{I}_{n}$. We call this the BC ordering
on involutions $\mcal{I}_{n}$. This ordering is graded, but somewhat surprisingly, with a different rank function. The rank function, discovered by  
Incitti (see Theorem 5.2 \cite{Incitti04}), is explicitly given by
\begin{equation}\label{eqn:invlength}
      L(\pi) \defeq  \frac{\ell(\pi)+\mt{exc}(\pi)}{2}, \; \mtxt{ for } \pi \in \mcal{I}_{n},
\end{equation}
where $\mt{exc}(w)$ is the {\em exceedance} of $w \in S_n$. It is defined by
\[
\mt{exc}(w) := \# \{ i \in [n] :\ w(i) > i \}.
\]
The exceedance of an involution is the number of 2-cycles that appear in its cycle decomposition.

Suppose an element $\pi \in \mcal{I}_{n}$ has a a cycle decomposition 
\[\underbrace{(a_1,b_1), \cdots, (a_k,
b_k)}_{\mtxt{two-cycles}}\overbrace{c_1, \cdots, c_m}^{\mtxt{one-cycles}}.\] We  associate to $\pi$ a quadric hypersurface in $\mbb{P}^{n-1}$ given by   
\[Q_{\pi} := x_{a_1} x_{b_1} + \cdots + x_{a_k} x_{b_k} + x_{c_1}^2 + \cdots + x_{c_m}^2.\]

Let $V \defeq k^{n}$ denote the standard representation of $SL_{n}$ and $V^{\vee}$ denote the dual space. The collection
of all quadric hyper-surfaces in $V$ is identified
with the representation $\Sym^{2}(V^\vee)$. One can study the Bruhat order induced on the Borel orbits of
$\mbb{P}(\Sym^{2}(V^{\vee}))$. It follows from the work of Richardson and Springer (see \cite{RS94}) that 
the BC ordering on $\mcal{I}_{n}$ is again the opposite of the Bruhat order. In particular, the function given by
\cref{eqn:invlength} becomes the co-rank function of the Bruhat order. 

\begin{comment}
\begin{rem}\label{R:atmost3}
The ordering we follow is the \emph{opposite} of the ordering followed by Incitti. In our convention, the proof of Theorem 5.2 of \cite{Incitti04}  shows that that for two involutions $\pi,\pi'$ in  $\mc{I}_n$ if $\pi'$ covers $\pi$, then $\ell(\pi) - \ell(\pi') \leq 3$.
\end{rem}
\end{comment}

\subsection{The variety of complete quadrics}\label{Ss:compq}

Let us assume $n \geq 3$ for simplicity. The variety of complete quadrics, denoted by $\mcal{X}_{n}$ has a long and rich history. To
the best of our knowledge, there are three independent ways to construct this variety. The first two are
algebro-geometric in nature, and the third one is representation theoretic. Roughly speaking, the algebro-geometric
method starts with a simple variety and then repeatedly applies geometric constructions (blow-ups or taking Zariski closure) on this initial
variety (see-below) to arrive at $\mcal{X}_{n}$. On the other hand the representation theoretic construction presents
the variety as a subvariety of a projective space of much bigger dimension. We will use the
representation theoretic construction. We briefly recall the geometric construction for its historical significance.

\subsubsection{Algebro-geometric construction} \label{SS:agq}
Let us denote by $\mcal{X}^{0}$ the space of isomorphism classes of symmetric, non-degenerate quadratic forms on the
$n$-dimensional affine space $k^{n}$ or equivalently non-singular quadric hypersurface in
$\mbb{P}^{n-1}_{k}$. Let $\mbb{P}^{N}$ denote $\mbb{P}(\Hom_{k}(k^{n},k^{n}))$ and identifying a quadratic form with the
associated matrix we have a natural embedding of $\mcal{X}^{0} \subset \mbb{P}^{N}$. Given any quadric $\Q \in
\mcal{X}^{0}$ and any integer $i \leq n-1$ one can define an incidence variety
$\Gamma_{Q} \subset \mbb{P}^{n-1} \times_{k} \gr(i,n)$, where $\gr(i,n)$ is the Grassmanian of $i$-dimensional sub-spaces
of $k^{n}$, called the variety of $i$-dimensional tangent spaces to $Q$. Let $\mathbf{Gr} \defeq
\prod_{i=1}^{n-1}\gr(i,n)$. Then we have constructed a double fibration
\begin{equation}
  \begin{tikzcd}
 &  \mbb{P}^{N} \times_{k} \mathbf{Gr} \arrow[ld, "\pi_{1}"] \arrow[rd, "\pi_{2}"] &  \\
   \mcal{X}^{0} \subset \mbb{P}^{N} &   & \mathbf{Gr} 
  \end{tikzcd}
\end{equation}
 and the variety of complete quadrics is defined as the image $\overline{\pi_{1}^{-1}(\mcal{X}^{0})}$. This construction
 was realized by Tyrrell using \emph{higher adjugates}, see \cite{klth}.

The second construction, due to Vainsencher, starts with $\mcal{X}^{0} \subset \mbb{P}^{N}$ and then realizes
$\mcal{X}_{n}$ as a transform of iterated successive blowups of $\mbb{P}^{N}$ with cleverly chosen centers. This
rather intricate construction has been generalized by many authors; we recommend the article \cite{klth} for a
comprehensive overview and detailed proofs.
 
\subsubsection{Representation-theoretic construction}\label{SS:repthr}

The representation-theoretic construction of $\mcal{X}_{n}$ is a consequence of the more general construction of
\emph{wonderful compactifications} of De Concini and Procesi. In literature, this construction is presented in an
abstract way which handles all Lie-group types uniformly; see \cite{DP83, sprdec, falt}. We will recall the important
parts of this construction for complete quadrics and fix a specific model (all models are $G$-equivariantly
isomorphic). We will closely follow the notation of \cite{sprdec}.

We set $G = SL_{n}(k)$ and an involution $\theta$ on $G$ given by $\theta(\tau) =
(\tau^{-1})^{\mathbf{t}}$, where $\mbf{t}$ denotes the transpose of a matrix. The fixed points $G^{\theta}$ is identified with $SO(n)$. The map $G/G^{\theta} \rarr
\mcal{X}^{0}$ taking $\tau \mapsto \tau \cdot \tau^{\mathbf{t}}$ connects the homogeneous space $G/G^{\theta}$ and the
space $\mcal{X}^{0}$ in \S \ref{SS:agq} above. The space $\mcal{X}_{n}$ is then obtained as an wonderful compactification of
the space $G/G^{\theta}$.

The key point, in the construction of $\mcal{X}_{n}$, is that one can single out a (non-empty!) class of finite dimensional
algebraic representations $\{\mbb{V}_{i} \}$ with the property that there is a non-zero vector
$v_{i} \in \mbb{V}_{i}$ such that the closure of the orbit $G \cdot v_{i} \subset
\mbb{P}(\mbb{V}_{i})$, where $(\stab_{G}(v_{i}) = G^{\theta})$. By a model, we mean fixing such a representation
$\mbb{V}_{i}$ and a spherical vector
$v_{i}$.

Concretely, given $G$ and the involution $\theta$ as above, let $T$ denote the standard maximal torus of $G$ consisting of the diagonal matrices, $B$ denote the upper
triangular matrices; clearly $\theta(B) = B^{-}$. Let $\Phi$ denote the roots of $G$ (with respect to this choice
of Borel subgroup $B$ and $T$), and
$\Delta = \{ \alpha_{1}, \ldots \alpha_{n-1}\}$ the standard simple roots of $\Phi$. Let $W = S_{n}$ denote the Weyl group of $G$.

We let $k^{n}$ denote the standard representation of $G$ and consider $V\defeq \oplus_{i=0}^{n} \Lambda^{i}(k^{n})$ with
the induced representation of $G$. We let $^{\theta}V$ denote the representation of $G$ on $V$ twisted by the automorphism $\theta$. Consider the $k$-vector space $\mbb{V} \defeq
\Hom_{k}(^{\theta}V, V)$ as a representation of $G$. Let $h \in \mbb{V}$ denote the the identity map. We note that $h$
is a spherical vector (invariant under $G^{\theta}$ action). The wonderful compactification $\mcal{X}_{n}$ is
the closure of the $G$-orbit $G \cdot [h]$ in $\mbb{P}(\mbb{V})$. The key properties of the wonderful compactification,
outlined in Theorem \ref{T:VDP} below, rests on the following crucial observation. 

The highest weight vector in $\mbb{V}$ has weight $\rho = \sum_{i=1}^{n-1} 2\alpha_{i}$ and let $pr_{\rho}:
\mbb{P}(\mbb{V}) \rarr k$ denote the projection onto the line spanned by the highest weight vector. The non-vanishing
locus $\mcal{X}_{n} \cap \{ pr_{\rho} \neq 0 \}$ is the affine space $U^{-} \times_{k} \overline{T\cdot h}$ and the toric
variety $\overline{T\cdot h}$ is equivariantly isomorphic to the $(n-1)$ affine space $\mbb{A}^{n-1}$ with $T$ action
given by $t \cdot(v_{1}, \ldots v_{n-1}) = (t^{-2\alpha_{1}}\cdot v_{1}, \ldots, t^{-2\alpha_{n-1}}\cdot v_{n-1})$, and
under this identification the vector $(1, \ldots,1))$ corresponds to $h$. All 
$G$ orbits closures in $\mcal{X}_{n}$ intersect $\overline{T \cdot h}$ along a $T$-stratum in $\mbb{A}^{n-1}$.

More precisely, the $G$-orbit closures in $\mcal{X}_{n}$ are in one-to-one correspondence with subsets of $\Delta$; for any subset $S \subset \Delta$ the corresponding $G$-orbit closure $\mcal{X}^{S}$ fibers over the partial flag
variety $G/P_{S}$ ($P_{S}$ = standard parabolic containing $B$ corresponding to $S$). The $G$-orbit closure
$\mcal{X}^{S}$ intersects $\mbb{A}^{n-1}$ along the toric stratum
\begin{align}\label{E:toric-strata}
\mbb{A}_{S} = \{ (x_{1}, \ldots,x_{n-1}) \in \mbb{A}^{n-1} : x_{i} = 0 \mtxt{ for } i \notin S \}.
\end{align}

Summarizing, the main features of the wonderful compactification $\mcal{X}_{n}$ are outlined below. 
\begin{thm} [See \cite{sprdec, DP83} ] \label{T:VDP}
      The variety $\mathcal{X}_n$ has the following properties. 
 \begin{enumerate} [label={(\roman*i)}]
       \item $\mcal{X}_{n}$ is smooth and projective. 
       \item The complement $\mcal{X}_{n} \setminus \mcal{X}^{0}$ is a union of smooth normal crossing divisors
         $\mcal{X}^{i}$, where $i$ varies over the subsets of $\Delta$, and any $G$-orbit closure $\mcal{X}^{S} \defeq
         \cap_{i \in S} \mcal{X}^{i}$ where $S\subset \Delta$  fibers over the partial flag variety $G/P_{S}$. Note
$P_{S}$ is uniquely determined by $S$ and the requirement $B \subset P_{S}$.

\item The variety $\mcal{X}_{n}$ is uniquely determined by a unique $G$-equivariant isomorphism. 
\end{enumerate}
\end{thm}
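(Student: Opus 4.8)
The plan is to derive the three assertions from the local structure of $\mcal{X}_n$ around its open orbit, following De Concini and Procesi. The single substantive input is the big-cell description recorded above: the chart $\mcal{X}_n \cap \{pr_{\rho} \neq 0\}$ is $G$-equivariantly $U^- \times \overline{T\cdot h}$, with $\overline{T\cdot h} \cong \mbb{A}^{n-1}$ carrying the torus action $t\cdot(v_1,\dots,v_{n-1}) = (t^{-2\alpha_1}v_1,\dots,t^{-2\alpha_{n-1}}v_{n-1})$ and $h \leftrightarrow (1,\dots,1)$. If I were to prove this rather than cite it, the genuinely nontrivial step is the weight bookkeeping in $\mbb{V} = \Hom_k({}^{\theta}V, V)$ with $V = \oplus_i \Lambda^i(k^n)$ that pins down the slice $\overline{T\cdot h}$ as exactly this affine space with exactly this $T$-action, together with the verification that the orbit map $U^- \times \overline{T\cdot h} \to \mcal{X}_n$ is an open immersion (equivalently, that $U^-$ meets the isotropy trivially and the differential at $h$ is an isomorphism). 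Everything else propagates from this chart by the $G$-action.

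Granting the big cell, smoothness and projectivity are immediate. The chart $U^- \times \mbb{A}^{n-1}$ is an affine space, hence smooth; its $G$-translates cover $\mcal{X}_n$ because $G\cdot[h]$ is dense and $G$ acts by automorphisms, so $\mcal{X}_n$ is smooth. Projectivity holds by construction, since $\mcal{X}_n$ is a closed subvariety of $\mbb{P}(\mbb{V})$.

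For the boundary structure I would read the stratification off the toric slice. The $T$-orbit closures in $\mbb{A}^{n-1}$ are the coordinate subspaces $\mbb{A}_S$ of \eqref{E:toric-strata}, and the hyperplanes $\{x_i = 0\}$ extend to globally defined divisors $\mcal{X}^i$. In the smooth chart these are $n-1$ distinct coordinate hyperplanes of an affine space, so they cross normally; since normal crossing is a local, $G$-invariant condition and the translates cover $\mcal{X}_n$, the family $\{\mcal{X}^i\}$ is a normal-crossing divisor globally. Taking $G$-saturations of the $\mbb{A}_S$ identifies the $G$-orbit closures with the intersections $\mcal{X}^S = \cap_{i\in S}\mcal{X}^i$. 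The fibration $\mcal{X}^S \to G/P_S$ I would obtain by computing the isotropy of a generic point of the corresponding stratum and showing that the standard parabolic $P_S$ stabilizes the relevant chart; the induced $G$-equivariant map to $G/P_S$ is then a fiber bundle whose fiber is the lower-dimensional wonderful compactification attached to the Levi of $P_S$.

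Finally, uniqueness up to a unique $G$-equivariant isomorphism I would deduce from the universal property of the wonderful compactification: any smooth complete $G$-variety containing $G/G^{\theta}$ as a dense orbit, with a unique closed orbit and boundary a normal-crossing divisor indexed as above, is canonically isomorphic to $\mcal{X}_n$, since such a variety is determined by its open orbit together with the combinatorial data of $\Delta$. The main obstacle throughout is the weight computation establishing the big cell; once it is in hand, smoothness, projectivity, the normal-crossing divisor structure, the description of orbit closures, and the parabolic fibrations are all formal consequences of the local structure together with the density of the open orbit.
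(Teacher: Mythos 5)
The paper does not actually prove this theorem: it is quoted directly from \cite{DP83} and \cite{sprdec}, with the big-cell description $U^{-}\times\overline{T\cdot h}$ recorded beforehand as the key input, and your sketch reconstructs exactly that standard De Concini--Procesi argument, so it is essentially the same approach. The only soft spot is the claim that the $G$-translates of the big cell cover $\mcal{X}_{n}$ ``because $G\cdot[h]$ is dense'': density alone does not give this --- the correct argument is that the complement of the union of translates is closed and $G$-stable, hence if nonempty contains the unique closed orbit, which however already meets the big cell at the origin of the toric slice $\overline{T\cdot h}\cong\mbb{A}^{n-1}$.
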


\begin{rem} \label{R:Fibstr}
A remarkable (wonderful!) aspect of the construction of $\mcal{X}_{n}$ is the following. Given any subset $S \subset \Delta$, the corresponding
$G$-orbit closure $\mcal{X}^{S}$ and the dense open $G$-orbit $ \mcal{O}^{S}$ in $\mcal{X}^{S}$ fit into a diagram
             \begin{equation} \label{E:Fibr}
                   \begin{tikzcd}
                         \mcal{O}^{S} \arrow[r, "j^{S}"] \arrow[rd] & \mcal{X}^{S} \arrow[d, "\pi_{S}"] \\
                          & G/P_{S}.
                   \end{tikzcd}
             \end{equation}

Let $L_{S}^{ss}$ denote the semi-simplification of Levi-component of $P_{S}$ containing $T$. We have $L_{S}^{ss} = \prod SL_{m_{i}}\;
(\sum m_{i} = n )$ and each the involution $\theta$ on $SL_n$ induces the same involution on each $SL_{m_{i}}$. Let
$\mbb{O}^{S}$ denote the direct product of $SL_{m_{i}}$-homogeneous spaces $\prod_{k}SL_{m_{i}}/SO_{m_{i}}$ and
$\mbb{X}^{S}$ denote the direct product of wonderful compactifications $\prod_{k}\mcal{X}_{m_{i}}$. Then, after
extending the component-wise $L_{S}^{ss}$ actions trivially to unipotent radical of $P_{S}$, we get $P_{S}$ actions on
$\mbb{O}^{S}$ (resp $\mbb{X}^{S}$).  We have unique $G$-equivariant isomorphisms $\mcal{O}^{S} = G \times_{P_{S}}
\mbb{O}^{S}$ and $\mcal{X}^{S} = G \times_{P_{S}}\mbb{X}^{S}$. This is very useful for certain inductive arguments. 

\end{rem}

\begin{rem}\label{R:ComplQ}
   
  In the light of the previous remark, a point in $\mcal{X}_{n}$ can be intuitively thought of a pair $(\mcal{F}^{\bullet},
Q_{\mcal{F}^{\bullet}} )$. Where $\mcal{F}^{\bullet}= \{ V_{0}\subset \ldots \subset V_{k}\}$ is a partial flag variety of $k^{n}$ and
$Q_{\mcal{F}^{\bullet}}$ is a collection of non-degenerate quadric hypersurface in the successive (projective) sub-quotients
$V_{i+1}/V_{i}$. Sometimes, when the flag is clear from the context, we will loosely say $Q_{\pi}$ is a complete
quadric. 
\end{rem}

% BB-STUFF [[[
\subsection{Bia{\l}ynicki-Birula decomposition}\label{S:bbdecomp}

The Bia{\l}ynicki-Birula decomposition (BB decomposition for short) is an important tool for studying algebraic actions
of torus on projective algebraic varieties. We recall a version of the BB-decomposition theorem, which will be
sufficient for our requirements. 

\begin{thm}[Theorem 4.3 \cite{BB73}] \label{T:BB}
      Suppose $X$ is a smooth connected complete variety with an algebraic action of the torus $\mbb{G}_{m}$. Suppose $X$ has
      finitely many torus fixed points $\left\{ x_{1}, \ldots , x_{r}\right\}$.  Then there exists locally closed (in the
      Zariski topology) $\mbb{G}_{m}$-invariant subschemes $X_{i}^{+}$ satisfying the following properties.

      \begin{itemize}[label={$\circ$}]
    
\item The schemes  $X_{i}^{+}$ partition $X$, i.e. $X = \cup_{i=1}^{r} X_{i}^{+}$ and $X^{+}_{i} \cap X_{j}^{+} = \emptyset$. 

\item The subschemes $ X_{i}^{+}$ are (locally closed) affine spaces and for each index $i$, we have  $x_{i} \in X_{i}^{+}$. 
      \end{itemize}
\end{thm}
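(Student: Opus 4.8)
The plan is to realize each $X_i^+$ as the \emph{attracting set} of the fixed point $x_i$ and then to analyze it through the linearized action on the tangent space. First I would use completeness of $X$ to make sense of limits: for a point $x \in X$ the orbit map $\gamma_x \colon \mbb{G}_{m} \to X$, $t \mapsto t\cdot x$, is a morphism from the smooth curve $\mbb{G}_{m}$, so by the valuative criterion of properness (equivalently, because a rational map from a smooth curve to a proper variety extends) it prolongs uniquely to $\bar\gamma_x \colon \mbb{A}^{1} \to X$, and I would set $\lim_{t\to 0} t\cdot x \defeq \bar\gamma_x(0)$. This limit is visibly a $\mbb{G}_{m}$-fixed point, hence equals some $x_i$. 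Defining
\[
X_i^+ \defeq \{ x \in X : \lim_{t\to 0} t\cdot x = x_i \},
\]
one immediately obtains a $\mbb{G}_{m}$-invariant set-theoretic partition $X = \bigsqcup_{i} X_i^+$ with $x_i \in X_i^+$. The real content of the theorem is that each $X_i^+$ is a locally closed subscheme isomorphic to an affine space.

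Next I would carry out the local analysis at a fixed point $x_i$. Smoothness gives a linear $\mbb{G}_{m}$-action on the tangent space $T_{x_i}X$, which decomposes into weight spaces $T_{x_i}X = \bigoplus_{n\in\Z} V_n$. Because $x_i$ is an \emph{isolated} fixed point, the zero-weight space $V_0 = (T_{x_i}X)^{\mbb{G}_{m}}$ vanishes, so $T_{x_i}X = T^+ \oplus T^-$ with $T^+ = \bigoplus_{n>0} V_n$ and $T^- = \bigoplus_{n<0} V_n$. For the \emph{linearized} action on $T_{x_i}X$, the attracting set of the origin is exactly $T^+$, since $t$ acts on $V_n$ by $t^n$ and $t^n v \to 0$ as $t \to 0$ precisely when $n>0$; this attracting set is an affine space of dimension $d_i \defeq \dim T^+$. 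To transport this picture to $X$ I would invoke an equivariant local model: by Sumihiro's theorem $x_i$ has a $\mbb{G}_{m}$-stable quasi-affine neighborhood, and on a smooth variety the action can be equivariantly linearized near the fixed point (via an equivariant étale slice, or a formal/analytic linearization in characteristic zero, which is the setting relevant to \S\ref{S:cells-and-bars}). This identifies a neighborhood of $x_i$ in $X_i^+$ with a neighborhood of $0$ in $T^+$, proving that $X_i^+$ is smooth of dimension $d_i$ and locally closed near $x_i$.

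Finally --- and this is where the main work lies --- I would upgrade the local statement to the assertion that $X_i^+$ is \emph{globally} an affine space. The key is that the $\mbb{G}_{m}$-action contracts all of $X_i^+$ onto $x_i$, so every point flows into any prescribed neighborhood of $x_i$ as $t \to 0$. I would use this contraction to propagate the local chart along the flow, showing that the local isomorphism between a neighborhood of $x_i$ in $X_i^+$ and a neighborhood of $0$ in $T^+$ extends to a $\mbb{G}_{m}$-equivariant isomorphism $X_i^+ \cong T^+ \cong \mbb{A}^{d_i}$, with the overlaps controlled by equivariance. The delicate points --- verifying that $X_i^+$ is locally closed as a \emph{subscheme} rather than merely a constructible subset, and that the cell carries the reduced affine-space structure --- are precisely what Bia{\l}ynicki-Birula handle, and they constitute the principal obstacle; by contrast the weight-space bookkeeping and the existence of limits are comparatively routine.
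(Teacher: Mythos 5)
This theorem is not proved in the paper at all: it is quoted as background, with the proof deferred entirely to the cited reference [BB73], so there is no internal argument to compare yours against. Judged on its own terms, your sketch follows the standard strategy for the Bia{\l}ynicki-Birula theorem --- define $X_i^+$ as the attracting set of $x_i$, use completeness and the valuative criterion to produce limits, and use the vanishing of the zero-weight space at an isolated fixed point of a smooth variety to predict that the cell has dimension $\dim T^+$ --- and those parts are sound.

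The gap is the middle step. In the Zariski topology a $\mathbb{G}_m$-action on a smooth variety cannot in general be linearized in a neighborhood of a fixed point, and an \'etale or formal linearization does not obviously transport the attracting set back to $X$, since membership in $X_i^+$ is a condition on the whole orbit and not on a formal germ; so the sentence ``this identifies a neighborhood of $x_i$ in $X_i^+$ with a neighborhood of $0$ in $T^+$'' is not justified as written. The actual argument replaces linearization by an equivariant affine covering (a Sumihiro-type theorem): choose a $\mathbb{G}_m$-stable affine open $U \ni x_i$. Invariance of $U$ then disposes of the globalization you flag as the main difficulty: if $x \in X_i^+$ then $t_0\cdot x \in U$ for some $t_0$ (because the limit lies in $U$ and $U$ is open), hence $x = t_0^{-1}\cdot(t_0\cdot x) \in U$; so $X_i^+$ is contained in $U$ outright and no propagation of charts along the flow is needed. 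Inside $U = \operatorname{Spec} A$ one decomposes $A = \oplus_n A_n$ by weight; the locus of points whose limit exists in $U$ is the closed subscheme cut out by the ideal generated by the functions of negative weight, which yields local closedness of $X_i^+$ as a subscheme of $X$, and a graded argument using smoothness at $x_i$ (graded Nakayama applied to the maximal ideal of $x_i$) identifies this closed subscheme with the affine space $T^+$. Either supply that affine-chart argument or simply cite [BB73], as the paper does.
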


We will call the  affine spaces $X_{i}^{+}$ the BB-cell attached to $x_{i}$.  

The BB-decomposition should be seen as an algebraic version of Morse stratification and it has many important and
similar consequences. Perhaps the most important one is that the partitions $\left\{ X^{+}_{i}\right\}$ provide
a topological filtration of $X$ into affine cells and hence the classes of the closures $\left \{ \overline{X^{+}_{i}}
\right \}$ form a basis in $H_{\ast}(X; \mbb{Z})$. Likewise, the Poincar\'{e} duals of these classes form a basis in
cohomology. 

 %     ]]]          

%%% Local Variables:
%%% mode: latex
%%% TeX-master: "MC-0"
%%% End:

%                 +++ Ending file MC-0-sec2.tex +++   
%------------------------------------------------------------------------------------------------------------------------------------------------

%                 +++ Starting file MC-0-sec3.tex +++   

\section{Parametrization of $SL_{n}$ Orbits in $\mcal{X}_{n}$} \label{S:orbits}
In this section we describe combinatorial indexing of $SL_{n}$-orbits and  $B$-orbits in  $\mcal{X}_{n}$.

\begin{defn}\label{D:refinement-order}
      A \emph{composition} of a positive integer $n$ is an ordered sequence $\mu=(\mu_1,\dots, \mu_k)$ of positive integers that sum to $n$. 
      The elements of the sequence $\mu_{i}$ are called the \emph{parts} of $\mu$.
 \end{defn}
 
 The compositions of $n$ corresponds to subsets of $\left\{1,3,\ldots, n-1 \right\} $ via the bijection
\begin{equation}\label{eqn:mutoI}
      \mu =(\mu_1,\dots, \mu_k) \longleftrightarrow I(\mu) := \left\{ 1,2,\ldots, n-1 \right\} \setminus  \left \{ \mu_1, \mu_1+\mu_2,\dots, \mu_1+\cdots + \mu_{k-1} \right \}.
\end{equation}
This correspondence gives a simple way to describe the refinement order on compositions of $n$. The \emph{refinement order} on
compositions of $n$ is defined by: $\mu \preceq \nu$ if and only if $I(\mu) \subseteq I(\nu)$. Informally, $\mu$ refines $\nu$ if $\mu$ can be obtained from $\nu$ by subdividing its parts. 
It follows that the most refined composition, $(1,1,\dots,1)$ is the unique minimal element of this ordering and the trivial
composition $(n)$ is the maximal element.

% \begin{rem}
% In the context of the variety $\mcal{X}_{n}$ one should really think of a composition in terms of the set $I(\mu)$. The
% latter is nothing but a way to keep track of a subset of simple roots of $SL_{n}$.
% \end{rem}

We recall from \cref{T:VDP} that the closed $G$-orbits of $\mcal{X}_{n}$ are in bijective correspondence with the subsets of simple roots of
$G$. This allows us to label the $G$-orbit closures by compositions. A $G$-orbit closure $\mcal{X}^{\mu}$, corresponding to a composition $\mu$, contains a unique open $G$-orbit denoted by $\mcal{O}^{\mu}$. 
The refinement order on compositions correspond to the inclusion order on the orbit-closures:   
\begin{equation} \label{E:refinement-order}\mathcal{X}^{\mu} \subseteq \mathcal{X}^{\nu} \Longleftrightarrow \mu \preceq
  \nu.
\end{equation}
The maximal element, with this poset structure, 
corresponds to the whole space $\mcal{X}_{n}$ and the minimal element corresponds to the variety of complete flags in $k^n$. 

A $G$-orbit $\mcal{O}^{\mu}$ is a union of finitely many $B$-orbits. We denote $B$-orbits of $\mcal{X}_{n}$ by script
letters $\mscr{O}$ (resp. $\mscr{X}$) to distinguish from $G$-orbits $\mcal{O}$ (resp. $\mcal{X}$). It turns out that
$B$-orbit closures (and hence their orbits) in $\mcal{O}^{\mu}$ are parametrized by combinatorial objects called \emph{$\mu$-involutions}.

\begin{defn}
A \emph{$\mu$-involution} $\pi$ is a permutation of the set $[n]$, which when written in one-line notation and 
partitioned into strings of size given by $\mu$, that is $\pi = [\pi_1 | \pi_2 | \dots | \pi_k]$ with $\pi_j$ a string of 
length $\mu_j$, has the property that each $\pi_j$ is an involution when viewed as the one-line notation of a 
permutation of its alphabet. We will sometimes refer to the sub-strings $\pi_{i}$ as a the components of $\pi$. 
\end{defn}

\begin{exmple}\label{E:muex}
For example, $\pi = [26|8351|7|94]$ is a $(2,4,1,2)$-involution and the string $8351$ is viewed as one-line notation 
for the involution $(1,8)(3)(5)$ of its alphabet. (We adopt the non-standard convention of including one-cycles when 
writing a permutation in cycle notation, since we have to keep track of what alphabet is being permuted when 
working with $\mu$-involutions.)
\end{exmple}

\begin{defn}\label{def:mu-inv}
      Suppose $\mu = (\mu_{1}, \ldots, \mu_{k})$ is a composition of $n$. Let $\pi = [\pi_1 | \pi_2 | \dots | \pi_k]$ be a $\mu$-involution. Let $\mcal{A}_{j} \subset [n]$ denote the alphabet of the permutation $\pi_{j}$. 
      A \emph{distinguished complete quadric} $Q_{\pi}$ associated to $\pi$  is the complete quadric $Q_{\pi}$ given by the following data. 
\begin{enumerate}[label={(\roman*)}]

\item A partial flag 
  \[ \mcal{F}_{\pi} :  0 = V_0 \subset V_1 \subset V_2 \subset \dots \subset V_{k-1} \subset V_k = k^{n} \]
  where $V_{j}$ is spanned by the standard basis vectors $e_{a_{i}}$ for $a_{i} \in \mcal{A}_{i}$ with $i \leq j$. Note that $\dim V_{j} = \mu_1 + \mu_2 + \cdots + \mu_j$.
     
\item On each successive quotient $V_{j}/V_{j-1} $ a non-degenerate quadric $Q_{\pi_{j}}$ is given by the recipe: if $\mcal{A}_{j}$ is
      the alphabet underlying the involution $\pi_{j}$ and suppose $\pi_{j}$ has a cycle decomposition (in this alphabet) of the
      form $(a_1,b_1)\ldots(a_s, b_s)(c_1)\ldots(c_t)$ then $Q_{\pi_{j}}$ is the quadric
$x_{a_1} x_{b_1} + \cdots + x_{a_s} x_{b_s} + x_{c_1}^2 + \dots x_{c_t}^2$.
\end{enumerate}
\end{defn}

\begin{exmple}
      For example consider the $\mu$-involution $\pi$  as in \cref{E:muex} above. Then the flag $\mcal{F}_{\pi}$ 
      is given by $$ 0 \subset V_{26} \subset V_{123568} \subset V_{1235678} \subset V$$ and the 
associated sequence of non-degenerate quadrics is $x_2^2 + x_6^2,  x_1x_8 + x_3^2 + x_5^2, x_7^2, x_4 x_9$.

\end{exmple}

%  ]]]]

% WEAK ORDER ON INVOL    [[[

\subsection{Weak Order for $\mu$-involutions}

The Richardson-Springer (RS) monoid, associated to $S_n$ and denoted by $\mcal{M}(S_n)$, is the monoid generated
by elements  \( \langle{s_1, \dots, s_{n-1}} \rangle \) subject to the relations
\begin{align*}
  s_i^2 = s_i & \; \mtxt{for all } i, \\
  s_i \cdot s_j = s_j \cdot s_i & \mtxt{  if }|i - j| > 1, \\
  s_i \cdot s_{i+1} \cdot s_i = s_{i+1} \cdot s_i \cdot s_{i+1} &  \mtxt{ for } 1 \leq i < n - 1.
\end{align*}

The set theoretic mapping taking the transposition $(i, i+1) \in S_{n}$ to $s_{i} \in \mcal{M}(S_{n})$ extends to a well
defined map to all of $S_{n}$. In other words, if  $w \in S_{n}$ admits a reduced expression $s_{i_{1}} \ldots
s_{i_{k}}$ then the corresponding element $s_{i_{1}} \cdot \ldots \cdot s_{i_{k}} \in \mcal{M}(S_{n})$ is independent of
the choice of reduced expression of $w$. The action of $S_{n}$ on a set and the action of $\mcal{M}(S_{n})$ on the same
set are quite different because the set theoretic bijection between $S_{n}$ and $\mcal{M}(S_{n})$ is not a monoid
morphism. In the sequel, the intended action will be clear from the context.

There is a natural action of the Richardson-Springer monoid of $S_n$ on the set of all $B$-orbits
and consequently on the set of all $\mu$-involutions, see \cite{RS90} for details. In the case
where $\pi$ is an ordinary involution of $S_n$ (i.e., when $\mu = (n)$ or equivalently a $B$-orbit in the open $G$-orbit
of $\mcal{X}_{n}$), the action of the generator $s_{i}$ corresponding to the simple 
transposition $(i,i+1)$ is explicitly given by

\begin{equation}\label{eqn:RSaction}
s_i \cdot \pi = \begin{cases} s_i \pi s_i & \text{if $\ell(s_i \pi s_i) = \ell(\pi) - 2$} \\
s_i \pi & \text{if $s_i \pi s_i = \pi$ and $\ell(s_i \pi) = \ell(\pi) - 1$}
\\ \pi & \text{otherwise} \end{cases},
\end{equation}
where the multiplication in the right-hand-side is the group multiplication in $S_{n}$.

If $\pi = [\pi_1 | \pi_2 | \dots | \pi_k]$ is a general $\mu$-involution, then the action of $s_{i}$ is as follows. 
\begin{enumerate}[label={(Case \roman*)}]

\item If there is a sub-string, say  $\pi_r$ of $\pi$, whose alphabet contains the letters $i, i+1$ then
  \[s_i \cdot \pi = s_{i} \cdot [\pi_{1}| \ldots| \pi_{k}] \defeq [\pi_{1}| \pi_{2}| \ldots | s_{i} \cdot \pi_{r}|
    \ldots | \pi_{k}]\] where $s_i \cdot \pi_r$ is defined by \cref{eqn:RSaction} considering each $\pi_{r}$ as an involution of its alphabet. 

\item If no sub-string of $\pi$ is of the above form then 
  
\[
  s_{i} \cdot \pi \defeq \begin{cases} \mtxt{ interchange letters } i \mtxt{ and } i+1 &  \mtxt{ if } i+1 \mtxt{ precedes } i \mtxt{ in } \pi \\
    \pi & \mtxt{ otherwise }
  \end{cases}
\]
\end{enumerate}

The action $w \cdot \pi$ of an arbitrary $w \in \mcal{M}(S_n)$ and arbitrary $\mu$-involution $\pi$ is defined
recursively: if $w = s_{i_1} s_{i_2} \cdots s_{i_\ell}$ be any reduced expression then
\[ w \cdot \pi \defeq s_{i_1} \cdot ( s_{i_2} \cdot \cdots ( s_{i_\ell} \cdot \pi) \cdots ).\]

\begin{defn}
Given any  $\mu$-involution
$\pi = [\pi_1 | \pi_2 | \dots | \pi_k]$. Consider the length function $L_{\mu}(\pi)$ is defined by the formula
\begin{equation}\label{eqn:mu-invlength}
L_{\mu}(\pi) := \ell(w(\pi)) + \sum_{i=1}^k L(\pi_i),
\end{equation}
where $w(\pi)$ is the permutation obtained by rearranging the elements in each string
$\pi_i$ in increasing order and $L(\pi_i)$ is the length of the corresponding involution $\pi_i$ as
defined by \eqref{eqn:invlength}. 
\end{defn}

\begin{exmple}
If $\pi = [5326 | 41]$ a $(4,2)$-involution. Then $w(\pi) = 235614$ and
$L_{(4,2)}(\pi) = 6 + 2 + 1 = 9$.
\end{exmple}

We define the weak order on two $\mu$-involutions. The poset gives a partial  order between the $B$-orbits appearing in
a fixed $G$-orbit. 

 \begin{defn} \label{Def:wkordr}
   The \emph{weak order} on two $\mu$-involutions $\pi$ and $\rho$ is given by \[\pi \leq_{W} \rho \mtxt{ if and only if  } \rho = w \cdot \pi\]
       for some element $w$ in the RS-monoid $\mcal{M}(S_n)$. 
 \end{defn}
 
The covering relations in weak order are labeled by simple roots. The associated poset has a
maximal and minimal element (denoted by $\min$ and $\max$ respectively). The minimum and maximum elements, in the weak
order, also admit explicit descriptions:  $\min$ (resp, $\max$) denote the string $n \dots 2 1$ (resp., $1 2 \dots n$) partitioned according to $\mu$. 

Starting from any element $\pi$ one can
construct maximal chains recursively by successively picking
simple transpositions and letting them act on a previous element of the chain.  Roughly speaking this leads us to the
idea of a  $\WW$-set of $\pi$, denoted by $\WW(\pi)$. It is the set of all
elements $w \in S_{n}$ such that the $ w\cdot \pi = \max$ and moreover $L_{\mu}(\max) - L_{\mu}(\pi) = \ell(w)$ forms a
chain in the weak order poset. For our purposes, we will consider a slight generalization of this notion, see
\cref{def:w-set} below.

\begin{defn} [see \cite{CanJoyce13, CanJoyceWyser1, CanJoyceWyser2}]~\label{def:w-set}
Let $\pi, \rho$ be two $\mu$-involutions. The $\WW$-set of the pair $(\pi, \rho)$ is the subset of $S_{n}$  defined by \[\WW(\pi, \rho) := \{ w \in
S_n : w \cdot \pi = \rho \text{ and } \ell(w) = L_{\mu}(\pi) - L_{\mu}(\rho) \}.\] The $\WW$-set $\WW(\pi, \max)$ will be
denoted by $\WW(\pi)$. In this case the  \emph{reverse $\WW$-set} of $\pi$, denoted by $\WW^{-1}(\pi)$ is the $\WW$-set $\WW(\min, \pi)$.
\end{defn}

Note the $\WW(\pi, \rho) \neq \emptyset$ if and only if $\pi \leq \rho$ in weak order. 

The function $L_{\mu}(\min) - L_{\mu}(\pi)$ defines the rank function on the weak order poset of $\mu$-involutions. It
is a generalization of the order on ordinary involutions defined in  see \cref{eqn:invlength}.

\begin{rem} Some remarks about the Definition \ref{def:w-set} are in order.

 \begin{enumerate} 
 % \item \label{I:opposite-order} Consider the set of $\mu$-involutions but with the opposite order of the weak
 %   ordering. Then the reverse $\WW$-set $\WW^{-1}(\pi)$  with respect to weak order is the $\WW$-set of $\pi$ with
 %   respect to the opposite order. 

\item The inverse of an element in $W(\pi, \rho)$ in Definition \ref{def:w-set} is referred to as an
\emph{atom} and the set of atoms are described concretely by Theorems 5.10 and 5.11 in
\cite{HMP}.

\item The $\WW$-sets have geometric significance. This has been thoroughly investigated by Brion in \cite{Brion98}.  

\end{enumerate}
\end{rem}

\section{Barred permutations}\label{S:Barredpermutations}
In this section we introduce \emph{barred permutations} which parametrize the torus fixed points in $\mcal{X}_{n}$.

\begin{defn}\label{D:special subset}
A composition $\mu$ of $n$ is called \emph{special} if every part $\mu_{i}$ of $\mu$ has length at-most $2$. Equivalently, $\mu$ is special
if the associated subset $I(\mu)$ (see \cref{eqn:mutoI}) does not contain any consecutive integers. 
\end{defn}
Note that the refinement of a special composition is also a special composition.

\begin{defn}
    Let $\mu=[\mu_{1}|\mu_{2}| \ldots|\mu_{k}]$ be a special composition. A $\mu$-involution $\pi=[\pi_{1}|\ldots| \pi_{k}]$ is called a
    \emph{barred permutation} if whenever $\mu_{k}$ has length two the string $\pi_{k}$ is of the form $\pi_{k} = ji$
    with $j > i$. For example $[1|32]$ is a barred permutation but $[1|23]$ is not. 
\end{defn}

The set of all barred permutations associated to a composition of type $\mu$ will be denoted by $\mcal{B}_{\mu}$ and we
let  $\mcal{B}_{n}$ denote all possible barred permutations on $[n]$.

In particular, given any special composition $\mu$ of
$n$, the ordered sequence $(1,2, \ldots, n)$ corresponds to a unique barred permutation in $\mcal{B}_{\mu}$. We call it the \emph{special element} of $\mcal{B}_{\mu}$.   

\begin{lema} \label{L:barrsize}
Let $b_n \defeq \#\mathcal{B}_n$. Then sequence $b_n$ satisfies the following recurrence relation. 
\begin{align}\label{A:basicrecurrence0}
b_{n+1} = {n+1 \choose 2 } b_{n-1} + (n+1) b_{n} \text{ for } n\geq 1,
\end{align}
and the initial conditions $b_0 = b_1 = 1$.
\end{lema}
\begin{proof}
Let $\pi = [\pi_1 | \cdots | \pi_{k-1} | \pi_k]$ be a barred permutation on $[n+1]$. We count possibilities for $\pi$ according to its last string $\pi_k$. The first term in the recurrence counts the number of barred permutations where the length of $\pi_k$ is $2$ and the second term in the recurrence counts the number of barred permutations where the length of $\pi_k$ is $1$.
\end{proof}

\begin{prop}\label{T:formula}
The exponential generating series $F(x)=\sum_{n\geq 0} \frac{b_n}{n!} x^n$ for the number of barred permutations of length $n$ is given by
\[
F(x)=  \frac{1}{1-x-x^2/2} = \sum_{n=0}^{\infty} \frac{1}{\sqrt{3}} \left( \frac{(1+\sqrt{3})^{n+1} - (1 - \sqrt{3})^{n+1}}{2^{n+1}} \right) x^n.
\]
Hence, the number of barred permutations (equivalently the number of $T$-fixed points in $\mathcal{X}_n$) is
\[
b_n= \frac{n!}{2^n} \sum_{i=0}^{\lfloor n/2 \rfloor} {n+1 \choose 2i+1} 3^i.
\]
\end{prop}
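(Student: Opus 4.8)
The plan is to reduce everything to the recurrence of \cref{L:barrsize} and to pass to the normalized sequence $a_n := b_n/n!$, which are precisely the coefficients of the exponential generating series $F(x) = \sum_{n\ge 0} a_n x^n$. The key observation is that dividing the recurrence $b_{n+1} = \binom{n+1}{2}b_{n-1} + (n+1)b_n$ through by $(n+1)!$ turns the polynomial coefficients into constants: since $\binom{n+1}{2}/(n+1)! = \tfrac{1}{2}\cdot\tfrac{1}{(n-1)!}$ and $(n+1)/(n+1)! = 1/n!$, one obtains the constant-coefficient recurrence
\[
a_{n+1} = a_n + \tfrac{1}{2}\,a_{n-1}\qquad (n\ge 1),\qquad a_0 = a_1 = 1.
\]

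First I would derive the closed form of $F$ from this linear recurrence. Multiplying the recurrence by $x^{n+1}$ and summing over $n\ge 1$, the three resulting sums evaluate to $F(x) - 1 - x$ on the left and to $x(F(x)-1)$ and $\tfrac{x^2}{2}F(x)$ on the right; collecting terms gives $(1 - x - x^2/2)\,F(x) = 1$, which is the first asserted identity $F(x) = 1/(1 - x - x^2/2)$.

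Next I would extract the coefficients. The characteristic equation $2t^2 - 2t - 1 = 0$ of the recurrence has roots $\lambda_\pm = (1\pm\sqrt3)/2$, with $\lambda_+ + \lambda_- = 1$ and $\lambda_+\lambda_- = -1/2$. Writing $a_n = A\lambda_+^n + B\lambda_-^n$ and solving the two initial conditions (using $1 - \lambda_\mp = \lambda_\pm$) gives $A = \lambda_+/\sqrt3$ and $B = -\lambda_-/\sqrt3$, hence
\[
a_n = \frac{\lambda_+^{\,n+1} - \lambda_-^{\,n+1}}{\sqrt3} = \frac{1}{\sqrt3}\cdot\frac{(1+\sqrt3)^{n+1} - (1-\sqrt3)^{n+1}}{2^{n+1}},
\]
which is the claimed coefficient of $x^n$ in $F$.

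Finally, $b_n = n!\,a_n$, and expanding $(1\pm\sqrt3)^{n+1}$ by the binomial theorem makes the even-index terms cancel in the difference while the odd-index terms double: using $(\sqrt3)^{2i+1} = 3^i\sqrt3$ one gets $(1+\sqrt3)^{n+1} - (1-\sqrt3)^{n+1} = 2\sqrt3\sum_{i=0}^{\lfloor n/2\rfloor}\binom{n+1}{2i+1}3^i$. Substituting this into $b_n = n!\,a_n$ cancels the $\sqrt3$ and one factor of $2$, yielding $b_n = \tfrac{n!}{2^n}\sum_{i=0}^{\lfloor n/2\rfloor}\binom{n+1}{2i+1}3^i$. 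The computation is entirely routine once the normalization is in place; the only point demanding care is the bookkeeping in this last binomial step—tracking which terms survive the subtraction and pinning down the upper summation bound $\lfloor n/2\rfloor$ from the constraint $2i+1 \le n+1$.
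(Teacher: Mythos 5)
Your proposal is correct and follows essentially the same route as the paper: substituting $a_n = b_n/n!$ into the recurrence of \cref{L:barrsize} to obtain the constant-coefficient recurrence $a_{n+1} = a_n + \tfrac{1}{2}a_{n-1}$, and then solving it. The paper leaves the solving step as routine, whereas you have carried out the generating-function manipulation, the characteristic-root computation, and the binomial expansion explicitly, and all of these details check out.
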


\begin{proof}
   In the light of \cref{L:barrsize}, we substitute $a_k := b_k / k!$ in the \cref{A:basicrecurrence0} above. This leads to the new
     linear recurrence relation
\begin{align}\label{A:aks}
a_k = a_{k-1} + \frac{1}{2} a_{k-2} \text{ for } k \geq 2
\end{align}
with initial conditions $a_0 = a_1 = 1$.

The proposition follows from solving the resulting linear recurrence relation.  
\end{proof}

\begin{prop}\label{L:TfixedptinBorbit}
Let $\mu =(\mu_1,\dots, \mu_k)$ be a given composition and $\pi = [\pi_1 | \cdots | \pi_k ]$ a $\mu$-involution. Then
the $B$-orbit $\mscr{O}^{\pi}$ contains a $T$-fixed point if and only if $\pi$ is a barred permutation. Moreover, in
this case the torus fixed point is the distinguished quadric $Q_{\pi}$.
\end{prop}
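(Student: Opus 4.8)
The plan is to reduce the statement to a direct classification of the $T$-fixed points of $\mcal{X}_n$ and then to match that classification against the combinatorics of barred permutations. Throughout I use the description of a point of $\mcal{X}_n$ as a pair $(\mcal{F}^{\bullet}, Q)$ consisting of a partial flag together with a non-degenerate quadric on each successive subquotient (\cref{R:ComplQ}), made rigorous by the fibration $\mcal{O}^{\mu} = G \times_{P_{\mu}} \mbb{O}^{\mu}$ of \cref{R:Fibstr}. Since $Q_{\pi} \in \mscr{O}^{\pi} = B \cdot Q_{\pi}$ by construction, the proposition follows once I prove two assertions: (a) if $\pi$ is barred then the distinguished quadric $Q_{\pi}$ is itself $T$-fixed; and (b) every $T$-fixed point of $\mcal{X}_n$ is of the form $Q_{\rho}$ for a (necessarily unique) barred permutation $\rho$.

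For (a), suppose $\pi = [\pi_1 | \cdots | \pi_k]$ is barred, so $\mu$ is special and each length-two block $\pi_j$ is a transposition. The flag $\mcal{F}_{\pi}$ of \cref{def:mu-inv} is spanned by standard basis vectors, hence is a coordinate flag fixed by $T$. On each subquotient the associated quadric is either $x_c^2$ (a one-cycle, when $\mu_j = 1$) or $x_a x_b$ (the transposition $(a,b)$, when $\mu_j = 2$); in either case it is a single monomial, so its class in the relevant projective space of quadrics is fixed by $T$. As $T$-fixedness localizes to the flag and to each subquotient through the fibration of \cref{R:Fibstr}, the point $Q_{\pi}$ is $T$-fixed, which is the "if" direction.

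The crux is (b). Let $p$ be a $T$-fixed point, lying in the $G$-orbit $\mcal{O}^{\mu}$. Writing $\mcal{O}^{\mu} = G \times_{P_{\mu}} \mbb{O}^{\mu}$ and $p = [g, y]$, $T$-invariance forces $g^{-1} T g \subset P_{\mu}$; since this is a maximal torus of $P_{\mu}$, after adjusting $g$ on the right by $P_{\mu}$ I may take $g \in N_G(T)$, so the underlying flag is a coordinate flag $w P_{\mu}$ and the condition becomes that $y \in \mbb{O}^{\mu} = \prod_i SL_{m_i}/SO_{m_i}$ is $T$-fixed, i.e. each non-degenerate quadric $Q_j$ on its block $\mcal{A}_j$ is $T$-fixed. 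Now $T$ acts on the span of $\{e_a : a \in \mcal{A}_j\}$ through the distinct characters $\epsilon_a$, and on $\Sym^2$ the monomials $x_a x_b$ have weights $-(\epsilon_a + \epsilon_b)$; because the only relation among the $\epsilon_a$ in the weight lattice of $SL_n$ is $\sum_{a=1}^n \epsilon_a = 0$, distinct monomials within a single block carry distinct weights. Hence a $T$-fixed quadric on $\mcal{A}_j$ is a single monomial, and a single monomial is non-degenerate only if $|\mcal{A}_j| \leq 2$, in which case it is $x_c^2$ (size one) or the hyperbolic form $x_a x_b$ (size two). Therefore $\mu$ is special, every length-two block carries a transposition, and reading off the blocks together with the quadrics exhibits $p$ as $Q_{\rho}$ for a barred permutation $\rho$.

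Finally I combine (a) and (b). If $\mscr{O}^{\pi}$ contains a $T$-fixed point $p$, then by (b) $p = Q_{\rho}$ with $\rho$ barred, and since $p = Q_{\rho} \in \mscr{O}^{\rho}$ lies in both $\mscr{O}^{\pi}$ and $\mscr{O}^{\rho}$, disjointness of $B$-orbits gives $\mscr{O}^{\pi} = \mscr{O}^{\rho}$; as $\mu$-involutions index the $B$-orbits of a fixed $\mcal{O}^{\mu}$ bijectively (and $p \in \mcal{O}^{\mu}$ forces $\rho$ to be a $\mu$-involution), I conclude $\pi = \rho$, so $\pi$ is barred and the fixed point is $Q_{\pi}$; the reverse implication is exactly (a). I expect the main obstacle to be the rigorous justification that $T$-fixedness genuinely decouples into fixedness of the coordinate flag and of each subquotient quadric, i.e. converting the intuitive pair description of \cref{R:ComplQ} into an honest statement about fixed points of $G \times_{P_{\mu}} \mbb{O}^{\mu}$; once this reduction is secured, the weight computation and the bijection between $\mu$-involutions and $B$-orbits finish the argument routinely.
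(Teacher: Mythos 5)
Your proof is correct and follows essentially the same route as the paper: both reduce, via the $T$-equivariant projection $\mcal{O}^{\mu} \to G/P_{\mu}$ (equivalently the associated-bundle description $\mcal{O}^{\mu} = G \times_{P_{\mu}} \mbb{O}^{\mu}$), to the statement that the underlying flag is a coordinate flag and each subquotient quadric is $T$-fixed, and then observe that a $T$-fixed non-degenerate quadric on a block must be the single monomial $x_c^2$ or $x_a x_b$, forcing block size $\leq 2$ and the barred form. The only differences are refinements: you derive the constraint $\mu_i \leq 2$ from the weight/rank argument rather than citing Strickland, and you explicitly classify \emph{all} $T$-fixed points as $Q_{\rho}$ for barred $\rho$ and invoke disjointness of $B$-orbits to conclude that a fixed point of $\mscr{O}^{\pi}$ must be $Q_{\pi}$ itself --- a step the paper leaves implicit.
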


\begin{proof}
    It follows from the work of Strickland, see \cite{Strickland86}, that each component $\mu_{i} \leq 2$.
    
      Let $Q_{\pi} \defeq (\mcal{F}_{\pi}^{\bullet}, Q_{\mcal{F}^{\bullet}})$ denote the complete quadric associated to
      $\pi$, see \cref{R:ComplQ} for the notation. We set $\mcal{F}^{\bullet} = 0 \subset V_{1} \subset \ldots \subset V_{k} =
      k^{n}$ and let $Q_{\mcal{F}^{\bullet}}^{i}$ denote the quadric hypersurface on the projectivized sub-quotient
      $V_{i}/V_{i-1}$.   The projection map from the open $SL_{n}$-orbit $\mcal{O}^{\mu}$
      containing $Q_{\pi}$ to the partial flag-variety $G/P_{\mu}$ is $SL_{n}$-equivariant and hence $Q_{\pi}$ is
      $T$-fixed if and only if its projection $\mcal{F}^{\bullet}_{\pi}$ is $T$-fixed and the point $Q_{\mcal{F}^{\pi}}$
      is fixed by the induced $T$-action on the fiber.

      The description of $T$-fixed flags in a partial flag variety is well-known -- these correspond to permutation of the
      standard flag. It is clear from the description of the fibers of $\mcal{O}^{\mu} \rarr G/P_{\mu}$ that the $T$-action
      on the fiber  $Q_{\mcal{F}^{\bullet}}$ is given by diagonal action on each factor $Q_{\mcal{F}^{\bullet}}^{i}$.
      The induced $T$-action on each factor $Q_{\mcal{F}^{\bullet}}^{i}$ is given by the action of diagonal matrices on symmetric
      $\mu_{i} \times \mu_{i} $ matrices associated to  $Q^{i}_{\mcal{F}^{\bullet}}$ (explicitly, $ D \mapsto D^{\mbf{t}}\cdot[Q^{i}_{\mcal{F}^{\bullet}}]\cdot D$).

      We have $\mu_{i} = \dim_{k}(V_{i}/V_{i-1}) \leq 2$. When $\mu_{i} = 1$ the only $T$-invariant quadric hypersurface is given by
      $x^{2}$; when $\mu_{i}=2$, direct computation shows that the $T$-invariant quadric hypersurface is $xy$ (and not
      $x^{2}+ y^{2}$). In other words, when $\mu_{i}= 2$, the vector space  $V_{i}/V_{i-1}$ is generated by the projection of standard
      basis vectors $e_{\alpha_{i}}, e_{\beta_{i}}$ for $\alpha_{i} < \beta_{i}$ and the factor $Q^{i}_{\mcal{F^{\bullet}}}$ corresponds to the
      involution which, in one-line notation, must be $[\beta_{i}, \alpha_{i}]$. This shows that indeed if $Q_{\pi}$ is
      $T$-fixed then $\pi$ must be a barred permutation.

      Conversely, if $\pi$ is a barred permutation then the distinguished quadric associated to $Q_{\pi}$ is evidently
      $T$-fixed. This proves the proposition.   

\end{proof}

\subsubsection{Weyl group action}\label{Ss:wga}

Let $\mu$ be any special composition of $n$. Given any $\sigma \in S_{n}$ consider the automorphism $\sigma:
\mcal{B}_{\mu} \rarr \mcal{B}_{\mu}$ defined on elements by associating $\pi \mapsto \sigma(\pi)$ where $\sigma(\pi)$ is
obtained in the following way.
\begin{itemize}
          \item Remove all bars from $\pi$ and consider the resulting ordered string $\pi^{\prime}$.
          \item Apply the permutation $\sigma$ to the string $\pi^{\prime}$ and consider the resulting string $\sigma(\pi^{\prime})$.
          \item Reintroduce the bars on the ordered string  $\sigma(\pi^{\prime})$, making it into a $\mu$-involution,  and adjust length two
strings, if necessary, to get a barred permutation. 

\end{itemize}

In the light of \cref{L:TfixedptinBorbit} the following lemma is immediate. 
\begin{lema}\label{L:WGrpAct}

    The automorphisms $\sigma$ define an action of $S_{n}$ on $\mcal{B}_{\mu}$. Moreover, let  $S_{\mu}$ denote the 
 parabolic subgroup of $S_{n}$, corresponding to the canonical map of the $G$-orbit $\mcal{X}^{\mu}$ to the partial flag
 variety $G/P_{I(\mu)}$. Then we have an $S_{n}$-equivariant bijection between  $S/S_{\mu}$ and $\mcal{B}_{\mu}$ which sends
 $[W_{\mu}]$ to the special element of $\mcal{B}_{\mu}$. 

\end{lema}

\subsubsection{Subdivision operator}

Given any integer $1 \leq i < j \leq n$ we will associate a subdivision operator $\sd_{ji}: \mcal{B}_{n} \rarr \mcal{B}_{n}$
as follows. 

\begin{defn}\label{D:subdivison}
  Suppose $\pi=[\pi_{1}| \ldots| \pi_{k}]$ is any barred permutation. Then  
  \begin{equation}
      \sd_{ji}(\pi) = \begin{cases}
          \pi & \mtxt{ if no component } \pi_{r} \mtxt{ is of the form } ji \\
[\pi_{1}| \ldots\underbrace{|j|i|}_{\pi_{\ell}}\ldots| \pi_{k} ] & \mtxt{ if the component string } \pi_{\ell} \mtxt{ is
  of the form }  ji.
\end{cases}
\end{equation}

We extend $\sd_{ij}$ for $i<j$ by declaring $\sd_{ij} \defeq \sd_{ji}$. In particular if $\alpha$ is the standard positive simple
root of $SL_{n}$ then $\sd_{\alpha}$ makes sense.
\end{defn}

The subdivision operator can change the composition type of a barred permutation. Moreover, it is fairly easy to see
that $\sd_{ij}$ is not $S_{n}$-equivariant with the action described in \cref{L:WGrpAct} i.e., $\sd_{ij} \circ \sigma \neq
\sigma \circ \sd_{ij}$ on $\mcal{B}_{n}$.

%  ]]]

%%% Local Variables:
%%% mode: latex
%%% TeX-master: "MC-0"
%%% End:

%                 +++ Ending file MC-0-sec3.tex +++   
%------------------------------------------------------------------------------------------------------------------------------------------------

%                 +++ Starting file MC-0-sec4a.tex +++   

% T STABLE CURVES AND STUFF [[[
\subsection{Towards a GKM theory of complete quadrics}\label{SS:coh}
GKM theory and its extension to algebraic varieties by Brion provides a powerful tool to calculate equivariant and (in
many cases) non-equivariant cohomology. To apply this theory in the context of complete quadrics one needs to answer
the following two questions.   

\begin{qstn}\label{Q:one}
    Given a codimension one algebraic subtorus $T^{\prime} \subset T$ classify the positive dimensional irreducible
    components $Y$ of the fixed point varieties $\mcal{X}_{n}^{T^{\prime}} \subset \mcal{X}_{n}$. 
\end{qstn}

An important feature of smooth, projective, spherical varieties is that such components $Y$ are either isomorphic to
$\mbb{P}^{1}$ or when $T^{\prime} = \Ker(\alpha)$, for some positive simple root $\alpha$, then $Y$ is either isomorphic
to $\mbb{P}^{1}$ or it is a $SL_{2}$-spherical variety isomorphic to $\mbb{P}^{2}$ or a rational ruled surface, see \cite{BanerjeeCan,Brion97} for
details. In the particular case of complete quadrics we can precisely work out the irreducible components of the
$T^{\prime}$-fixed subvarieties. 

\begin{notn}
We introduce some notation that will be used throughout the rest of this section. Consider a torus fixed point $\pi \in
\mcal{X}_{n}$. Let $\mu(\pi)$ denote the (special) composition indexing the $G$-orbit $G \cdot \pi $,
i.e. $\mcal{X}^{\mu(\pi)} = \overline{G \cdot \pi}$. Let $I(\pi) $ denote the subset of simple roots $\Delta$
corresponding to $\mu(\pi)$ and $p_{\pi}: \mcal{X}^{\mu(\pi)} \rarr G/P_{I(\pi)}$  denote the canonical
projection\footnote{Note: we deviate from denoting the projection as $\pi_{p}$ as in \cref{R:Fibstr} to avoid the awkward
  notation $\pi_{\pi}$.}. We denote the parabolic subgroup of the Weyl group $W$ by $W_{I(\pi)}$ and $W^{I(\pi)} \defeq
W/W_{I(\pi)}$. We call a torus fixed point $\pi$ special if $p_{\pi}(\pi)$ is the coset of the standard parabolic
subgroup $P_{I(\pi)}$ in
$G/P_{I(\pi)}$. 

Let $T^{\prime} = \Ker(\delta)$ denote a codimension one subtorus of $T$ for some root $\delta$. 
\end{notn}

It follows from \cite[\S 7]{DP83}  that the tangent space $T_{\pi}$ at $\pi$ in $\mcal{X}_{n}$ admits a $T$-stable
direct sum decomposition 
\begin{equation}\label{E:tangentSpace}
    T_{\pi}  =  T^{h}_{\pi} \oplus T^{v}_{\pi} \oplus T^{n}_{\pi},
\end{equation}
where
\begin{enumerate}
\item $T_{\pi}^{h}$ is isomorphic to the tangent space of $G/P_{I(\pi)}$ at the point $p_{\pi}(\pi)$;
\item $T_{\pi}^{v}$ is the tangent space of the  fiber  $p_{\pi}^{-1}(p_{\pi}(\pi))$;
\item $T^{n}_{\pi}$ is the stalk of the normal bundle to $\mcal{X}^{\mu(\pi)} \hookrightarrow \mcal{X}_{n}$ at the point
$\pi$. 
\end{enumerate}

The general idea is that when the point $\pi$ is special the summands in \cref{E:tangentSpace} can be explicitly computed in terms of certain subsets of the root system $\Phi$. When $\pi$ is not special we can always find a special point $\pi^{\prime}$ such that $\pi = w(\pi^\prime)$ for
some, possibly non-unique, $w \in W$, and at the level of tangent spaces we get $T_{\pi} = w(T_{\pi^{\prime}})$. All
such possible choices of $w \in W$ correspond to a unique element of $W^{I(\pi)}$. In this case a $T^{\prime} = \Ker(\delta)$ fixed
subspace at $T_{\pi}$ corresponds to a $\Ker(w^{-1}\cdot \delta)$ fixes subspace at $T_{\pi^{\prime}}$. 

When $\pi$ is special, $T_{\pi}^{h}$ is isomorphic to the Lie-algebra of the unipotent radical $\mfrk{U}(P_{I(\pi)}^{-})$ of the opposite parabolic
subgroup $P_{I(\pi)}^{-}$. Denoting the roots appearing in $\mfrk{U}(P_{I(\pi)}^{-})$ by $\Phi_{h}$, any
$T^{\prime}$-fixed subvariety has tangent space contained in $T_{\pi}^{h}$ if and only if $\pm\delta \in \Phi^{-}_{h}$ and all such
subvarieties are isomorphic to $\mbb{P}^{1}$ with $T$ acting by weight $\delta$. %%% WE are A_n root system so no more
                                %%% than two roots 

\begin{lema}\label{L:vertical-space}
Assume $\pi$ is special, then we have a $T$-equivariant decomposition
\[ T_{\pi}^{v} = \oplus_{\alpha \in I(\pi)} \mfrk{sl}_{2, \alpha}/\mfrk{so}_{2, \alpha} = \oplus_{\alpha \in I(\pi)} (k_{-\alpha} \oplus k_{\alpha}) \]
where $\mfrk{sl}_{2, \alpha}$ corresponds to the unique $\mfrk{sl}_{2}$  pair in the Lie algebra $\mfrk{sl}_{n}$
corresponding to the simple root $\alpha$.
\end{lema}
 Let $\Phi_{v}$ denote the set of negative roots $\{ \alpha: \alpha \in I(\pi) \}$. Then we have $T^{\prime}$-fixed subvariety if
 and only if $\delta = \pm \alpha$. In this case all such $T^{\prime}$-fixed subvarieties are isomorphic to
 $\mbb{P}^{2}$ and the maximal torus $T$ acts on a generic point with weight $\pm 2\alpha$.

\begin{lema}\label{L:normal-space}
    When $\pi$ is a special, we have $T$-weight space decomposition
    \[ T_{\pi}^{n} = \oplus_{\alpha \in \Delta \setminus I(\pi)}k_{-(\alpha + w_{I(\pi)}(\alpha))},\]
    where $w_{I(\pi)}$ is the longest element in the Weyl-group $W_{I(\pi)}$.
\end{lema}
Let $\Phi_{n}$ denote the set of roots $\{-(\alpha + w_{I(\pi)}(\alpha)): \alpha \in \Delta \setminus I(\pi) \}$. Then we have $T^{\prime}$-fixed subvariety if
and only if $\delta = \pm(\alpha + w_{I(\pi)}(\alpha))$. In this case all such $T^{\prime}$-fixed subvarieties are isomorphic to
 $\mbb{P}^{1}$ and the maximal torus $T$ acts on a generic point with weight $\delta$. 

\begin{rem}
In the case of quadrics, we refer the reader to~\cite[\S 2]{Strickland86} for detailed proofs of \cref{L:vertical-space} and \cref{L:normal-space}. Note that the set $I(\pi)$ in our notation corresponds to $J$ in \emph{loc. cit.} The general case
for any symmetric space is discussed in~\cite{DS85}.
\end{rem}

\begin{rem}\label{R:max-element}
We point out that by structure theory of $\msf{A}_{n}$-root systems, the simple reflections $s_{\beta}$ corresponding to $\beta \in I(\pi)$ commute. So
 $w_{I(\pi)}= \prod_{\beta \in I(\pi) } s_{\beta}$ where the product is taken in \emph{any} order. 
\end{rem}

\begin{rem} \label{R:str-fixed-pts}
The above analysis shows that the only two dimensional $T^{\prime}$ fixed varieties are along the fibers of the
projection map $p_{\pi}$ and it is isomorphic to $\mbb{P}^{2}$ viewed as an equivariant compactification of
$SL_{2}/SO_{2}$.
\end{rem}

The second question that one needs to answer is the following.
\begin{qstn}\label{Q:two}
   Suppose $Y \subset \mcal{X}_{n}^{T^{\prime}}$ is an any such component and $\pi$ is a given torus fixed point on $Y$. Then what are the other torus fixed points on $Y$?
\end{qstn}

Unfortunately, we do not have a complete satisfactory answer to this question. It is enough to consider the particular case when $\pi$ is
a special barred permutation. In this case, let us denote the irreducible component $Y$ by $Y_{\delta}$ where $\delta$
belongs to one of the subsets $\Phi^{\ast}$, where $\ast \in \{h,v,n \}$, as above. If $\delta$ is contained in $\Phi^{h}$ then $p_{\pi}(Y_{\delta})$
projects to a $T$-fixed curve in the flag variety $G/P_{I(\pi)}$ passing through $p_{\pi}(\pi)$. The structure of such
curves are known, see \cite[Lemma 2.2]{carr-kutt}, and it follows that the other torus fixed point is $\pi^{\prime} = r_{\delta}
\cdot \pi$ where $r_{\delta} \in W$ is the reflection  associated to $\delta$.

If $\delta \in \Phi_{v}$, then thanks to the product structure of the fibers of $p_{\pi}$, we can reduce to the case of
$SL_{2}/SO_{2}$ and show that the other two torus fixed points are given by $\sd_{\delta}(\pi)$ and
$r_{\delta}(\sd_{\delta}(\pi))$, where $\sd_{\delta}$ is the subdivision operation  and $r_{\delta}$ is the
reflection associated to $\delta$.

When $\delta \in \Phi_{n}$ we do not have a characterization of the other torus fixed point on $Y_{\delta}$.  

\begin{rem}
    We note that a complete description of the torus fixed points in the $T^{\prime}$-fixed curves $Y_{\delta}$
    corresponding $\pm \delta \in \Phi_{n}$ will immediately give us a presentation of the $T$-equivariant Chow cohomology ring, using the results of
    Brion in \cite[\S 7]{Brion97}. Combined with well known results about the isomorphism of cycle-class maps for smooth, projective varieties and
    equivariant formality of algebraic varieties we will get a new presentation of the cohomology ring of complete
    quadrics.
\end{rem}

%    ]]]

%%% Local Variables:
%%% mode: latex
%%% TeX-master: "MC-0"
%%% End:

%                 +++ Ending file MC-0-sec4a.tex +++   
%------------------------------------------------------------------------------------------------------------------------------------------------

%                 +++ Starting file MC-0-sec5.tex +++   

\section{A geometric order on degenerate involutions} \label{S:Bruhat} In this section we aim to study the Bruhat order
on all $B$-orbits or equivalently the order on all $\mu$-involutions as $\mu$-varies over all compositions of $n$. The
covering relations in this order come in two flavors: (a) covering relations between $\mu$-involutions for a fixed
composition, and (b) covering relations between involutions corresponding to different compositions.

There is a general recursive characterization of the Bruhat order on any spherical variety due to Timashev
\cite{Timashev94} using the action of the Richardson-Springer (RS) monoid. In the first case, this provides enough
information. In the second case we use $\WW$-sets (see \cite{Brion98}) to get sharper results.

\subsection{Geometric ordering on $\mu$-involutions: composition $\mu$ is fixed}

Let us fix a composition $\mu = (\mu_1, \dots, \mu_k)$ of $n$ and let $\pi$ be any $\mu$-involution. The $B$-orbit
(resp. its closure) indexed by $\pi$ is denoted by $\mscr{O}^{\pi}$ (resp., $\mscr{X}^{\pi}$). The Bruhat-order on
$\mu$-involutions is given by
\[\pi \leq \pi' \; \mtxt { if and only if  } \; \mscr{X}^{\pi} \subseteq \mscr{X}^{\pi'}.
\]

This is a ranked poset with unique maximum and minimum elements and rank function 
\[
\rank(\pi) := L_{\mu}(\min) - L_{\mu}(\pi).
\]
where $L_{\mu}(-)$ is the length function on a $\mu$-involution defined in \cref{eqn:mu-invlength}. Timashev's recursive
description on $\mu$-involutions (for a fixed composition) is as follows.

\begin{prop}\label{prop:recursive}
Let $\pi$ and $\rho$ be two $\mu$-involutions. Then $\pi \leq \rho$ in the Bruhat order if and only if
    \begin{enumerate}[label={(\roman*)}]
    \item $\pi = \rho$; or
    \item there exists $\mu$-involutions $\pi^* \leq \rho^*$ and a simple transposition $s_\alpha$
      such that under the RS-monoid actions we have $\rho = s_{\alpha} \cdot \rho^{\ast}, \pi = s_{\alpha} \cdot
      \pi^{\ast}$ and $\rho^{\ast} \neq \rho$. 
    \end{enumerate}
\end{prop}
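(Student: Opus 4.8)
The plan is to work directly with the geometric definition of the Bruhat order, $\pi \leq \rho \iff \mscr{X}^{\pi}\subseteq\mscr{X}^{\rho}$, and to exploit the minimal parabolic subgroups $P_{\alpha}\supseteq B$ attached to the simple roots $\alpha$. The single geometric input I would isolate first is a \emph{raising lemma}: for any $\mu$-involution $\nu$ and any simple root $\alpha$, the multiplication map $P_{\alpha}\times^{B}\mscr{X}^{\nu}\to\mcal{X}_{n}$ is proper, so its image $P_{\alpha}\cdot\mscr{X}^{\nu}$ is closed and irreducible; since by definition $s_{\alpha}\cdot\nu$ is the dense $B$-orbit of $P_{\alpha}\cdot\mscr{O}^{\nu}$, this image is exactly $\mscr{X}^{s_{\alpha}\cdot\nu}$. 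Thus $P_{\alpha}\cdot\mscr{X}^{\nu}=\mscr{X}^{s_{\alpha}\cdot\nu}$, a $P_{\alpha}$-stable set, and the combinatorial operator $s_{\alpha}\cdot$ becomes the geometric operation of sweeping an orbit closure by $P_{\alpha}$.

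The easy direction is $(\Leftarrow)$. Given $\pi^{*}\leq\rho^{*}$ with $\pi=s_{\alpha}\cdot\pi^{*}$, $\rho=s_{\alpha}\cdot\rho^{*}$ and $\rho^{*}\neq\rho$, the containment $\mscr{X}^{\pi^{*}}\subseteq\mscr{X}^{\rho^{*}}$ is preserved under applying $P_{\alpha}\cdot$. By the raising lemma $\mscr{X}^{\pi}=P_{\alpha}\cdot\mscr{X}^{\pi^{*}}$ and $\mscr{X}^{\rho}=P_{\alpha}\cdot\mscr{X}^{\rho^{*}}$ (the latter using $\rho\neq\rho^{*}$), so $\mscr{X}^{\pi}=P_{\alpha}\cdot\mscr{X}^{\pi^{*}}\subseteq P_{\alpha}\cdot\mscr{X}^{\rho^{*}}=\mscr{X}^{\rho}$, i.e. $\pi\leq\rho$. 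The degenerate possibility $\pi=\pi^{*}$ (when $\alpha$ does not raise $\pi^{*}$) is harmless, since then $P_{\alpha}\cdot\mscr{X}^{\pi^{*}}=\mscr{X}^{\pi^{*}}=\mscr{X}^{\pi}$.

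For $(\Rightarrow)$ I would organize the argument by induction on $\rank(\rho)$. Assume $\pi\leq\rho$ and $\pi\neq\rho$; then $\mscr{O}^{\pi}$ has strictly smaller dimension, so $\rho$ is not the closed minimal orbit and is therefore reached from below in the weak order: there is a simple root $\alpha$ and a $\mu$-involution $\rho^{*}\neq\rho$ with $\rho=s_{\alpha}\cdot\rho^{*}$. By the raising lemma $\mscr{X}^{\rho}=P_{\alpha}\cdot\mscr{X}^{\rho^{*}}$ is swept out by the $\mbb{P}^{1}\cong P_{\alpha}/B$ worth of translates of $\mscr{X}^{\rho^{*}}$, so writing $\mscr{X}^{\rho^{*}}=\bigcup_{\nu\leq\rho^{*}}\mscr{O}^{\nu}$ gives $\mscr{X}^{\rho}=\bigcup_{\nu\leq\rho^{*}}P_{\alpha}\cdot\mscr{O}^{\nu}$. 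As $\mscr{O}^{\pi}$ is a single $B$-orbit inside this union, it is a $B$-orbit of some sweep $P_{\alpha}\cdot\mscr{O}^{\pi^{*}}$ with $\pi^{*}\leq\rho^{*}$. I would then invoke the local classification of the $B$-orbits lying in a single minimal-parabolic sweep (the ``type'' of $\pi^{*}$ relative to $\alpha$, following Richardson--Springer and Brion): each such orbit $\mscr{O}^{\pi}$ satisfies $s_{\alpha}\cdot\pi=s_{\alpha}\cdot\pi^{*}$, and the dense orbit of the sweep is $s_{\alpha}\cdot\pi^{*}$. When $\mscr{O}^{\pi}$ is that dense orbit we are done with $\pi=s_{\alpha}\cdot\pi^{*}$; the remaining task is to handle the case in which $\mscr{O}^{\pi}$ is a proper sub-orbit of the sweep.

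The main obstacle is precisely this last case: a proper sub-orbit $\pi$ of $P_{\alpha}\cdot\mscr{O}^{\pi^{*}}$ is never of the form $s_{\alpha}\cdot(\,\cdot\,)$ for the chosen $\alpha$ (its $s_{\alpha}$-image is the strictly larger dense orbit), so one cannot match the single root $\alpha$ to both $\pi$ and $\rho$ naively. The resolution---the technical heart of Timashev's recursion---is that a dense orbit $\rho$ obtained by raising admits several raising roots, and for at least one such choice $\alpha$ the orbit $\pi$ is simultaneously realized as $s_{\alpha}\cdot\pi^{*}$ with $\pi^{*}\leq\rho^{*}$. I would establish this by a case analysis on the joint type of $\pi$ and $\rho$ with respect to $\alpha$ (raised, fixed, or lowered), using the $\mbb{P}^{1}$-fibration of the sweep to push $\mscr{O}^{\pi}$ into $\mscr{X}^{\rho^{*}}$ whenever $s_{\alpha}$ fixes $\pi$, and appealing to the Richardson--Springer and Brion classification together with Timashev's general spherical-variety statement to settle the split configurations; the induction on $\rank(\rho)$ then closes the argument.
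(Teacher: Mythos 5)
The paper offers no actual proof of Proposition \ref{prop:recursive}; its entire justification is the sentence ``The proposition follows from \S 2.9 of Timashev,'' so you are attempting strictly more than the authors do. Your raising lemma and the $(\Leftarrow)$ direction are correct, and the first half of your $(\Rightarrow)$ argument is sound: choosing $\alpha$ and $\rho^{*}$ with $\rho=s_{\alpha}\cdot\rho^{*}\neq\rho^{*}$, writing $\mscr{X}^{\rho}=P_{\alpha}\cdot\mscr{X}^{\rho^{*}}=\bigcup_{\nu\leq\rho^{*}}P_{\alpha}\cdot\mscr{O}^{\nu}$, and locating $\mscr{O}^{\pi}$ inside some single sweep $P_{\alpha}\cdot\mscr{O}^{\pi^{*}}$ with $\pi^{*}\leq\rho^{*}$ is all correct.

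The gap is the final step, and it cannot be repaired in the form you propose. Since $s_{\alpha}\cdot\pi^{*}$ is by definition the dense $B$-orbit of its $P_{\alpha}$-sweep, it is fixed by $s_{\alpha}$ in the monoid; hence $\pi$ can be written as $s_{\alpha}\cdot\pi^{*}$ only if $s_{\alpha}\cdot\pi=\pi$. Your hope that some \emph{other} raising root of $\rho$ will do this fails already for $\pi=\min$: take $\mu=(1,\dots,1)$, so that $\mscr{O}^{\min}$ is the zero-dimensional orbit $\{eB\}$ in $G/B$ and $P_{\alpha}\cdot eB\cong\mbb{P}^{1}$ strictly contains it for \emph{every} $\alpha$, whence $s_{\alpha}\cdot\min\neq\min$ for all $\alpha$. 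On the other hand $s_{\alpha}\cdot\pi^{*}=\min$ forces $\pi^{*}=\min$ (the action cannot increase $L_{\mu}$ and $\min$ maximizes it) and then $s_{\alpha}\cdot\min=\min$, a contradiction. So for $\min<\rho$ no choice of $\alpha$ satisfies clause (ii) as literally written, and the case analysis you defer to Richardson--Springer, Brion and ``Timashev's general spherical-variety statement'' cannot close the argument --- the last appeal is in any case circular, since the proposition \emph{is} that statement specialized to $\mcal{X}_{n}$. What your argument actually establishes is Timashev's criterion in its correct form, in which the requirement on the $\pi$-side is the weaker containment $\mscr{O}^{\pi}\subseteq P_{\alpha}\cdot\mscr{O}^{\pi^{*}}$ (membership in the sweep rather than density); your intermediate step produces exactly this, and with that formulation the induction on $\rank(\rho)$ closes cleanly. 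The right move is therefore to prove the containment version and flag the discrepancy with the printed statement, rather than to chase the equality $\pi=s_{\alpha}\cdot\pi^{*}$, which is strictly stronger and fails in general.
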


The proposition follows from \S 2.9 of~\cite{Timashev94}. The covering relations have the following 
concrete description. Let $\pi, \rho$ be two $\mu$-involutions then $\pi \lessdot \rho$ is a covering relation if and
only if there exists a permutation $w \in S_n$, a simple transposition $s_\alpha \in S_{n}$ and two $\mu$-involutions $\pi^*, \rho^*$ satisfying all of the following conditions.
\begin{enumerate}[label={(\roman*)}]
\item $\pi = w \cdot \pi^*$;
\item $\rho = w \cdot \rho^*$;
\item compatibility with the length function $L_{\mu}$ and $\ell$:
  \[L_{\mu}(\pi) = L_{\mu}(\pi^*) + \ell(w), \; L_{\mu}(\rho) = L(\rho^*) + \ell(w) \; \text{ and } \]
\item a weak covering relation $\rho^* = s_{\alpha} \cdot \pi^*$.
\end{enumerate}

\begin{rem}\label{R:cex}
  This description above is concrete but it is not well suited for computations. As we observe below the poset structure
  for general compositions can be very different than special ones. 

  Consider two extreme compositions: $\mu = (1,1,\dots,1)$ and $\mu = (n)$. In the first case, $\mu$-involutions are the
  same as elements of $S_n$  and in the second case they are the involutions in $S_{n}$. The restriction of the Bruhat order on
  $S_{n}$  to involutions and the Bruhat order on involutions agree (see \cite{Incitti04}). 

  This fails for a general $\mu$-involution. A $\mu$-involution is easily identified with a permutation in $S_n$ -
  in one-line notation this is simply the concatenation of the underlying components of $\mu$. But the Bruhat order in
  the $\mu$-involutions differs from the restriction of the Bruhat order on $S_{n}$. The former must be graded,see
 \cite{RennerBook}, but as illustrated in Figure~\ref{fig:notgraded} in the case of $S_{4}$ and $\mu = (3,1)$, the
  latter is not always graded. Consider the interval from $[432|1]$ to $[321|4]$ in the bottom right portion of the figure.
\begin{figure}[htp]
\centering
\begin{tikzpicture}[thick,scale=0.35, every node/.style={scale=0.65}]
  \begin{scope}[shift={(9,0)}]
\node [shape=rectangle,draw,fill=white] at (0,0) (a) {$[432|1]$};
\node [shape=rectangle,draw,fill=white]  at (-5,5) (b1) {$[243|1]$};
\node [shape=rectangle,draw,fill=white]  at (0,5) (b2) {$[324|1]$};
\node [shape=rectangle,draw,fill=white]  at (5,5) (b3) {$[431|2]$};
\node [shape=rectangle,draw,fill=white]  at (-7.5,10) (c1) {$[143|2]$};
\node [shape=rectangle,draw,fill=white]  at (-2.5,10) (c2) {$[234|1]$};
\node [shape=rectangle,draw,fill=white] at (2.5,10) (c3) {$[314|2]$};
\node [shape=rectangle,draw,fill=white]  at (7.5,10) (c4) {$[421|3]$};
\node [shape=rectangle,draw,fill=white]  at (-7.5,15) (d1) {$[134|2]$};
\node [shape=rectangle,draw,fill=white]  at (-2.5,15) (d2) {$[142|3]$};
\node [shape=rectangle,draw,fill=white]  at (2.5,15) (d3) {$[214|3]$};
\node [shape=rectangle,draw,fill=white]  at (7.5,15) (d4) {$[321|4]$};
\node [shape=rectangle,draw,fill=white]  at (-5,20) (e1) {$[124|3]$};
\node [shape=rectangle,draw,fill=white]  at (0,20) (e2) {$[132|4]$};
\node [shape=rectangle,draw,fill=white]  at (5,20) (e3) {$[213|4]$};
\node [shape=rectangle,draw,fill=white]  at (0,25) (f) {$[123|4]$};
\draw[-,thick] (a) to (b1);
\draw[-,thick] (a) to (b2);
\draw[-,thick] (a) to (b3);
\draw[-,thick] (b1) to (c1);
\draw[-,thick] (b1) to (c2);
\draw[-,thick] (b1) to (c4);
\draw[-,thick] (b2) to (c2);
\draw[-,thick] (b2) to (c3);
\draw[-,thick] (b3) to (c1);
\draw[-,thick] (b3) to (c3);
\draw[-,thick] (b3) to (c4);
%\draw[-,thick] (b2) to (d4);
\draw[-,thick] (c1) to (d1);
\draw[-,thick] (c1) to (d2);
\draw[-,thick] (c2) to (d1);
\draw[-,thick] (c2) to (d3);
\draw[-,thick] (c2) to (d4);
\draw[-,thick] (c3) to (d1);
\draw[-,thick] (c3) to (d3);
\draw[-,thick] (c3) to (d4);
\draw[-,thick] (c4) to (d2);
\draw[-,thick] (c4) to (d3);
\draw[-,thick] (c4) to (d4);
\draw[-,thick] (d1) to (e1);
\draw[-,thick] (d1) to (e2);
\draw[-,thick] (d2) to (e1);
\draw[-,thick] (d2) to (e2);
\draw[-,thick] (d3) to (e1);
\draw[-,thick] (d3) to (e3);
\draw[-,thick] (d4) to (e2);
\draw[-,thick] (d4) to (e3);
\draw[-,thick] (e1) to (f);
\draw[-,thick] (e2) to (f);
\draw[-,thick] (e3) to (f);
\end{scope}
\qquad
\begin{scope}[shift={(-9,0)}]
\node [shape=rectangle,draw,fill=white] at (0,0) (a) {$[432|1]$};
\node [shape=rectangle,draw,fill=white]  at (-5,5) (b1) {$[243|1]$};
\node [shape=rectangle,draw,fill=white]  at (0,5) (b2) {$[324|1]$};
\node [shape=rectangle,draw,fill=white]  at (5,5) (b3) {$[431|2]$};
\node [shape=rectangle,draw,fill=white]  at (-7.5,10) (c1) {$[143|2]$};
\node [shape=rectangle,draw,fill=white]  at (-2.5,10) (c2) {$[234|1]$};
\node [shape=rectangle,draw,fill=white] at (2.5,10) (c3) {$[314|2]$};
\node [shape=rectangle,draw,fill=white]  at (7.5,10) (c4) {$[421|3]$};
\node [shape=rectangle,draw,fill=white]  at (-7.5,15) (d1) {$[134|2]$};
\node [shape=rectangle,draw,fill=white]  at (-2.5,15) (d2) {$[142|3]$};
\node [shape=rectangle,draw,fill=white]  at (2.5,15) (d3) {$[214|3]$};
\node [shape=rectangle,draw,fill=white]  at (7.5,15) (d4) {$[321|4]$};
\node [shape=rectangle,draw,fill=white]  at (-5,20) (e1) {$[124|3]$};
\node [shape=rectangle,draw,fill=white]  at (0,20) (e2) {$[132|4]$};
\node [shape=rectangle,draw,fill=white]  at (5,20) (e3) {$[213|4]$};
\node [shape=rectangle,draw,fill=white]  at (0,25) (f) {$[123|4]$};
\draw[-,thick] (a) to (b1);
\draw[-,thick] (a) to (b2);
\draw[-,thick] (a) to (b3);
\draw[-,thick] (b1) to (c1);
\draw[-,thick] (b1) to (c2);
\draw[-,thick] (b2) to (c2);
\draw[-,thick] (b2) to (c3);
\draw[-,thick] (b3) to (c1);
\draw[-,thick] (b3) to (c3);
\draw[-,thick] (b3) to (c4);
\draw[-,thick] (b2) to (d4);
\draw[-,thick] (c1) to (d1);
\draw[-,thick] (c1) to (d2);
\draw[-,thick] (c2) to (d1);
\draw[-,thick] (c2) to (d3);
\draw[-,thick] (c3) to (d1);
\draw[-,thick] (c3) to (d3);
\draw[-,thick] (c4) to (d2);
\draw[-,thick] (c4) to (d3);
\draw[-,thick] (c4) to (d4);
\draw[-,thick] (d1) to (e1);
\draw[-,thick] (d1) to (e2);
\draw[-,thick] (d2) to (e1);
\draw[-,thick] (d2) to (e2);
\draw[-,thick] (d3) to (e1);
\draw[-,thick] (d3) to (e3);
\draw[-,thick] (d4) to (e2);
\draw[-,thick] (d4) to (e3);
\draw[-,thick] (e1) to (f);
\draw[-,thick] (e2) to (f);
\draw[-,thick] (e3) to (f);
\end{scope}
\end{tikzpicture}
\caption{The left hand side depicts the poset of $(3,1)$-involutions with induced ordering from $S_{4}$. It is not
  graded. The right hand side depicts the geometric ordering on $(3,1)$-involutions. It is graded.}
\label{fig:notgraded}
\end{figure}
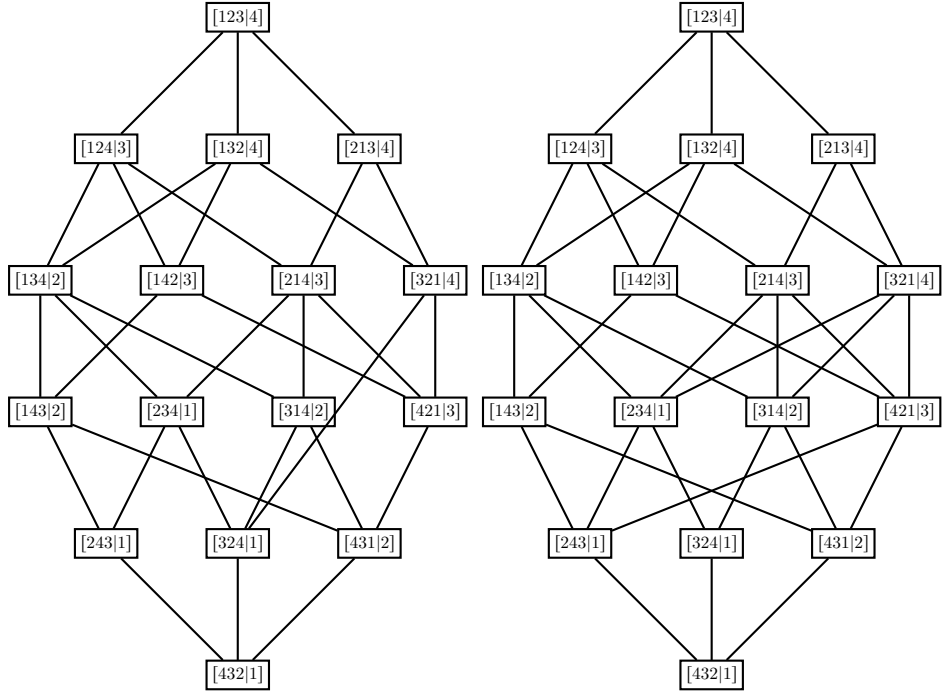

\end{rem}

\subsection{Geometric order on $\mu$-involutions for different compositions $\mu$}\label{SS:InclusiononallB}
We begin with some general remarks that apply for arbitrary connected, reductive algebraic group $G$ and a fixed Borel
subgroup $B$. We consider the order relations between two $B$-orbits contained in two different $G$-orbits. We begin by recalling
cancellative group actions on spherical varieties. 

\begin{defn}
The $G$-action on a spherical $G$-variety $X$ is called \emph{cancellative} if for any two distinct $B$-orbit closures
$Y_1$ and $Y_2$  in $X$, and for any  minimal parabolic subgroup $P_{\alpha}$, associated to a simple root $\alpha$ of $G$ (with respect to $B$), such that  
  $P_\alpha \cdot Y_1 \neq Y_1$, $P_\alpha Y_2 \neq Y_2$, we have
 \[ P_\alpha \cdot Y_1 \neq P_\alpha \cdot Y_2.\]
 \end{defn}

\begin{rem}
The $G$-action on flag varieties is cancellative, as is the $G\times G$-action on $G$.
However, the diagonal action of $G = SL_2$ on $\PP^1 \times \PP^1$ is not cancellative, see \cite{Brion98}.
\end{rem}

\begin{prop}\label{prop:cancellative}
      The $SL_{n}$-action on $\mcal{X}_{n}$ is cancellative. 
\end{prop}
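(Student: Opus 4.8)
The plan is to reduce the statement to a criterion that can be checked on the combinatorial parametrization of $B$-orbits by $\mu$-involutions, rather than working with orbit closures directly. The key structural input is the wonderful-compactification picture from \cref{R:Fibstr}: every $G$-orbit $\mcal{O}^S$ is an induced space $G \times_{P_S} \mbb{O}^S$, where $\mbb{O}^S = \prod_i SL_{m_i}/SO_{m_i}$, and likewise for the orbit closures. First I would recall Brion's characterization \cite{Brion98}: the $G$-action on a spherical variety $X$ is cancellative if and only if, for every simple root $\alpha$ and every pair of distinct $B$-orbit closures $Y_1, Y_2$ that are both moved by the minimal parabolic $P_\alpha$, we never have $P_\alpha \cdot Y_1 = P_\alpha \cdot Y_2$. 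The natural tool is the action of the Richardson–Springer monoid $\mcal{M}(S_n)$ introduced in \S\ref{S:orbits}: the operation $P_\alpha \cdot Y$ corresponds precisely to $s_\alpha \star \pi$ on the indexing $\mu$-involution $\pi$, so cancellativity amounts to the injectivity-type statement that $s_\alpha \star \pi = s_\alpha \star \pi'$ with both raising the rank forces $\pi = \pi'$.

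Next I would split into the two cases according to whether $\alpha$ acts within a component of $\pi$ (Case i of the monoid action) or across components (Case ii). In Case i, the action $s_\alpha \cdot \pi$ only modifies a single sub-string $\pi_r$ via the involution formula \cref{eqn:RSaction}, and the other components are untouched; here cancellativity reduces to the known cancellativity for the symmetric variety $SL_m/SO_m$, i.e. to the $\mu=(m)$ case of ordinary involutions, which follows from Richardson–Springer \cite{RS94} since the BC order on $\mcal{I}_m$ is graded and the monoid action there is the familiar one. In Case ii, where $s_\alpha$ swaps letters $i, i{+}1$ lying in distinct components, the action is essentially the $SL_n/B$ flag-variety action on the underlying permutation, whose cancellativity is classical (stated in the remark following the definition). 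The point is that these two mechanisms do not interfere: a given simple root either lands entirely inside one component or strictly across two, never both, so the analysis is genuinely local and the two sub-arguments can be assembled.

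I would then verify that if $s_\alpha \star \pi = s_\alpha \star \pi' =: \rho$ with $\rho \neq \pi$ and $\rho \neq \pi'$ (the nondegeneracy hypotheses $P_\alpha Y_i \neq Y_i$), then recovering $\pi$ from $\rho$ is forced: in Case i the fiber of $s_\alpha$ over $\rho$ in the RS-monoid action on involutions is a single orbit, so $\pi = \pi'$; in Case ii likewise the preimage under the transposition swap is determined once we know the relative order of $i, i{+}1$ was reversed. The main obstacle I anticipate is the bookkeeping at the interface of the two cases — specifically, ruling out that two distinct $\mu$-involutions, one raised by an \emph{intra}-component move and the other by an \emph{inter}-component move, could collide under $P_\alpha$. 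This cannot happen because the composition type $\mu$ is preserved by the monoid action (the bars do not move under $s_\alpha$), so $\pi$ and $\pi'$ already share the same $\mu$, and for that fixed $\mu$ a simple root $\alpha = (i,i{+}1)$ is of exactly one of the two types; thus the collision must occur within a single case, where injectivity has already been established. Assembling these observations gives cancellativity.
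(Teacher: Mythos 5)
Your route is genuinely different from the paper's. The paper argues geometrically, by induction on $n$: it reduces cancellativity to the open orbit $\mcal{X}^{0}$ (where it invokes the known cancellativity of the weak order on ordinary involutions) and to the boundary strata, and for the latter it uses the induced-variety structure $\mcal{X}^{\mu} \cong G \times_{P_{\mu}} \mbb{X}^{\mu}$ of \cref{R:Fibstr} together with Lemma 1.2 of \cite{Brion98} to pass to the Levi action on a product of lower-rank varieties of complete quadrics, where the inductive hypothesis and the stability of cancellativity under direct products finish the job. You instead work entirely inside the combinatorial model, reformulating cancellativity as injectivity of $\pi \mapsto s_\alpha \cdot \pi$ on the set of $\mu$-involutions actually moved by $s_\alpha$, and checking this from the explicit formulas. (A notational quibble: the raising operation $P_\alpha \cdot Y$ corresponds to the paper's $s_\alpha \cdot \pi$, not $s_\alpha \star \pi$.) What your approach buys is a self-contained, finitely checkable computation that avoids the geometric induction; its cost is that it silently relies on the combinatorial RS-monoid action on $\mu$-involutions faithfully computing the geometric operation $P_\alpha \cdot Y$, which the paper asserts with a citation but which is itself established by the same sort of local analysis. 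Both proofs treat the open-orbit case $\mu=(n)$ as known input.

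One step is justified incorrectly. You rule out a collision between an intra-component move and an inter-component move by claiming that for a fixed $\mu$ a simple root $\alpha=(i,i+1)$ is of exactly one of the two types. That is false: whether $i$ and $i+1$ lie in the same component depends on the alphabets of the particular $\mu$-involution, not only on $\mu$. For instance, with $\mu=(2,1)$ the letters $1,2$ share a component in $[21|3]$ but not in $[13|2]$, so $s_1$ acts by Case (i) on the first and by Case (ii) on the second. The conclusion you want is still true, but for a different reason: the image $\rho=s_\alpha\cdot\pi$ remembers which case occurred, since an intra-component move preserves the alphabets (so $i,i+1$ lie in a common component of $\rho$), while an inter-component move swaps $i$ and $i+1$ between two components (so they lie in distinct components of $\rho$); hence a single $\rho$ cannot arise from both mechanisms. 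With that repair, and with the injectivity inside Case (i) verified from \cref{eqn:RSaction} --- the only delicate point being that $\rho=s_i\sigma s_i$ with $\ell(\rho)=\ell(\sigma)-2$ and $\rho=s_i\sigma'$ with $s_i\sigma's_i=\sigma'$ cannot both occur, because the second forces $s_i\rho=\rho s_i$ and hence $\sigma=s_i\rho s_i=\rho$, contradicting $\rho\neq\sigma$ --- your argument goes through.
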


\begin{proof}
  We set $G= SL_{n}$. The proof will use induction on $n$. The case of $n=1$ is clear because it is vacuously true. 

     In general, suppose  $Y$ is any $B$-orbit closure in $\mcal{X}_{n}$. Then there are two possibilities. 
     \begin{enumerate}[label={(\roman*)}]
      \item The intersection of $Y$ with the dense open $G$-orbit $\mcal{X}^{0}$ is nonempty. In this case, the intersection
            $Y \cap \mcal{X}^{0}$ is open dense and $B$-stable in $Y$.
          \item $Y$ is contained in the boundary $\mcal{X}_{n}\setminus \mcal{X}^{0}$.
          \end{enumerate}
          
   The group action commutes with taking closures. So to show that the action of $G$ on $\mcal{X}_{n}$ is cancellative it
     suffices to show that the action of $G$ on $\mcal{X}^{0}$ is cancellative, as well as the action of $G$ in each stratum $\mcal{X}^{\mu}$,
     where $\mu$ varies over compositions of $n$ with more than one part.

      In the first case, the weak order on the set of involutions is cancellative so the action of $G$ on $\mcal{X}^{0}$ is cancellative as well.  In the second case, we recall (see \cref{R:Fibstr}) that we have $G$-equivariant isomorphisms $\mcal{X}^{\mu} \cong G \times_{P_{\mu}} \mbb{X}^{\mu}$. Using Lemma
      1.2 of \cite{Brion98} it suffices to show that the the $L_{\mu}^{ss}$ action on $\mbb{X}^{\mu}$ is
      cancellative. But $L_{\mu}^{ss}$ is a product of $SL_{m}$ for $m \leq n$ and $\mbb{X}^{\mu}$ is a product of
      smaller rank symmetric spaces of same type. Direct product of cancellative action remains cancellative so the 
      proposition follows from the inductive hypothesis. 

\end{proof}

Cancellativeness is useful in the study of $\WW$-sets. In the context of complete quadrics, 
consider the intersection of of a $B$-stable subvariety $\mscr{X}^{\pi}$ and $G$-stable subvariety $\mcal{X}^{\mu}$,
where $\pi$ is a $\nu$-involution and $\mu$ is not necessarily not equal to $\nu$. Then the  decomposition of
$\mscr{X}^{\pi} \cap \mcal{X}^{\mu}$ into irreducible components is given by 
intersection \begin{equation}\label{E:brion-str-res}
  \mscr{X}^{\pi} \cap \mcal{X}^{\mu} = \bigcup\limits_{\gamma} \mscr{X}^{\gamma},
\end{equation}
where $\gamma$ runs over the set of all $\mu$-involutions such that the $\WW$-sets $\WW(\rho) \subset \WW(\pi)$ and
$L_{\mu}(\gamma) = L_{\nu}(\gamma)$; see \cite[Theorem 1.4]{Brion98}.   

\begin{lema}\label{T:W-setcriterion}
Let $\pi$ be a $\mu$-involution and $\rho$ be a $\nu$-involution and assume $\nu \precneq \mu$.
Then $\rho \leq \pi$ if and only if there exists a $\nu$-involution $\gamma$ with
$W(\gamma) \subseteq W(\pi)$ and
$\rho \leq \gamma$.
\end{lema}

\begin{proof}
      It is clear that if $\WW(\gamma) \subset \WW(\pi)$, for some $\nu$-involution $\gamma$, then $\mscr{X}^{\gamma}$ is contained in
      the intersection $\mscr{X}^{\pi} \cap \mcal{X}^{\nu}$, and hence in  $\mscr{X}^{\gamma} \subset \mscr{X}^{\pi}$). Moreover $\rho \leq \gamma$ so $\mscr{X}^{\rho} \subset \mscr{X}^{\gamma}$. This is proves the sufficiency. 

      On the other hand, from \cref{E:brion-str-res}, the intersection $\mscr{X}^{\pi} \cap \mcal{X}^{\mu}$ is a
      union of $\mscr{X}^{\gamma}$ such that $W(\gamma) \subset W(\pi)$. So if $\rho \leq \pi$, i.e. $\mscr{X}^{\rho} \subset \mscr{X}^{\pi}$, then clearly $\rho \subset \gamma$,
      for some $\nu$-involution $\gamma$ and $\WW(\rho) \subset \WW(\pi)$.  
\end{proof}

In Timashev's characterization, one starts with weak order covering relations in a
fixed $G$-orbit and then builds covering relations going `upward'. We construct a new order, based on the same principle, but going in the opposite direction i.e., starting from an opposite weak covering and moving `downward'
recursively.

More precisely consider the relation $\lessdot_{r}$ on the set of $\mu$-involutions defined below. Let $\star$ denote the
opposite action of the RS-monoid on $\mu$-involutions, i.e. $s \star \rho = \pi$ if and only if $s \cdot \pi = \rho$
(see \cref{eqn:RSaction}). Note that this is well defined because the original action of the $RS$-monoid is cancellative.

\begin{defn}\label{D:recursive}
If $\pi$ and $\rho$ are two $\mu$-involutions. Let $\leq$ (without the subscript $r$) denote the Bruhat order.  We
define $\rho \lessdot_{r} \pi$ 
if and only if either $\pi \lessdot_{W} \rho$ in the weak order, or there exist $\mu$-involutions $\pi^*$,
$\rho^*$, $w$ in the RS-monoid, and simple transposition $s$ such that
\begin{itemize}
    \item $\rho = w \star \rho^*$ with $L_\mu(\rho^*) = L_\mu(\rho)+\ell(w)$;
    \item $\pi = w \star \pi^*$ and $L_\mu(\pi^*)=L_\mu(\pi)+\ell(w)$;
\item $\rho^* \lessdot_{r} \pi^{*}$ and $\pi^{*} = s \cdot \rho^{*}$.
    \end{itemize}
\end{defn}

Let us denote the partial order $\leq_{r}$ on the set of $\mu$-involutions such with transitive closure of $\lessdot_{r}$ above. The partial order $\leq_{r}$ is compatible with the $\star$ action
of the RS-monoid (see Definition 5.3 \cite{RS90}). We call the partial order $\leq_{r}$ the \emph{reverse Bruhat order}. It is not clear apriori that the reverse Bruhat order is equal to the opposite Bruhat order and this will be established below. 

\begin{thm}\label{T:reverseBruhat}
      The reverse Bruhat order on $\mu$-involutions is the equal to the opposite of the Bruhat order. 
\end{thm}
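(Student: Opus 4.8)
The plan is to reduce the identity of the two orders to an identity of their covering relations. Both the Bruhat order $\leq$ and the reverse order $\leq_r$ are, by definition, the transitive closures of their elementary relations $\lessdot$ and $\lessdot_r$ on the finite set of $\mu$-involutions, and both are graded by the common rank $\rank(\pi)=L_\mu(\min)-L_\mu(\pi)$: the Bruhat order by hypothesis, and $\leq_r$ because each relation $\lessdot_r$ of Definition~\ref{D:recursive} changes $L_\mu$ by exactly one (the weak-covering clause directly, and the recursive clause by transporting a rank-one inner covering along a length-additive monoid element $w$, which preserves the rank difference). Since a finite poset is the transitive closure of its covering relation, it therefore suffices to prove that the two covering relations are reverse to one another,
\[
\rho \lessdot_r \pi \iff \pi \lessdot \rho .
\]

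For the base case, a genuine weak covering $\pi\lessdot_W\rho$ is automatically a Bruhat covering: sweeping by the minimal parabolic $P_\alpha$ enlarges the orbit closure, so $\mscr{X}^{\pi}\subseteq\mscr{X}^{\rho}$, and since both orders are graded by the same rank a rank-one weak relation is a rank-one Bruhat relation. This is precisely the first clause of Definition~\ref{D:recursive}. The mechanism behind the recursive clause is that the opposite action $\star$ is well defined, which is exactly what cancellativeness (Proposition~\ref{prop:cancellative}) provides: when $P_\alpha$ moves a $B$-orbit, cancellativeness makes that move injective on the moved orbits, so each orbit has a unique $\star$-preimage and the bookkeeping $L_\mu(s\star\rho)=L_\mu(\rho)+1$ is unambiguous. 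This is what lets one invert Timashev's upward transport (by the genuine action $\cdot$) into the downward transport (by $\star$) used to build $\leq_r$.

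With $\star$ available I would match the two descriptions by induction on $\ell(w)$, equivalently on rank, peeling off one generator of $w$ at a time. Given a Bruhat covering $\pi\lessdot\rho$, the covering-relation description following Proposition~\ref{prop:recursive} supplies data $(w,s_\alpha,\pi^*,\rho^*)$ with $\pi=w\cdot\pi^*$, $\rho=w\cdot\rho^*$, the matching length identities, and a weak covering relating $\pi^*$ and $\rho^*$ through $s_\alpha$. Using the defining adjunction of $\star$, namely $w\cdot\pi^*=\pi\iff\pi^*=w\star\pi$, one rewrites this data as $\pi^*=w\star\pi$, $\rho^*=w\star\rho$, which is exactly the transport data witnessing $\rho\lessdot_r\pi$ in Definition~\ref{D:recursive}; the argument is reversible, so a Timashev covering chain converts into a $\lessdot_r$-chain and back. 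Here the order-compatibility of the monoid action established by Richardson--Springer (Definition~5.3 of \cite{RS90}), with which $\leq_r$ is compatible by construction, ensures that the transport genuinely preserves covering relations rather than merely relating comparable elements; the $\WW$-set calculus of \cite{Brion98} can be invoked as an alternative if more explicit control of the transport is wanted.

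I expect the main obstacle to be the idempotent case of the monoid action, where a generator $s_\alpha$ fixes a $\mu$-involution (the last alternative of \eqref{eqn:RSaction}, $s_\alpha\cdot\pi=\pi$) instead of moving it. At such steps the upward and downward transports are no longer mutually inverse bijections, so one must verify, using cancellativeness together with the length identities, that the fixed and moved subcases are bookkept so that the translation between the two recursions neither manufactures nor destroys a covering relation. Once this is checked, the equivalence $\rho\lessdot_r\pi\iff\pi\lessdot\rho$ is complete, and by the grading reduction of the first paragraph the reverse Bruhat order coincides with the opposite of the Bruhat order.
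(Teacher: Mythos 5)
Your reduction to covering relations and your identification of cancellativity (Proposition~\ref{prop:cancellative}) as the ingredient that makes the opposite action $\star$ well defined are both correct, and they match the setup of the paper's proof. The gap is in the central step, where you claim that Timashev's covering data $(w,s_\alpha,\pi^*,\rho^*)$ converts into the data witnessing $\rho\lessdot_{r}\pi$ by a formal rewriting via the ``adjunction'' $w\cdot\pi^*=\pi\iff\pi^*=w\star\pi$. That adjunction is not valid in the form you use it: the generators of the RS-monoid are idempotent, so $w\cdot(-)$ is not injective and $w\cdot\pi^*=\pi$ does not determine $\pi^*$; more importantly, Timashev's recursion only forces the transport to be effective on the $\rho$-side (the condition is $\rho^*\neq\rho$), while on the $\pi$-side a generator may fix the orbit, in which case $s\star\pi$ need not return $\pi^*$ and the two recursions (upward transport of a weak cover versus downward transport by $\star$) genuinely diverge. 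You correctly flag this idempotent case as ``the main obstacle'' and then assert it can be bookkept away, but this is exactly where the content of the theorem lives, and your proposal supplies no mechanism for resolving it.

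The paper resolves it with a tool absent from your argument: after descending via Timashev's recursion to a weak cover $s_m\cdot\pi_m^*=\rho_m^*$, it concatenates the descending word with a reduced word for an element $\varpi\in\WW(\pi)$ to obtain an element of $\WW(\pi_m^*)$, and then invokes the Richardson--Springer exchange property (Property 5.12(e) of \cite{RS90}) in the opposite order to produce an element of $\WW(\rho_m^*)$ obtained by deleting a single letter from that word. The length constraint shows the deleted letter cannot lie in the descending part, and the case analysis on where it sits in the $\varpi$-part drives the induction on rank that yields the reverse-order covering. Without the exchange property, or some equivalent control of how the chains from $\pi$ and from $\rho$ up to $\max$ differ by one letter, the passage from the upward recursion (Timashev) to the downward one (Definition~\ref{D:recursive}) cannot be completed; this is the step your proposal would need to add.
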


\begin{proof}
  We will show that given two $\mu$-involutions $\rho$ and $\pi$ if $\pi \lessdot \rho$ (i.e. $\rho$ covers $\pi$ in the
  Bruhat order) then $\rho \lessdot_{r} \pi$ is a covering relation in the reverse Bruhat order and
  vice-versa. Throughout this proof We will use $\max$ (resp. $\min$) as the maximum and minimum element of the
  $\mu$-involutions in the Bruhat order. The strategy of proof is to systematically apply Timashev's recursive
  characterization while keeping the while moving along chains; we believe the depiction in Figure~\ref{F:Depiction}
  will aid the reader through the proof.
  
  Concretely, We want to show that if $\pi \lessdot \rho$ then $\pi \lessdot_{r} \rho$.
  Suppose $\pi= \max$. Then $\rho$ must also be $\max$ and there is nothing to prove. So, inductively we may
  assume that the hypothesis is true for all $\mu$-involutions with rank $ \geq \aleph + 1$ ( with respect to the rank
  function (\ref{eqn:mu-invlength}). We consider the
  case when $L_{\mu}(\pi) = \aleph$.

We define recursively $\mu$-involutions $\pi_{k}^{\ast}$, $\rho_{k}^{\ast}$, and simple reflections $s_{i_{k+1}}$ in $\mcal{M}(S_{n})$
(the RS-monoid). 
\begin{enumerate}[label={(\roman*)}]

\item We set $\pi_{0}^{\ast} = \pi$, $\rho_{0}^{\ast} = \rho$. Clearly $ \pi_{0}^{\ast} \lessdot \rho_{0}^{\ast}$ and if
  moreover it is a weak order cover then we must have for some simple reflection $s_{0}$ such that
  $s_{0} \cdot \pi_{0}^{\ast} = \rho_{0}^{\ast}$ and we set $s_{i_{1}} = 1$ and terminate.
  
\item At stage $k \geq 0$, $\pi_{k}^{\ast}$ and $\rho_{k}^{\ast}$ are given such that $\pi_{k}^{\ast} \lessdot
  \rho_{k}^{\ast}$ and if moreover there is a weak order cover then we must have simple reflection $s_{k}$ such that $s_{k} \cdot \pi_{k}^{\ast} = \rho_{k}^{\ast} $. We set $s_{i_{k+1}} = 1$ and  and terminate.
  
\item Otherwise we let $s_{i_{k+1}}$ denote a simple reflection such that $\pi^{\ast}_{k+1} = s_{i_{k+1}}\cdot \pi^*_k$
  and $\pi^*_{k+1} \neq \pi_k$; \; $\rho_{k+1}^{\ast} = s_{i_{k+1}} \cdot \rho^*_k$ and $\rho^*_{k+1}\neq
  \rho^{\ast}_{k}$ and repeat the previous step. The existence of $s_{i_{k+1}}$ is guaranteed by Timashev's characterization. 

\end{enumerate}

Let us suppose that, for the given $\rho$ and $\pi$ the algorithm terminates after $m$-steps i.e., we have a weak order covering relation
 $\pi_{m}^{\ast} \lessdot \rho_{m}^{\ast}$ (in the Bruhat order) and a simple reflection $s_{m}$ such that $s_{m} \cdot
 \pi_{m}^{\ast} = \rho_{m}^{\ast}$.  

 Let $\varpi$ be an element of the $\WW$-set of $\WW(\pi)$ and we fix a reduced expression $ \varpi = s_{j_1} s_{j_2}\cdots
 s_{j_k}$. The co-rank of $\pi$ must be $k$. 

Consider the element in the RS-monoid given by
\begin{equation} \label{E:omit-one}
w \defeq s_{j_1} \cdots s_{j_k} \cdot s_{i_m} \cdots s_{i_1}.
\end{equation}
The element $w$, by construction, is a member of the $\WW$-set
$\WW(\pi_{m}^{\ast})$ and we also have $s_m \cdot \pi_{m}^{\ast} = \rho_m^*$. 

In the opposite Bruhat order the action of the RS-monoid gives us the following equations.
\begin{align*}
  s_{m} \star \rho^{*}_{m} &= \pi^{*}_{m} & \mtxt{ (by definition)} \\
  w \star \max &= \pi_{m}^{*}  & \mtxt{ (by definition).}
\end{align*}

We apply the exchange property, \cite[PROPERTY 5.12(e)]{RS90}, to the opposite Bruhat order. In the notation of \emph{loc. cit.} we let
$x=\pi_m^*$, $y=\rho_m^*$, $s=s_m$ and we conclude that there is an element $w^{\prime}$ in the RS-monoid with the following properties.
\begin{itemize}
\item $w^{\prime} \star \max = \rho_{m}^{\ast}$;
  %% We are working in the opposite order
  \item $w^{\prime} = s_{j_1} \cdots \widehat{s_{\alpha}} \cdots s_{j_{k}} s_{i_m} \cdots s_{i_1} \cdots s_{m}$ where
    $\widehat{s_{\alpha}}$ is a deleted simple reflection from the expression in \cref{E:omit-one}. 
(In other words we have established that in Figure~\ref{F:Depiction} that $\widetilde{s_{j_l}}=s_{j_l}$ for at-most one $j_l\neq \alpha$.)
  \end{itemize}
  
Note that $s_\alpha$ cannot belong to the set $\{ s_{i_1},\dots, s_{i_m}\}$ because we have the constraint $|L_{\mu}(\rho) -
L_{\mu}(\rho_{m}^{*})| = m
$.
So $s_\alpha$ belongs to the set $\{ s_{j_1},\dots, s_{j_k}\}$. In this case we have new elements $\rho^{\prime}$ and
$\pi^{\prime}$, depending on $\alpha$, defined below (also see Figure \ref{F:Depiction}).

\begin{enumerate}[label={(\alph*)}]
\item \label{I:one} If $\alpha = j_{1}$  then $\pi= \pi^{\prime}$ and $\rho = \rho^{\prime}$ and $\pi^{\prime}
  \lessdot_{r} \rho^{\prime}$ as intended.
  
\item \label{I:three} If $\alpha \neq j_{1}$, then  set $\rho^{\prime} = s_{j_{d-1}}\cdot \ldots \cdot s_{j_{1}} \cdot
  \rho$  and $\pi^{\prime} = s_{j_{d-1}} \cdots s_{j_{1}} \cdot \pi$. The induction hypothesis on rank,
  $L_{\mu}(\pi^{\prime}) > L_{\mu}(\pi)$ implies that $\pi^{\prime} \lessdot_{r} \rho^{\prime}$ and thus $\pi
  \lessdot_{r} \rho$. 
\end{enumerate}

The argument we have used is reversible. This is because they only depend on the abstract properties of RS-monoids and
compatibility of the orderings with the monoid action. So we can repeat it verbatim to show that if $\rho \lessdot_{r} \pi$ then $\pi \lessdot
\rho$.  This proves the result. 
\end{proof}

\begin{figure}[htp]
\centering
\begin{minipage}[c]{1.75 \textwidth}
\begin{tikzpicture}[scale=1.2, every node/.style ={scale=0.6}]

\begin{scope} [rotate = 90]

\node at (-.25,-.25) {$\pi_m^*$};
\node at ( 0 .65, 0.5) {$s_m$};
\node at (1.25,1) {$\rho_m^*$};
\node at (-.25,0.35) {$s_{i_m}$};
\node at (1.25,1.5) {$s_{i_m}$};
\node at (0,0) (a1) {$\bullet$};
\node at (0,1) (a2) {$\bullet$};
\node at (1,1) (b1) {$\bullet$};
\node at (1,2) (b2) {$\bullet$};
\node at (-.75,2) (a3) {$\bullet$};
\node at (0.25,3) (b3) {$\bullet$};
\node at (-.65,1.45) {$s_{i_{m-1}}$};
\node at (.85, 2.56) {$s_{i_{m-1}}$};

\draw[-, thick] (0,0) to (0,1);
\draw[-, thick] (0,0) to (1,1);
\draw[dashed, thick] (0,1) to (1,2);
\draw[dashed, thick] (-.75,2) to (.25,3);
\draw[-,thick] (1,1) to (1,2);
\draw[-, thick] (0,1) to (-.75,2);
\draw[-, thick] (1,2) to (0.25,3);
\draw[dotted, thick] (-0.75,2) to (-0.75,4);
\draw[dotted, thick] (0.25,3) to (.25,5);
\draw[dashed,thick] (0,1) to (1,2);

\node at (-0.75,4) (a5) {$\bullet$};
\node at (.25,5) (b5) {$\bullet$};
\node at (-1,3.75) {$\pi_1^*$};
\node at (.5,5) {$\rho_1^*$};
\node at (-.9,4.4) {$s_{i_{1}}$};
\node at (.5,5.5) {$s_{i_{1}}$};

\draw[dashed,thick] (-.75,4) to (.25,5);
\draw[dashed,thick,red] (-.75,5) to (.25,6);

\node at (-0.75,5) (a6) {$\bullet$};
\node at (.25,6) (b6) {$\bullet$};
\node at (-1,4.75) {$\pi$};
\node at (.5,6) {$\rho$};
\draw[-,thick] (-0.75,4) to (-0.75,5);
\draw[-,thick] (.25,5) to (.25,6);
\node at (-1.2, 5.2) {$s_{j_1}$};
\node at (0,6.7) {$\widetilde{s_{j_1}}$};

\draw[-,thick] (-0.75,5) to (-1.5,6);
\draw[-,thick] (.25,6) to (-.5,7);
\node at (-1.5, 6) {$\bullet$};
\node at (-.5, 7) {$\bullet$};
\draw[dotted, thick] (-1.5,6) to (-1.5,8);
\node at (-.3, 7.6) {$\rho^{\prime}$};
\draw [dashed, thick] (-0.5, 7.5) to (-1.5, 6.5);
\node at (-1.5, 6.5){$\bullet$};
\node at (-1.75, 6.5) {$\pi^{\prime}$};
\draw[dotted, thick] (-.5,7) to (-.5,9);

\node at (-.5, 7.5){$\bullet$};

\node at (-1.5,8) {$\bullet$};
\node at (-.5,9) {$\bullet$};
\draw[-, thick] (-1.5,8) to (-1.5,9);
\node at (-1.5,9) {$\bullet$};
\node at (-1.75,8.5) {$s_{j_{k-1}}$};
\draw[-, thick] (-1.5,9) to (.5,10);
\draw[-, thick] (-.5,9) to (.5,10);
\node at (.5,10) {$\bullet$};
\node at (-.6,9.7) {$s_{j_{k}}$};
\node at (.25,9.4) {$\widetilde{s_{j_{k}}}$};
\node at (.5,10.25) {$\max$};

\node at (0.7, 7.5) {{\small Bruhat order} $\rightsquigarrow$ };

\node at (-2.3, 5.5) { $\leftarrow$ {\small reverse Bruhat order}};
\end{scope}
\end{tikzpicture}
\end{minipage}

\caption{The initial covering relation is depicted in red. The solid lines (and dotted lines) indicate weak order cover relations. The
  dashed lines are Bruhat order covers but they are not necessarily covers of the weak order.}

\label{F:Depiction}
\end{figure}

% \begin{rem}
%   We note that in the light of the previous proposition, for any $\mu$-involution $\pi$, the reverse $\WW$-set
%   $\WW^{-1}(\pi)$ with respect to the Bruhat order (see Definition \ref{def:w-set}) is same as the $\WW$-set of $\pi$ with
%   respect to the reverse Bruhat order.
% \end{rem}

\begin{cor}\label{lem:w1}
Let $\pi$ and $\rho$ be two $\mu$-involutions. Then $\pi$ is covered by $\rho$ in the Bruhat order if and only if given any element
$\varpi \in W^{-1}(\pi)$ then we can find a simple reflection $s_{\alpha}$ and elements $w_1,w_2\in W$ with the 
following property.

\begin{itemize}

\item Factorization: $ \varpi = w_1\cdot w_2$ and $w_1\cdot s_{\alpha}\cdot w_2\in W^{-1}(\rho)$; and

\item length constraint:
  \begin{equation}\label{E:lnth-const}
\ell(w_1\cdot w_2) = \ell(w_1) + \ell(w_2).
\end{equation}
 \end{itemize}
\end{cor}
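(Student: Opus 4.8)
The plan is to deduce this from Theorem~\ref{T:reverseBruhat} by reformulating its exchange-property step in the language of reverse $\WW$-sets. I identify an element $\varpi\in\WW^{-1}(\pi)=\WW(\min,\pi)$ with a reduced word $\varpi=s_{b_1}\cdots s_{b_r}$, where $r=\rank(\pi)$, realizing a saturated weak-order chain from $\min$ up to $\pi$ via the RS-action $\varpi\cdot\min=\pi$. A length-additive factorization $\varpi=w_1w_2$ then splits this chain at the intermediate element $\eta:=w_2\cdot\min$, with $w_1\cdot\eta=\pi$, and the asserted membership $w_1s_{\alpha}w_2\in\WW^{-1}(\rho)$ says precisely that branching off at $\eta$ along a new cover $\eta\lessdot_{W}s_{\alpha}\cdot\eta$ and then applying $w_1$ reaches $\rho$ in place of $\pi$, that is $w_1\cdot(s_{\alpha}\cdot\eta)=\rho$.

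First I would fix a Bruhat cover $\pi\lessdot\rho$ together with an arbitrary $\varpi\in\WW^{-1}(\pi)$. By Theorem~\ref{T:reverseBruhat} this cover coincides with the reverse-Bruhat cover $\rho\lessdot_{r}\pi$, so the recursive structure of Definition~\ref{D:recursive} is available. Applying the exchange-property argument of the proof of that theorem to the chain carried by $\varpi$ (now read from $\min$ upward, rather than from $\pi$ to $\max$) locates the splitting element $\eta$ at which the extra reflection $s_{\alpha}$ must be inserted; recording the position of $\eta$ along $\varpi$ yields the factorization $\varpi=w_1w_2$ with $w_2\cdot\min=\eta$.

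Next I would verify the two conclusions. The length identity $\ell(w_1w_2)=\ell(w_1)+\ell(w_2)$ holds by construction, since it merely records that $w_1$ and $w_2$ arise by cutting a single reduced word for $\varpi$. For the membership $w_1s_{\alpha}w_2\in\WW^{-1}(\rho)$ I would invoke the exchange property \cite[Property~5.12(e)]{RS90}: because $\eta\lessdot_{W}s_{\alpha}\cdot\eta$ is a genuine weak cover and $w_1$ is a reduced witness for the segment $\eta\to\pi$, the word $w_1s_{\alpha}w_2$ is reduced of length $r+1=\rank(\rho)$ and satisfies $(w_1s_{\alpha}w_2)\cdot\min=\rho$; the crux is that the same reduced $w_1$ simultaneously witnesses $\eta\to\pi$ and $(s_{\alpha}\cdot\eta)\to\rho$. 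Throughout, cancellativeness of the $SL_n$-action (Proposition~\ref{prop:cancellative}) is what guarantees that the opposite $\star$-action, and hence every step of the exchange procedure, is unambiguous.

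Finally, the converse and the universal ``for all $\varpi$'' clause are obtained exactly as in the closing paragraph of the proof of Theorem~\ref{T:reverseBruhat}: each manipulation rests only on the abstract exchange property and on compatibility of the orderings with the monoid action, so every step is reversible, and any factorization with $w_1s_{\alpha}w_2\in\WW^{-1}(\rho)$ reconstructs the reverse-Bruhat cover $\rho\lessdot_{r}\pi$, hence the Bruhat cover $\pi\lessdot\rho$. I expect the main obstacle to be precisely this universal clause: one must show that a valid insertion position exists inside \emph{every} reduced word for an element of $\WW^{-1}(\pi)$, not merely for one convenient representative. This is where the deletion/insertion symmetry of the exchange property, combined with cancellativeness, does the essential work, since it permits the recursive construction underlying Theorem~\ref{T:reverseBruhat} to be initialized at an arbitrary prescribed $\varpi\in\WW^{-1}(\pi)$.
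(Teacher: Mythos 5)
Your proposal follows essentially the same route as the paper's own proof: both deduce the statement from Theorem~\ref{T:reverseBruhat}, extract the intermediate element, the factor $w_2$, and the reflection $s_{\alpha}$ from the recursive structure of the reverse Bruhat order (Definition~\ref{D:recursive}) together with the exchange property of \cite{RS90}, obtain the length constraint from the rank function $L_{\mu}$, and handle the converse by reversing the construction through the weak order. If anything, you are more explicit than the paper about the universal quantifier over $\varpi \in \WW^{-1}(\pi)$, which the paper's argument passes over by simply taking $w_1$ to be an arbitrary element of the reverse $\WW$-set of the intermediate involution.
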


\begin{proof}
Let $\min$ (resp. $\max$) denote the minimum
  (resp. maximum) elements of $\mu$-involutions with the Bruhat order.
  
Given a covering relation $\pi \lessdot \rho$ in the Bruhat order, we have $\rho \lessdot_{r}
\pi$ in the reverse. Applying the description of the covering relations in the reverse Bruhat order (see Definition \ref{D:recursive}) we obtain
the following (see Figure \ref{F:Depiction-2} for an illustration).
\begin{enumerate}[label={(\roman*)}]
\item An element $w_{2}\in W$ such that $\rho_{m} \defeq w_{2} \star \rho$, $\pi_{m} \defeq w_{2} \star \pi$, and a
  simple reflection $s_{\alpha}$ such that $s_{\alpha} \star \rho_{m} = \pi_{m}$ (i.e. the covering relation $\rho_{m}
  \lessdot_{r} \pi_{m}$ is a weak covering relation). 

\item Let $w_{1} \in W$ be any element of the $\WW$-set $\WW(\pi_{m})$ in the reverse order. In other words we get
  $w_{1} w_{2} \star \pi = \min$ and $w_{1}s_{\alpha}\cdot w_{2} \star \rho = \min$.
\end{enumerate}
The length constraint is a simple consequence of the properties of the rank (resp. co-rank) function $L_{\mu}$ of the
reverse Bruhat order (resp., Bruhat order).

Conversely, suppose we have elements $w_{1}, w_{2}, s_{\alpha}$ as asserted. We set $\pi_{m}^{\ast} = w_{2} \cdot \min$
and $\rho_{m}^{\ast} = s_{\alpha}\cdot w_{2} \cdot \min$.  Then clearly $\pi_{m}^{\ast} \leq \rho_{m}^{\ast}$ in the
weak order. The recursive definition of the weak order then implies $\pi \leq \rho$. 
\end{proof}

\begin{figure}[htp]
\centering
\begin{minipage}[c]{\textwidth}
\begin{tikzpicture}[scale=1.75, every node/.style={scale=0.9}]
\begin{scope}[rotate = 90]
\draw[-,thick] (0,0) to (0,-0.5);
\node at (0, -0.5) {$\bullet$};
\draw[dotted,thick] (0,-0.5) to (0,-1.5);
\node at (0, -1.5) {$\bullet$};
\draw[-,thick] (0,-1.5) to (0,-2);
\node at (0,-2) {$\bullet$};
\node at (0.5,-1.9) {$\min$};
\node at (-.31,0) {$\pi_m$};
\node at (1.31,1) {$\rho_m$};
\node at (.65,.35) {$s_{m}$};
\node at (0.2 , .55) {$s_{i_m}$};
\node at (1.25,1.5) {$s_{i_m}$};
\node at (0,0) (a1) {$\bullet$};
\node at (0,1) (a2) {$\bullet$};
\node at (1,1) (b1) {$\bullet$};
\node at (1,2) (b2) {$\bullet$};
\node at (-.75,2) (a3) {$\bullet$};
\node at (0.25,3) (b3) {$\bullet$};
\node at (-.6,1.1) {$s_{i_{m-1}}$};
\node at (1.1,2.5) {$s_{i_{m-1}}$};

\draw[-, thick] (0,0) to (0,1);
\draw[-, thick] (0,0) to (1,1);
\draw[dashed, thick] (0,1) to (1,2);
\draw[dashed, thick] (-.75,2) to (.25,3);
\draw[-, thick] (1,1) to (1,2);
\draw[-, thick] (0,1) to (-.75,2);
\draw[-, thick] (1,2) to (0.25,3);
\draw[dotted, thick] (-0.75,2) to (-0.75,3);
\draw[dotted, thick] (0.25,3) to (.25,4);
\draw[dashed, thick] (0,1) to (1,2);

\node at (-0.75,3) (a5) {$\bullet$};
\node at (.25,4) (b5) {$\bullet$};
\node at (-1.1,2.75) {$\pi_1$};
\node at (.6,4) {$\rho_1$};
\node at (-.9,3.4) {$s_{i_{1}}$};
\node at (.5,4.5) {$s_{i_{1}}$};
\draw[dashed, thick] (-.75,3) to (.25,4);
\draw[dashed, thick] (-.75,4) to (.25,5);
\node at (-0.75,4) (a6) {$\bullet$};
\node at (.25,5) (b6) {$\bullet$};
\node at (-1,4.0) {$\pi$};
\node at (.5,5) {$\rho$};
\draw[-, thick] (-0.75,3) to (-0.75,4);
\draw[-, thick] (.25,4) to (.25,5);
\node at (1.25,4.5) {{\small Bruhat order $\rightsquigarrow$ }};
\node at (-1.0 ,  -0.5 ) {{\small $\leftarrow$ reverse Bruhat order}};
\end{scope}
\end{tikzpicture}
\end{minipage}
\caption{The solid and dotted arrows represent weak order covers the dashed arrows represent covering relations.}
\label{F:Depiction-2}
\end{figure}
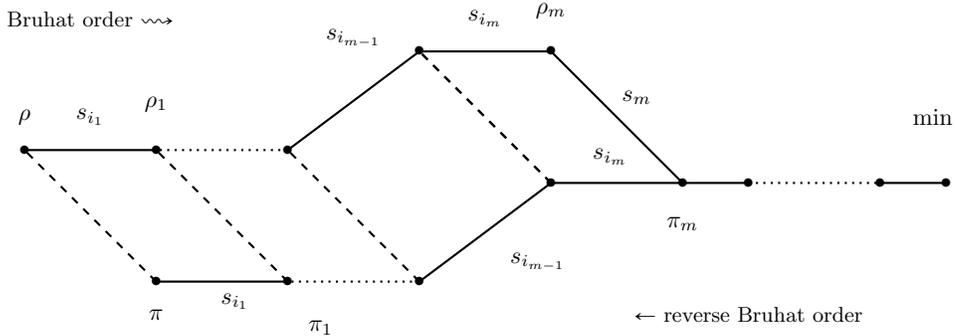

Now we present a complete description of the covering relations in
Bruhat order.
\begin{thm}\label{thm:Bruhat}
Let $\pi$ be a $\mu$-involution and $\rho$ be a $\nu$-involution. Then $\rho$ covers $\pi$ in Bruhat order if and only if one of the following holds:
\begin{enumerate}[label={(\roman*)}]
\item $\mu$ is covered by $\nu$ in the refinement ordering (see Definition \ref{E:refinement-order}) and
  $\WW(\pi) \subset \WW(\rho)$.

\item \label{I:T-2} The compositions $\nu = \mu$. Moreover, there exist a simple reflection $s_{\alpha}$  and an element $\varpi \in W$ such that
  \begin{enumerate}
  \item \label{I:T-21}
    $L_{\mu}(\pi) - L_{\mu}(\varpi \star \pi) = L_{\mu}(\rho) - L_{\mu}( \varpi \star \rho)=\ell(\varpi)$;
  \item \label{I:T-22} $s_{\alpha} \cdot (\varpi \cdot \pi) = \varpi \cdot \rho $ (equivalently in the reverse Bruhat
    order  $s_{\alpha} \star (\varpi \star \rho) = \varpi \star \pi$);
  \item \label{I:T-23}$ s_{\alpha}\WW^{-1}(\varpi \star \pi) \cap \WW^{-1}(\varpi \star \rho) \neq \emptyset$ where
    $s_{\alpha}\WW^{-1}(\varpi \star \pi)$ is the translation by group action i.e.  \[\{ s_{\alpha} w \in W \; | \; w \in
    \WW^{-1}(\varpi \star \pi) \}.\]
  \end{enumerate}
\end{enumerate}
\end{thm}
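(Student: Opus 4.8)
The plan is to split the theorem according to the two types of covering relations the introduction advertised: those where the composition changes (case (i)) and those where the composition is fixed (case (ii)). For case (i), I would first observe that since $\mu$ is covered by $\nu$ in the refinement order, the $G$-orbit closures satisfy $\mathcal{X}^{\mu} \subsetneq \mathcal{X}^{\nu}$ with $\mathcal{X}^{\mu}$ a divisor in $\mathcal{X}^{\nu}$ (a single simple root is added). The $B$-orbit $\mathscr{X}^{\pi}$ lives in $\mathcal{X}^{\mu}$ and $\mathscr{X}^{\rho}$ in $\mathcal{X}^{\nu}$. I would invoke \cref{T:W-setcriterion} to translate the containment $\mathscr{X}^{\pi} \subseteq \mathscr{X}^{\rho}$ into the condition $\WW(\pi) \subseteq \WW(\rho)$ together with $L_{\mu}(\pi) = L_{\nu}(\pi)$; the covering (rather than merely $\leq$) should force the inclusion of $\WW$-sets to be the proper inclusion $\WW(\pi) \subsetneq \WW(\rho)$, matching the statement. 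The key input here is Brion's intersection formula \eqref{E:brion-str-res}, which governs how $\mathscr{X}^{\rho} \cap \mathcal{X}^{\mu}$ decomposes; a covering relation across adjacent $G$-orbits should correspond exactly to $\mathscr{X}^{\pi}$ appearing as a component of this intersection.

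For case (ii), where $\nu = \mu$ is fixed, the plan is to apply \cref{lem:w1} (the corollary just proved) directly. That corollary already characterizes a covering $\pi \lessdot \rho$ with fixed $\mu$ in terms of a factorization $\varpi = w_1 w_2$ of an element of $\WW^{-1}(\pi)$, a simple reflection $s_{\alpha}$, and the length-additivity \eqref{E:lnth-const}. I would rewrite this factorization data in the language demanded by the theorem. Setting $\varpi = w_2$ and reading off the weak covering relation $\rho_m \lessdot_r \pi_m$ established in \cref{lem:w1}, condition \ref{I:T-21} becomes the statement that $\varpi$ realizes simultaneous $\WW^{-1}$-descents of length $\ell(\varpi)$ for both $\pi$ and $\rho$ (this is exactly the length-additivity translated through the rank function $L_{\mu}$). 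Condition \ref{I:T-22} is the weak covering $s_{\alpha} \star (\varpi \star \rho) = \varpi \star \pi$ already present in the corollary. Condition \ref{I:T-23}, that $s_{\alpha}\WW^{-1}(\varpi \star \pi) \cap \WW^{-1}(\varpi \star \rho) \neq \emptyset$, is the precise reformulation of the requirement $w_1 s_{\alpha} w_2 \in \WW^{-1}(\rho)$ once we note $w_1 \in \WW^{-1}(\varpi \star \pi)$ and $w_1 s_{\alpha} \in s_{\alpha}\WW^{-1}(\varpi \star \pi)$ lies in $\WW^{-1}(\varpi \star \rho)$.

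The main obstacle I anticipate is the bookkeeping needed to verify that the weak-order covering relation in \cref{lem:w1} genuinely forces the three clauses of \ref{I:T-2} and conversely, so both implications go through. In particular, the subtle point is the equivalence in \ref{I:T-22} between the $S_n$-action ($s_{\alpha} \cdot$) and the RS-monoid action ($s_{\alpha} \star$); these agree precisely when one is at a genuine weak covering, and I would need the cancellativeness from \cref{prop:cancellative} to ensure $\star$ is well-defined and that the descent encoded by $\varpi$ does not collapse. A secondary technical issue is confirming that clauses \ref{I:T-21} and \ref{I:T-23}, taken together, are not just necessary but sufficient to recover a covering relation; this amounts to checking that the intersection condition \ref{I:T-23} plus the rank equalities rule out the possibility that $\pi < \rho$ with something strictly in between, which I would handle by a rank count using that $L_{\mu}$ is the rank function of the graded Bruhat poset. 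Finally I would remark that for special compositions ($\mu$ with all parts $\leq 2$) the two cases can coexist along a single chain, but the covering relations themselves are cleanly separated by whether $\nu = \mu$ or $\mu \precneq \nu$, so no further compatibility check is required.
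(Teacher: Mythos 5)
Your proposal follows the paper's own proof essentially verbatim: case (i) is dispatched by Lemma \ref{T:W-setcriterion} (with Brion's formula \eqref{E:brion-str-res} in the background), and case (ii) is a direct translation of the factorization data from Corollary \ref{lem:w1} into clauses (a)--(c), exactly as the paper does (the paper calls the lifting element $w_1$ where you call it $w_2$, but that is only the notational swap already present between the statement and the proof of Corollary \ref{lem:w1}). The sufficiency concern you flag is already absorbed by the fact that Corollary \ref{lem:w1} is itself an if-and-only-if characterization of covering relations, so no additional rank count is required.
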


\begin{proof}
The first characterization follows easily from the description of $\WW$-sets and Proposition~\ref{T:W-setcriterion}.

We prove the second characterization. Since $\pi \lessdot \rho$ we have $\rho \lessdot_{r} \pi$. We set $\varpi =
w_{1}$ where $w_{1}, w_{2}, s_{\alpha}$ exist from \cref{lem:w1}. It follows
that Condition (\ref{I:T-21}) is clear. Condition (\ref{I:T-22}) follows because
\[\ell(w_{1}s_{\alpha}w_{2}) \leq \ell(w_{1})+ 1+ \ell(w_{2}) = L_{\mu}(\pi) + 1 = L(\rho),\] so $s_{\alpha} \in
\WW(\varpi \cdot \rho, \varpi \cdot \pi)$. Finally Condition (\ref{I:T-23}) follows because $s_{\alpha}w_{2} \in
s_{\alpha}\WW^{-1}(\varpi \star \rho) \cap \WW^{-1}(\varpi \star \pi) $.

Conversely, suppose we are given elements $\varpi$, $s_{\alpha}$ as above. Then we set $w_{1} = \varpi$ and choose any
element $w_{2}\in \WW^{-1}(\varpi \star \pi) \cap s_{\alpha}\WW^{-1}(\varpi \star \rho)$. Then $w_{2} \in \WW^{-1}(\varpi
\star \pi)$ and so  $w_{1}w_{2} \in \WW^{-1}(\pi)$. Similarly $w_{1}s_{\alpha}w_{2} \in \WW^{-1}(\rho)$. The length constraint
follows from the Condition (\ref{I:T-21}) above.  This proves the proposition. 
\end{proof}

\begin{exmple}
Consider $\pi=[21|3]$. It is easily computed that $\WW^{-1}([21|3])=\{312\}$.
$\pi$ covers two $(2,1)$-involutions $\rho_1=[31|2]$ and $\rho_2=[23|1]$.
Note that $\WW^{-1}(\rho_1)=\{ 213 \}$,  $\WW^{-1}(\rho_2)=\{ 132 \}$.
For the covering $\rho_1 \lessdot \pi$, the transposition is $s= 132$ and
for the covering $\rho_2 \lessdot \pi$, the transposition is $s=321$.
Finally, the $\WW$-set of $\pi$ is $\{213\}$, and the only composition that is finer
than $2+1$ is $1+1+1$. Among all $(1,1,1)$-involutions, the only degenerate involution
whose $\WW$-set is a subset of $\{213\}$ is $\rho_3=[2|1|3]$. Therefore, we found all
degenerate involutions that are covered by $\pi$; these are $\rho_1,\rho_2$
and $\rho_3$.
\end{exmple}

%%% Local Variables:
%%% mode: latex
%%% TeX-master: "MC-0"
%%% End:

%                 +++ Ending file MC-0-sec5.tex +++   
%------------------------------------------------------------------------------------------------------------------------------------------------

%                 +++ Starting file MC-0-sec6.tex +++   

\section{Cell decomposition and barred permutations} \label{S:cells-and-bars}
In the case of $\mcal{X}_{n}$ (more generally for any spherical variety) the finitely many Borel orbits provide a
stratification of $\mcal{X}_{n}$ with each stratum indexed by degenerate involutions. The BB-decomposition produces cells that also provide a geometrically meaningful paving of
$\mcal{X}_{n}$. Unfortunately, the BB-decomposition is rarely a stratification (i.e., the closure of a BB-cell is not
necessarily the union of other BB-cells). Nonetheless, one can still define a partial order on the $T$-fixed points by closure relations. Given barred permutations $\pi, \pi^{\prime}$ we define the BB-ordering  
\begin{equation} \label{E:bborder}
  \pi \leq \pi^{\prime} \Longleftrightarrow X_{\pi}^{+} \subset \overline{X_{\pi^{\prime}}^{+}},
\end{equation}
where $X_{\pi}^{+}$ (resp. $X_{\pi^{\prime}}^{+}$) are the corresponding BB-cells. In particular cases, the order complex of such
orderings have been investigated by Knutson, see \cite{Knutson}.

However, a Borel orbit is contained in a unique BB-cell and as a result, in the spherical case, there is a
maximal dimensional $B$-orbit which is contained in a given BB-cell of $\mcal{X}_{n}$.  This information will be encoded by two
combinatorial maps 
\begin{equation*}
  \begin{tikzcd}
   \{ \mtxt{ degenerate involutions } \} \arrow[r, bend left, "\tau"] & \{ \mtxt{ barred permutations } \}  \arrow[l, bend left, "\sigma"] 
  \end{tikzcd}
\end{equation*}
that we describe below.

\subsubsection{The map $\tau$}

\begin{defn}\label{D:taumap}
Consider the following function \[
\tau : \{ \text{degenerate involutions} \} \rightarrow \{ \text{barred permutations}\}.
\]

\begin{itemize}

\item Let $\pi = [\pi_1 | \pi_2 | \dots | \pi_k]$ be a degenrate involution. For each $\pi_j$, order its
  cycles in lexicographic order by the smallest value in each cycle.

  \item Since $\pi$ is a $\mu$-involution, every cycle that occurs in each $\pi_j$
    has length one or two. Then add bars between each cycle.

\item  Take the equivalence class of the resulting $\mu$-involution. Concretely, we will remove braces from one-cycles
  $(i)$ and for a two-cycle $(ij)$ with $i < j$ will be converted into a string $ji$.
\end{itemize}
\end{defn}

It is easy to see that the process is well defined. For example,
$\tau( (68) | (25)(4)(9) | (13)(7) ) = [86 | 4 | 52 | 9 | 31 | 7]$.

We will now show how this map is connected to the BB-decomposition under the action of a generic one-parameter subgroup.  
\begin{defn} \label{D:admissibility}

A sequence $(a_1, a_2, \dots, a_{n})$ of integers satisfying the following conditions is called an \emph{admissible}
sequence of length $n$. 
\begin{enumerate}
\item \label{I:ad-cond1} $\sum_{i=1}^{n}a_{i} = 0 $;
\item \label{I:ad-cond2} the sequence $a_{i}$ is monotonically increasing;
  \item \label{I:ad-cond3} given indices $ i \leq j \leq k \leq l$ we have \[ a_{j}-a_{i} \leq a_{l} - a_{k}.\]
  \end{enumerate}
  
\end{defn}

\begin{lema}\label{L:admissible}
  Equivalently admissible sequences have the following properties:
  \begin{enumerate}[label={(\alph*)}]
\item $\sum_{i=1}^{n} a_{i} = 0 $;
  \item $a_1 < a_2 < \cdots < a_n$;
\item if $i, j, k < l$, then $a_i + a_j < a_k + a_l$;
\item if $i, j < k$, then $2a_i < a_j + a_k$.
\end{enumerate}

Admissible sequences exist. In particular,  for any positive integer $n$ the sequence  $\left ( n +2^{i}-2^{n} \right )$ for $0 \leq i \leq n-1$ is an admissible sequence.
\end{lema}
\begin{proof}
  The first assertion is trivial. The second assertion follows from the following observations.
  \begin{enumerate}[label={(\roman*)}]
  \item The sequence $(2^{i}-1)$ satisfies all conditions of admissibility, in
    \cref{D:admissibility}, except the condition (\ref{I:ad-cond1}). This follows
    from binary expansion of these integers.   
\item The conditions (\ref{I:ad-cond1}) and (\ref{I:ad-cond2}) are translation invariant. In other words, if any finite
  ordered sequence of integers $(a_{i})$ satisfies these conditions then for any integer $b$ the sequence $(a_{i}+ b)$ also
  satisfies these conditions. 
   \end{enumerate}

 \end{proof}

 An admissible sequence $\mathbf{a} \defeq (a_{1}, \dots, a_{n})$ defines an \emph{admissible} one-parameter subgroup ($1$-psg),
denoted by $\lambda_{\mbf{a}}: \mbb{G}_{m} \rarr SL_{n}$.
\begin{prop}\label{prop:quadric-limit}
Given any $\mu$-involution $\pi$, and any admissible $1$-psg $\lambda$, the point $Q_{\pi} \in \mcal{X}_{n}$ flows to
the torus fixed point $Q_{\tau(\pi)}$. In other words
\[ \lim_{t \rightarrow 0} \lambda(t) \cdot Q_{\pi} = Q_{\tau(\pi)}.\]
\end{prop}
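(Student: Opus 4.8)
The plan is to reduce the statement to a local computation on each factor of the fiber product description of the $G$-orbit, then track how an admissible $1$-psg acts on the toric coordinates of the affine chart $\overline{T \cdot h} \cong \mathbb{A}^{n-1}$. The key structural input is Remark~\ref{R:Fibstr}: the $G$-orbit $\mcal{O}^{\mu}$ decomposes as $G \times_{P_{\mu}} \mbb{O}^{\mu}$, and the limit computation is $SL_n$-equivariant, so it suffices to analyze the flow on the flag part and on each symmetric-space factor $SL_{m_i}/SO_{m_i}$ separately. Concretely, I would first use the $SL_n$-equivariance of the projection $p_{\pi} : \mcal{X}^{\mu(\pi)} \to G/P_{I(\pi)}$ (and the fact that $\lambda$ lands in $T$) to split the problem into (a) the limit of the flag $\mcal{F}_{\pi}^{\bullet}$ in the flag variety, and (b) the limit of the quadric on each sub-quotient $V_i/V_{i-1}$.

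\emph{Step one: the flag part.} Since $\lambda = \lambda_{\mathbf{a}}$ is a $1$-psg into the torus $T$ with strictly increasing weights $a_1 < \cdots < a_n$ (condition (b) of \cref{L:admissible}), the limit of a $T$-fixed-point's flag is the standard coordinate flag, and more generally the flow sorts the basis vectors by weight. The admissibility here guarantees genericity: the strict inequalities ensure the limiting flag is exactly the one specified by $\tau(\pi)$, namely the flag refined by the ordering of cycles within each string.

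\emph{Step two: the quadric on each factor.} On a fixed sub-quotient $V_i/V_{i-1}$ of dimension $\mu_i \le$ (arbitrary), the involution $\pi_i$ gives a quadric $Q_{\pi_i} = \sum x_{a} x_{b} + \sum x_{c}^2$. Under the induced $T$-action $D \mapsto D^{\mathbf{t}} [Q] D$, a cross term $x_a x_b$ scales with weight $a_a + a_b$ and a square term $x_c^2$ scales with weight $2a_c$. The crucial point, captured exactly by the reformulated admissibility conditions (c) and (d) of \cref{L:admissible}, is that $2a_c < a_a + a_b$ and that cross terms are themselves ordered among each other by the smallest index involved. Thus, after projectivizing and normalizing, as $t \to 0$ the \emph{dominant} surviving monomials are precisely the cross terms $x_a x_b$ with smallest leading index in each cycle, and the square terms collapse or rescale so that each two-cycle $(a,b)$ with $a<b$ degenerates into the monomial $x_b x_a$ arranged as the string $ba$ — which is exactly the output of the combinatorial map $\tau$ in \cref{D:taumap}. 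I would verify this first in the base case $SL_2/SO_2 \hookrightarrow \mathbb{P}^2$ (where $xy + \text{(correction)}$ flows to $xy$, matching $\tau$ converting $(a,b)$ to $ba$), and then invoke the product structure of the fibers to conclude in general.

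\emph{Main obstacle.} The hard part will be making the limit computation rigorous in the \emph{projective} space $\mbb{P}(\mbb{V})$ rather than on the affine matrix representatives, because the complete-quadric point records the entire sequence of induced quadrics on all exterior powers $\Lambda^i(k^n)$ simultaneously, not just the naive matrix of the quadratic form. One must check that the admissibility conditions are strong enough to order the weights consistently across \emph{all} the exterior-power factors at once — this is exactly why the four equivalent conditions in \cref{L:admissible} (and the explicit exponentially-separated sequence $n + 2^i - 2^n$) are needed, rather than mere monotonicity. I would therefore phrase the final verification in terms of the toric chart $\overline{T \cdot h}$ and the weights $-2\alpha_1, \dots, -2\alpha_{n-1}$ appearing in the $T$-action of \S\ref{SS:repthr}, confirming that the flow of $\lambda_{\mathbf{a}}$ sends a generic point of each $\mathbb{A}_S$ stratum to the distinguished $T$-fixed point $Q_{\tau(\pi)}$, and that the strict separation of weights prevents any accidental cancellation or tie that would produce a different limit.
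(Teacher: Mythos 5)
Your proposal follows essentially the same route as the paper's proof: both reduce, via the fibration $\mcal{X}^{\mu}\cong G\times_{P_{\mu}}\mbb{X}^{\mu}$ of Remark~\ref{R:Fibstr}, to a single non-degenerate block, and then finish by computing the flow in the toric chart $\overline{T\cdot h}\cong\mbb{A}^{n-1}$ with the weights $-2\alpha_{i}$, using the admissibility inequalities to order the monomial weights $a_{a}+a_{b}$ and $2a_{c}$. The only points to tighten are in your ``Step one'' and the phrase ``dominant surviving monomials'': the coarse flag $\mcal{F}_{\pi}^{\bullet}$ is a coordinate flag and hence already $\lambda$-fixed, so all the content lies in the degeneration of the quadrics on the sub-quotients (which is precisely what produces the refined flag), and in the complete-quadric limit \emph{every} cycle survives as its own block of the resulting chain, ordered by weight, rather than only the leading cross terms.
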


\begin{proof}
Let $\{\epsilon_{i}: i = 1, \ldots n \}$ denote the fundamental weights of $SL_{n}$. Given any admissible $1$-psg
$\lambda_{\mbf{a}}$ we scale the bilinear pairing between roots and co-roots such that $\langle \lambda_{\mathbf{a}}, \epsilon_{i} \rangle = a_{i}$.

It suffices to understand the flow under an admissible $1$-psg inside a closed $G$-orbit $\mcal{X}^{\pi}$. Consider the
barred permutations corresponding to $\pi = [\pi_{1}| \ldots| \pi_{k}]$, and the limit point $\tau(\pi) = [\pi_{1}^{\prime}|
\ldots| \pi_{m}^{\prime}]$. Using the description of the structure of the orbits $\mcal{X}^{\pi}$, see Remark
\ref{R:Fibstr}, it is clear that under the flow of a generic one parameter subgroup each component
$\pi_{i}$ of $\pi$ will independently fragment into sub-components $[\pi^{\prime}_{i_{1}}|\ldots|\pi^{\prime}_{i_{j}}]$ and
$\tau(\pi)$ will be obtained by concatenating these sub-components. In other words the barred permutation $\tau(\pi)$
will be obtained by successively subdividing $\pi$ and it suffices to show that the process outlined in \cref{D:taumap}
is indeed the right one. 

The allows us to reduce to the special case where $Q_{\pi}$ itself is a non-degenerate quadric. 
Precisely, we assume the following.
\begin{item}
\item[($\dagger$)]  \label{I:cond1} $\pi$ an involution in \(S_{n}\). If \( \pi = (a_{1},b_{1}),\ldots,(a_{k},b_{k})(c_{1})\ldots(c_{m})\) in cycle representation then \( Q_{\pi} = \sum_{i=1}^{k}x_{a_{i}}x_{b_{i}} + \sum_{j=1}^{m}x^{2}_{c_{j}}.\)
\end{item}

In this special case it is possible to calculate the limit using the higher adjugate map, see \cref{Ss:compq}. We will
present an alternate calculation. We will use heavily use the notation introduced in \S \ref{SS:repthr}.

Let us first assume that $\pi = [1,2,\ldots,n]$ is the trivial permutation in $S_{n}$ so $Q_{\pi} = \sum x_{i}^{2}$. In this case the
complete quadric $Q_{\pi}$ corresponds to $h$ in the $G$-representation $\mbb{V}$, and $h$ corresponds to  vector $(1,
\ldots, 1)$ in $\mbb{A}^{n-1}$. Concretely, under this correspondence, we get 
\begin{equation}\label{E:general-case}
  \lambda(t) \cdot h  \longleftrightarrow \sum_{\alpha \in \Delta} t^{\langle \lambda, - 2\alpha \rangle}e_{\alpha}.
\end{equation}

Note that the fundamental weights and simple roots of $SL_{n}$ are related by  $\alpha_{i} = \epsilon_{i}
-\epsilon_{i+1}$. Substituting this in \cref{E:general-case} we note that for any admissible $1$-psg the limit as $t\rarr 0$
is the origin in $\mbb{A}^{n-1}$. The correspondence between the toric stratum in $\mbb{A}^{n-1}$ and the $G$-orbit closures
in $\mcal{X}_{n}$ shows that the origin corresponds to the minimal closed orbit (see \cref{E:toric-strata}). This proves the assertion when
$\pi$ is the trivial permutation.

When $\pi$ is an arbitrary involution, still satisfying condition ($\dagger$) above, we note that $\pi$ is of the
form $w \cdot [1,2,\ldots,n]$ for some involution $w \in S_{n}$. In this case $\pi$ belongs to $\overline{T \cdot
  h}$ (from highest weight consideration), so acting by a $1$-psg $\lambda = \lambda_{\mbf{a}}$ we get  
\begin{equation}\label{E:special-case}
     \lambda(t) \cdot(w \cdot h) = \sum_{\mu} t^{ \langle \lambda, w \cdot (\mu - \rho)\rangle }h_{\mu} \leftrightarrow
    \sum_{\alpha} t^{\langle  \lambda, -2 (w \cdot \alpha) \rangle} \cdot e_{\alpha} =   \sum_{i=1}^{n-1} t^{2(a_{w(i+1)}-a_{w(i)})} \cdot e_{\alpha_{i}} 
  \end{equation}
The last equality above follows from the Weyl group action -- $ w \cdot \epsilon_{i} = \epsilon_{w(i)}$.Consider the limit
\begin{equation}\label{E:limit-case}
  \lim_{t \rarr 0} \left ( \sum_{i=1}^{n-1} t^{2(a_{w(i+1)}-a_{w(i)})} \cdot e_{\alpha_{i}} \right ).  
\end{equation}

Whenever $w(i+1) > w(i)$, the coefficient of $e_{\alpha_{i}}$ is zero, and hence the limit point will belong to the
strata $S \subset \Delta$ which is the complement of  descent set of the permutation $w$. This shows that the barred
permuation corresponding to the limit point indeed matches the description of $\tau(\pi)$. This proves the assertion. 
\end{proof}

\begin{exmple}\label{E:sigma}

Let $\pi = (68) | (25)(4)(9) | (13)(7)$. Then the flag underlying $Q_{\pi}$ is a two step flag
$\mathcal{F} : 0 = V_0 \subset V_1 \subset V_2 \subset k^9$
where successive quotients are spanned respectively by the standard basis vectors $\{e_6, e_8\}$, $\{e_2, e_4, e_5, e_9\}$
and $\{e_1, e_3, e_7\}$. The non-degenerate quadrics on the successive quotients are given by $Q_1 = x_6 x_8$, $Q_2 = x_2 x_5 + x_4^2 + x_9^2$ and
$Q_3 = x_1 x_3 + x_7^2$.  The quadric $Q_1 = x_6 x_8$ is $T$-fixed so it is
also $\lambda$-fixed.  The quadric $Q_2 = x_2 x_5 + x_4^2 + x_9^2$ is not
$\lambda$-fixed and
\[
\lambda(t) \cdot Q_2 = t^{-(a_2 + a_5)} x_2 x_5 + t^{-2a_4} x_4^2 + t^{-2a_9} x_9^2.
\]
Since $\lambda$ is admissible, $2 a_9 > a_2 + a_5 > 2 a_4$ and it follows that
$\lim\limits_{t \rightarrow 0} \lambda(t) \cdot Q_2$ is the sequence of quadrics
$x_4^2, x_2 x_5, x_9^2$.
A similar calculation for $Q_3 = x_1x_3 + x_7^2$ yields
\[
\lim_{t \rightarrow 0} Q_{\pi} = Q_{\pi'}
\]
where $\pi' = (68) | (4) | (25) | (9) | (13) | (7)$ corresponding to the barred permutation
$[86 | 4 | 52 | 9 | 31 | 7]$.
\end{exmple}

\begin{rem}
  As the proof of Proposition \ref{prop:quadric-limit} illustrates, our adhoc definition admissible $1$-psg is not
  conceptually necessary.
  Any other choice, as long as it is sufficiently generic, will be
  related to our choice by the action of a Weyl group element. In our experience, the choice we have made leads to
 simplest results in terms of the indices.   
\end{rem}

\subsubsection{The map $\sigma$}

Next, we define a map in the opposite direction
\[
\sigma : \{ \text{barred permutations} \} \rightarrow \{ \text{degenerate involutions} \}
\]
which describes the maximal dimensional $B$-orbit contained in a BB-cell. 

\begin{defn}\label{D:asc-desc}
Let $\alpha = [\alpha_1 | \alpha_2 | \dots | \alpha_k]$ be a barred permutation. Let $d_j$ denote the largest value occurring in $\alpha_j$, giving rise to a sequence $\mathbf{d}_{\pi} = (d_1, d_2, \dots, d_k)$.  For example, if $\alpha = [ 86 | 9 | 52 | 4 | 7 | 31 ]$, then $\mathbf{d}_{\alpha} = (8,9,5,4,7,3)$.  We say that $\pi$ has a \emph{descent} (resp., \emph{ascent}) at position $i$ if $\mathbf{d}_{\pi}$ has a descent (resp., ascent) at position $i$.

\end{defn}

\begin{defn}\label{D:tau}
  The function $\sigma$ is constructed by the following recipe.
  \begin{itemize}

  \item Given a barred permutation $\pi$, construct $\sigma(\pi)$ by adding one-cycle $(i)$ for every length one string $i$
    appearing in $\pi$ and the two cycle $(ij)$ for every length two string $ji$ appearing in $\pi$.

    \item Remove bars from positions of ascent in $\pi$ and retain the bars at positions of descent at $\pi$. 
  \end{itemize}

\end{defn}

\begin{prop} \label{prop:dense-B-orbit}
  Given any barred permutation $\pi$, there is a unique $B$-orbit of maximum dimension that is contained in the BB-cell
  $X_{\pi}^{+}$. The $\mu$-involution indexing this $B$-orbit is given by $\sigma(\pi)$.
\end{prop}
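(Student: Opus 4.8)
The plan is to realize $X_{\pi}^{+}$ as the disjoint union of the finitely many $B$-orbits it contains, to single out $\mscr{O}^{\sigma(\pi)}$ among them, and to prove that it is the unique orbit of top dimension; since $X_{\pi}^{+}$ is an irreducible affine space (Theorem \ref{T:BB}) and a finite disjoint union of locally closed subvarieties of an irreducible variety has a unique dense member, the unique top-dimensional orbit is automatically the dense one. I would not compute $\dim X_{\pi}^{+}$ directly.

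First I would describe $\tau^{-1}(\pi)$. Write $\mathbf{d}_{\pi}=(d_1,\dots,d_k)$ for the sequence of largest entries of the blocks of $\pi$. Since $\tau$ sorts the cycles of each block by largest entry and then inserts a bar between consecutive cycles, a degenerate involution $\gamma$ satisfies $\tau(\gamma)=\pi$ exactly when $\gamma$ is obtained from $\pi$ by fusing consecutive blocks into runs on which $\mathbf{d}_{\pi}$ is strictly increasing; equivalently, $\gamma$ is obtained by deleting any subset of the bars of $\pi$ sitting at ascents of $\mathbf{d}_{\pi}$, the bars at descents being forced to remain. In particular $\tau(\sigma(\pi))=\pi$, because $\sigma$ deletes precisely the ascent bars (Definition \ref{D:tau}). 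By Proposition \ref{prop:quadric-limit} the point $Q_{\gamma}\in\mscr{O}^{\gamma}$ flows to $Q_{\tau(\gamma)}$, and as every $B$-orbit lies in a single BB-cell we get $\mscr{O}^{\gamma}\subseteq X_{\pi}^{+}$ if and only if $\tau(\gamma)=\pi$; hence $X_{\pi}^{+}=\bigsqcup_{\tau(\gamma)=\pi}\mscr{O}^{\gamma}$.

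Next I would control dimensions combinatorially. From the grading of the Bruhat poset of $\mu$-involutions by the rank $L_{\mu}(\min)-L_{\mu}(\gamma)$, together with $L_{\mu}(\max)=0$ and the fact that $\mscr{O}^{\max}$ is the dense orbit of $\mcal{O}^{\mu}$, one obtains $\dim\mscr{O}^{\gamma}=\dim\mcal{O}^{\mu(\gamma)}-L_{\mu(\gamma)}(\gamma)$; and a direct count gives $\dim\mcal{O}^{\mu}=\binom{n}{2}+n-(\text{number of parts of }\mu)$, so fusing two adjacent blocks raises $\dim\mcal{O}^{\mu}$ by $1$. The crux is then the claim that deleting a single ascent bar raises $\dim\mscr{O}^{\gamma}$ by exactly $1$, i.e. leaves $L$ unchanged. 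Building an ascending run from left to right, such a step fuses a block with alphabet $A$ and involution $\rho$ to a single following cycle on an alphabet $A'$ whose largest element exceeds $\max A$. Comparing lengths, the flag term $\ell(w(-))$ changes by $-\operatorname{inv}(A,A')$, where $\operatorname{inv}(A,A')=\#\{(a,a')\in A\times A' : a>a'\}$ counts the pairs that get unsorted, while the involution term changes by $\tfrac12$ times the number of cross-inversions of the block-diagonal involution $\rho\sqcup\rho'$; an explicit count, using that $x\mapsto\rho(x)$ is a bijection of $A$, shows this cross-inversion number equals $2\operatorname{inv}(A,A')$, so the two contributions cancel and $L$ is preserved.

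Iterating, $\dim\mscr{O}^{\gamma}=\dim\mscr{O}^{\pi}+(\text{number of bars of }\pi\text{ deleted to form }\gamma)$ for every $\gamma\in\tau^{-1}(\pi)$. Since $\sigma(\pi)$ deletes all the ascent bars while every other $\gamma$ deletes a proper subset of them, $\mscr{O}^{\sigma(\pi)}$ is strictly the largest orbit in $X_{\pi}^{+}$; it is therefore the dense orbit and the unique $B$-orbit of maximal dimension, which is the assertion. The main obstacle is the length-preservation claim of the previous paragraph: one must check that the inversions transferred between the flag-sorting permutation and the block involution cancel exactly. This is where both hypotheses are indispensable—the identity fails across a descent, and it also fails if the fused cycle is replaced by a block containing several interleaved cycles—so the single-cycle, ascent structure enforced by $\tau^{-1}(\pi)$ and the involutivity of the blocks must be used in an essential way.
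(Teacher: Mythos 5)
Your proposal is correct, and its first half coincides with the paper's: both establish $\tau(\sigma(\pi))=\pi$, use Proposition \ref{prop:quadric-limit} to identify the $B$-orbits in $X_{\pi}^{+}$ with the fibre $\tau^{-1}(\pi)$, and recognize $\tau^{-1}(\pi)$ as the degenerate involutions obtained from $\pi$ by deleting a subset of the ascent bars of $\mathbf{d}_{\pi}$ (deleting a descent bar reorders the cycles and changes the $\tau$-image). The two arguments then diverge. The paper finishes by citing Brion--Luna \cite{BrionLuna87}: a BB-cell meets each $G$-orbit in at most one $B$-orbit and meets a unique maximal $G$-orbit in a dense $B$-orbit, so the dense orbit is the one obtained by deleting the maximal number of bars. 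You instead compute dimensions outright via $\dim\mscr{O}^{\gamma}=\dim\mcal{O}^{\mu(\gamma)}-L_{\mu(\gamma)}(\gamma)$, the count $\dim\mcal{O}^{\mu}=\binom{n}{2}+n-(\text{number of parts})$, and the claim that deleting an ascent bar preserves $L$. That claim is true, but be aware that bijectivity of $\rho$ alone only gives that the cross-inversion number of $\rho\sqcup\rho'$ equals $2\,\text{inv}(A,A')-2\,\#\{(a,a')\in A\times A' : a>a',\ \rho(a)>\rho'(a')\}$; the second term vanishes precisely because $A'$ is a single cycle with $\max A'>\max A$ (so $a>a'$ forces $\rho'(a')=\max A'>\rho(a)$), which is the point you correctly flag as essential at the end but do not fold into the count itself. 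Your route is longer but more self-contained: it avoids the external structural input from \cite{BrionLuna87}, effectively re-derives Lemma \ref{L:dim-estimate} (each deleted bar raises the orbit dimension by exactly one), and yields uniqueness of the top-dimensional orbit by strict monotonicity rather than as a by-product of density; the paper's appeal to Brion--Luna is shorter and applies uniformly to any smooth projective spherical variety. One cosmetic remark: Definition \ref{D:taumap} orders cycles by \emph{smallest} element, while the worked example and the proof of Proposition \ref{prop:quadric-limit} use the \emph{largest}; your reading (largest) is the one consistent with the rest of the paper.
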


\begin{proof}
  Note that $\tau(\sigma(\pi)) = \pi$, so it follows from \cref{prop:quadric-limit} that the
$B$-orbit indexed by $\sigma(\pi)$ is contained in the correct BB-cell. So we need to show that the $B$-orbit indexed by
the $\sigma(\pi)$ has the maximum dimension. It follows from the work of Brion and Luna, see \cite{BrionLuna87}, we know
that the intersection of a $G$-orbit and a BB-cell is either trivial or is an entire $B$-orbit, and moreover given any BB-cell there is a
maximum dimensional $G$-orbit such that the that the intersection is a dense $B$-orbit.

Note that Proposition \eqref{prop:quadric-limit} shows that the index corresponding to any $B$-orbit, contained in the BB-cell flowing to
$\pi$, must be obtained from $\pi$ by removing bars. The largest $B$-orbit will correspond to the degenrate
involution obtained by removing the maximum number of bars. 

Consider the sequence $\mbf{d}_{\pi}$ associated to $\pi = [\pi_{1}| \ldots |\pi_{k}]$, see \cref{D:asc-desc}. We will
show that we can always remove a bar at the position of ascent of $\mbf{d}_{\pi}$  where as removing a bar at a descent
is forbidden. Suppose a bar is removed at location $j$ of $\pi$, the resulting degenrate permuation $\pi^{\prime}$ is
shown below.

\begin{equation}\label{E:remove-bar}
    \begin{tikzcd}
        \pi = [\pi_{1}| \ldots | \pi_{j-1}| \pi_{j}| \ldots| \pi_{k}] \arrow[r, dashed] &  \pi^{\prime} \defeq
        [\pi_{1}|\ldots |\underbrace{\pi_{j-1}\; \pi_{j}}_{\text{Del. bar at} j}| \ldots| \pi_{k}] 
    \end{tikzcd}
\end{equation}

Suppose $\mbf{d}_{\pi}$ has ascent at $j$, then clearly $\tau(\pi^{\prime}) = \tau$. On the other hand, if
$\mbf{d}_{\pi}$ has a descent at $j$, then
\[\tau(\pi^{\prime}) = [\pi_{1}| \ldots| \pi_{j}| \pi_{j-1}| \ldots | \pi_{k} ]  \neq \pi. \]
So we are only allowed to remove bars at locations of ascent of $\mbf{d}_{\pi}$. This shows that $\sigma(\pi)$ is indeed
as claimed. 
\end{proof}

We consider an example to illustrate the proof of \cref{prop:dense-B-orbit}.
\begin{exmple} \label{E:flow}
  Consider the barred permutation $\pi = [86| 4| 52| 9| 31| 7]$ then as predicted by \cref{D:tau} we have $\sigma(\pi) =
  (68) | (25)(4)(9) | (13)(7)$. Let us consider the degenerate involution $\pi^{\prime} = [(86)(4)|52|9|31|7]$. We wish
  to show that the distinguished quadric $Q_{\pi^{\prime}}$ will not flow to $Q_{\pi}$ under an admissible $1$
  -psg. This is clear because the only non-degenerate quadric, which is not $T$-fixed, appearing in $Q_{\pi^{\prime}}$
  is $x_{8}x_{6}+  x^{2}_{4}$.  Under the flow of an admissible $1$-psg, see \cref{E:sigma}, it will flow to the point $[4|86|52|9|3|7] \neq \pi$.  
\end{exmple}

Given a barred permutation $\pi$, let $w(\pi)$ denote the unique permutation in $S_{n}$ which in one-line notation is
obtained by removing all bars.

We let  $\text{inv}(\pi)$ denote the number of length two strings that occur in $\pi$ and let $\text{asc}(\pi)$
denote the number of ascents in $\mbf{d}_{\pi}$. Let $w_{0}$ denote the element of maximal length in $S_{n}$.

\begin{lema} \label{L:dim-estimate}
The dimension of the $B$-orbit $\tau(\pi)$, corresponding to a barred permutation $\pi$, is given by $\ell(w_0) -
\ell(w(\pi)) + \text{inv}(\pi) + \text{asc}(\pi)$, where $\ell(-)$ is the length function on $S_{n}$. 
\end{lema}

\begin{proof}
Since $w(\pi)$ belongs to the $\WW$-set of the $B$-orbit containing $Q_\pi$, the codimension of the $B$-orbit containing
$Q_{\pi}$ in its $G$-orbit is $\ell(w(\pi))$; see \cite{CanJoyceWyser2}. The codimension of the closed $B$-orbit in the
$G$-orbit containing  $Q_{\pi}$ is $\text{inv}(\pi)$ and the dimension of the closed $G$-orbit is $\ell(w_0)$, it
follows that the dimension of the $B$-orbit containing $Q_{\pi}$ is $\ell(w_0) + \text{inv}(\alpha) -
\ell(w(\alpha))$. The result follows from the fact that the codimension of the $B$-orbit containing $Q_{\pi}$ in its closure is $\text{asc}(\pi)$.
\end{proof}

\subsubsection{Concluding Remarks}
In  Figure~\ref{fig:Cells}, we depict the cell decomposition of $\mathcal{X}_3$, each colored rectangle represents a
$B$-orbit parametrized by its corresponding $\mu$-involution, and the edges stand for the covering relations in Bruhat
order.  A BB-cell is a union of all Borel orbits of the same color.

We illustrate the resulting cell decomposition, when $n = 3$, in Figure \ref{fig:cell-decomp3}. The dimension of a cell
 corresponding to a vertex in the figure is equal to the length of any chain from the bottom cell.
 A vertex corresponding to a cell $X_{\pi}^{+}$ is connected by an edge to a vertex of a cell $X_{\pi^{\prime}}^{+}$ of dimension
 one lower if and only if $X_{\pi^{\prime}}^{+} \subseteq \overline{X_{\pi}^{+}}$.

\begin{figure}[htp]
\begin{minipage}{0.75\linewidth}
\centering
\begin{tikzpicture}
\begin{scope}[rotate = 45, scale = 0.35, every node/.style = {scale=0.65}]

\node [shape=rectangle,draw,fill=white] at (0,0) (a) {$[3|2|1]$};

\node [shape=rectangle,draw,fill=purple] at (-9,5) (b1) {$[3|21]$};
\node [shape=rectangle,draw,fill=blue!30!] at (-3,5) (b2) {$[32|1]$};
\node [shape=rectangle,draw,fill=red!20!] at (3,5) (b3) {$[2|3|1]$};
\node [shape=rectangle,draw,fill=orange!90!] at (9,5) (b4) {$[3|1|2]$};

\node [shape=rectangle,draw,fill=green!50!] at (-12.25,10) (c1) {$[2|31]$};
\node [shape=rectangle,draw,fill=orange!90!]  at (-7.25,10) (c2) {$[3|12]$};
\node [shape=rectangle,draw,fill=red!20!] at (-2.25,10) (c3) {$[23|1]$};
\node [shape=rectangle,draw,fill=brown!50!] at (2.25,10) (c4) {$[31|2]$};
\node [shape=rectangle,draw,fill=red!50!] at (7.25,10) (c5) {$[1|3|2]$};
\node [shape=rectangle,draw,fill=green!15] at (12.25,10) (c6) {$[2|1|3]$};

\node [shape=rectangle,draw,fill=green!50!] at (-12.25,15) (d1) {$[321]$};
\node  [shape=rectangle,draw,fill=brown]  at (-7.25,15) (d2) {$[1|32]$};
\node [shape=rectangle,draw,fill=green!15] at (-2.25,15) (d3) {$[2|13]$};
\node  [shape=rectangle,draw,fill=red!50!]  at (2.25,15) (d4) {$[13|2]$};
\node [shape=rectangle,draw,fill=yellow!10!] at (7.25,15) (d5) {$[21|3]$};
\node  [shape=rectangle,draw,fill=yellow]  at (12.25,15) (d6) {$[1|2|3]$};

\node  [shape=rectangle,draw,fill=brown] at (-9,20) (e1) {$[132]$};
\node  [shape=rectangle,draw,fill=yellow!10!] at (-3,20) (e2) {$[213]$};
\node  [shape=rectangle,draw,fill=yellow]  at (3,20) (e3) {$[1|23]$};
\node  [shape=rectangle,draw,fill=yellow]  at (9,20) (e4) {$[12|3]$};

\node [shape=rectangle,draw,fill=yellow] at (0,25) (f) {$[123]$};

\filldraw[dashed, thick,fill=red] (a) to (b1);
\draw[dashed] (a) to (b2);
\draw[dashed] (a) to (b3);
\draw[dashed] (a) to (b4);

\draw[dashed] (b1) to (c1);
\draw[dashed] (b1) to (c2);
\draw[dashed] (b2) to (c3);
\draw[dashed] (b2) to (c4);
\draw[dashed] (b3) to (c1);
\draw[very thick] (b3) to (c3);
\draw[dashed] (b3) to (c5);
\draw[dashed] (b3) to (c6);
\draw[very thick] (b4) to (c2);
\draw[dashed] (b4) to (c4);
\draw[dashed] (b4) to (c5);
\draw[dashed] (b4) to (c6);

\draw[very thick] (c1) to (d1);
\draw[dashed] (c1) to (d2);
\draw[dashed] (c1) to (d3);
\draw[dashed] (c2) to (d2);
\draw[dashed] (c2) to (d1);

\draw[dashed] (c2) to (d3);
\draw[dashed] (c3) to (d1);
\draw[dashed] (c3) to (d4);
\draw[dashed] (c3) to (d5);
\draw[dashed] (c4) to (d1);
\draw[dashed] (c4) to (d4);
\draw[dashed] (c4) to (d5);

\draw[dashed] (c5) to (d2);
\draw[very thick] (c5) to (d4);
\draw[dashed] (c5) to (d6);

\draw[very thick] (c6) to (d3);
\draw[dashed] (c6) to (d5);
\draw[dashed] (c6) to (d6);

\draw[dashed] (d1) to (e1);
\draw[dashed] (d1) to (e2);

\draw[very thick] (d2) to (e1);
\draw[dashed] (d2) to (e3);

\draw[dashed] (d3) to (e2);
\draw[dashed] (d3) to (e3);

\draw[dashed] (d4) to (e1);
\draw[dashed] (d4) to (e4);

\draw[very thick] (d5) to (e2);
\draw[dashed] (d5) to (e4);
\draw[very thick] (d6) to (e3);
\draw[very thick] (d6) to (e4);

\draw[dashed] (e1) to (f);
\draw[dashed] (e2) to (f);
\draw[very thick] (e3) to (f);
\draw[very thick] (e4) to (f);
\end{scope}

\end{tikzpicture}
\end{minipage}
\caption{Cell decomposition and the Bruhat order for $\mathcal{X}_3$.}
\label{fig:Cells}
\end{figure}
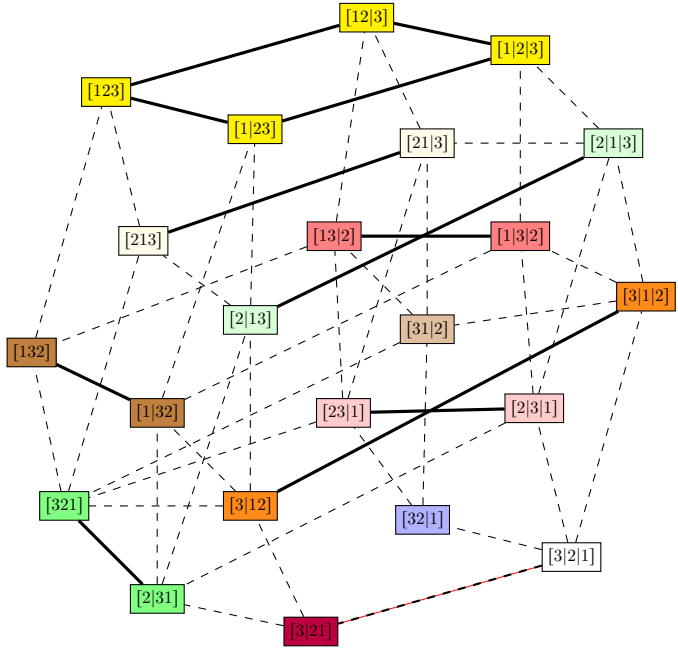

\begin{figure}[htp]
  
  \begin{minipage}{0.45\linewidth}
    \centering
    \begin{tikzpicture} 
  \begin{scope}[thick,scale=0.25, every node/.style={scale=0.6}]
\node [shape=rectangle,draw,fill=white] at (0,0) (a) {$[3|2|1]$};
\node [shape=rectangle,draw,fill=purple] at (-5,5) (b1) {$[3|21]$};
\node [shape=rectangle,draw,fill=blue!30!] at (5,5) (b2) {$[32|1]$};
\node [shape=rectangle,draw,fill=orange!90!] at (-10,10) (c1) {$[3|1|2]$};
\node [shape=rectangle,draw,fill=brown!50!] at (0,10) (c2) {$[31|2]$};
\node [shape=rectangle,draw,fill=red!20!] at (10,10) (c3) {$[2|3|1]$};
\node [shape=rectangle,draw,fill=green!15] at (-10,15) (d1) {$[2|1|3]$};
\node [shape=rectangle,draw,fill=green!50!] at (0,15) (d2) {$[2|31]$};
\node [shape=rectangle,draw,fill=red!50!] at (10,15) (d3) {$[1|3|2]$};
\node [shape=rectangle,draw,fill=yellow!10!] at (-5,20) (e1) {$[21|3]$};
\node  [shape=rectangle,draw,fill=brown]  at (5,20) (e2) {$[1|32]$};
\node  [shape=rectangle,draw,fill=yellow]  at (0,25) (f) {$[1|2|3]$};

\draw (a) to (b1);
\draw (a) to (b2);
\draw (b1) to (c1);
\draw (b2) to (c2);
\draw (b2) to (c3);

\draw (c1) to (d1);
\draw (c1) to (d2);
\draw (c2) to (d2);
\draw (c2) to (d3);
\draw (c3) to (d2);
\draw (c3) to (d3);

\draw (d1) to (e1);
\draw (d2) to (e1);
\draw (d2) to (e2);
\draw (d3) to (e2);

\draw (e1) to (f);
\draw (e2) to (f);
\end{scope}
\end{tikzpicture}
\caption{Poset of BB-cells of  $\mathcal{X}_3$}
\label{fig:cell-decomp3}
\end{minipage}
\hspace{1mm}
\begin{minipage}{0.45\linewidth}
    \begin{tikzpicture}
\begin{scope}[thick, scale=0.30, every node/.style={scale=0.55}]
\node [shape=rectangle,draw,fill=blue!20!] at (0,0) (a) {$[4|321]$};
\node [shape=rectangle,draw,fill=blue!20!]  at (-5,5) (b1) {$[3|421]$};
\node [shape=rectangle,draw,fill=blue!20!]  at (0,5) (b2) {$[4|132]$};
\node [shape=rectangle,draw,fill=blue!20!]  at (5,5) (b3) {$[4|213]$};
\node [shape=rectangle,draw,fill=white]  at (-7.5,10) (c1) {$[2|431]$};
\node [shape=rectangle,draw,fill=blue!20!]  at (-2.5,10) (c2) {$[3|142]$};
\node [shape=rectangle,draw,fill=blue!20!] at (2.5,10) (c3) {$[3|214]$};
\node [shape=rectangle,draw,fill=blue!20!]  at (7.5,10) (c4) {$[4|123]$};
\node [shape=rectangle,draw,fill=white]  at (-7.5,15) (d1) {$[1|432]$};
\node [shape=rectangle,draw,fill=blue!20!]  at (-2.5,15) (d2) {$[2|143]$};
\node [shape=rectangle,draw,fill=white]  at (2.5,15) (d3) {$[2|314]$};
\node [shape=rectangle,draw,fill=blue!20!]  at (7.5,15) (d4) {$[3|124]$};
\node [shape=rectangle,draw,fill=white]  at (-5,20) (e1) {$[1|243]$};
\node [shape=rectangle,draw,fill=white]  at (0,20) (e2) {$[1|324]$};
\node [shape=rectangle,draw,fill=blue!20!]  at (5,20) (e3) {$[2|134]$};
\node [shape=rectangle,draw,fill=white]  at (0,25) (f) {$[1|234]$};

\draw[-,very thick, violet] (a) to (b1);
\draw[-,very thick, violet] (a) to (b2);
\draw[-,very thick, violet] (a) to (b3);
\draw[-,thick] (b1) to (c1);
\draw[-,very thick, violet] (b1) to (c2);
\draw[-,very thick,violet] (b1) to (c3);
\draw[-,very thick,violet] (b2) to (c2);
\draw[-,very thick,violet] (b2) to (c4);
\draw[-,thick] (b3) to (c1);
\draw[-,very thick,violet] (b3) to (c3);
\draw[-,very thick,violet] (b3) to (c4);
\draw[-,thick] (c1) to (d1);
\draw[-,thick] (c1) to (d2);
\draw[-,thick] (c1) to (d3);
\draw[-,thick] (c2) to (d1);
\draw[-,very thick,violet] (c2) to (d2);
\draw[-,very thick,violet] (c2) to (d4);
\draw[-,thick] (c3) to (d3);
\draw[-,very thick,violet] (c3) to (d4);
\draw[-,thick] (c4) to (d1);
\draw[-,very thick,violet] (c4) to (d2);
\draw[-,very thick,violet] (c4) to (d4);
\draw[-,thick] (d1) to (e1);
\draw[-,thick] (d1) to (e2);
\draw[-,thick] (d2) to (e1);
\draw[-,very thick,violet] (d2) to (e3);
\draw[-,thick] (d3) to (e2);
\draw[-,thick] (d3) to (e3);
\draw[-,thick] (d4) to (e2);
\draw[-,very thick,violet] (d4) to (e3);
\draw[-,thick] (e1) to (f);
\draw[-,thick] (e2) to (f);
\draw[-,thick] (e3) to (f);
\end{scope}
\end{tikzpicture}
\end{minipage}
\caption{The dense $B$-orbits of BB-cells in $\mcal{X}_{3}$ wrt closure order.}
\label{fig:in-1+3}
\end{figure}

For $n \geq 3$, the Bia{\l}ynicki-Birula decomposition of $\mathcal{X}_n$ is not a stratification. To see this, we
consider the Bruhat order on $\mathcal{X}_3$, depicted in Figure~\ref{fig:Cells}. The closure of the pink cell
$X^{+}_{[1|3|2]}$ intersects the orange cell $X^{+}_{[3|1|2]}$ in the $B$-orbit $\mscr{O}^{[3|1|2]}$, which is non-empty, but not equal to the entire orange cell which also includes $\mscr{O}^{[3|12]}$.

It is desirable to have a combinatorial rule determining the (covering) relations of Bruhat order which does not go
through the costly inductive procedure given in Section \ref{S:Bruhat}. Given a composition $\mu$ of $n$, let us denote
by $\mc{B}_{Cell}(\mu)$ the set of all $\mu$-involutions $\pi_{\mu}$ such that Borel orbit $\mscr{O}^{\pi_{\mu}}$ is
dense in its corresponding BB-cell.

Experimentally, we have observed that the inclusion order restricted to $\mc{B}_{Cell}(\mu)$ is a ranked poset with a
minimal and a maximal element. For example this is depicted in Figure
\ref{fig:in-1+3}  where we consider the $\mcal{B}_{Cell}(1,3)$
as an embedded sub-poset in the closure order on all $(1,3)$-involutions. However we are unable to establish it in
general and we pose it as a conjecture. 

\begin{conj}
Fix a BB-cell decomposition of $\mcal{X}_{n}$ and consider all the Borel-orbits in $\mcal{X}_{n}$  which are dense in
some BB-cell. Denote this set by $\mc{B}_{Cell}(\mathcal{X})$. Then the Bruhat order on Borel orbit restricted to $\mc{B}_{Cell}(X)$ is a graded poset with a maximum and a minimum element.
\end{conj}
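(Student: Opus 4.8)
The plan is to transport the conjecture to the combinatorics of barred permutations and to nominate the orbit dimension as the rank function. By Proposition~\ref{prop:dense-B-orbit} the map $\sigma$ is a bijection of $\mcal{B}_n$ onto $\mc{B}_{Cell}(\mcal{X})$, with inverse the restriction of $\tau$ (since $\tau\circ\sigma=\mt{id}$), so it is equivalent to study the poset $(\mcal{B}_n,\preceq)$ in which $\gamma\preceq\gamma'$ means $\mscr{X}^{\sigma(\gamma)}\subseteq\mscr{X}^{\sigma(\gamma')}$. I would take the candidate rank function to be $\mt{rk}(\gamma):=\dim\mscr{O}^{\sigma(\gamma)}=\ell(w_0)-\ell(w(\gamma))+\mt{inv}(\gamma)+\mt{asc}(\gamma)$, read off from Lemma~\ref{L:dim-estimate}. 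Because the ambient Bruhat order on \emph{all} $B$-orbits is graded by dimension, strict monotonicity is automatic: $\gamma\precneq\gamma'$ forces $\mt{rk}(\gamma)<\mt{rk}(\gamma')$. Hence the entire content of the conjecture is the following sandwiching property: whenever $\gamma\precneq\gamma'$ with $\mt{rk}(\gamma')-\mt{rk}(\gamma)\geq 2$, there exists an intermediate $\gamma''$ with $\gamma\precneq\gamma''\precneq\gamma'$.

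The extremal elements are easy and I would dispose of them first. The closed $B$-orbit $[n|n-1|\cdots|1]$ lies in the flag-variety $G$-orbit $\mcal{O}^{(1,\dots,1)}$, is zero-dimensional, hence is dense in its own cell and is the global minimum of the ambient order; dually the dense $B$-orbit $[12\cdots n]$ of $\mcal{X}_n$ occupies the full-dimensional cell and is the global maximum. Being global extrema of the ambient order, they remain the unique minimum and maximum after restriction to $\mc{B}_{Cell}(\mcal{X})$.

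For the sandwiching property I would split covering pairs using Theorem~\ref{thm:Bruhat}. Suppose first that $\sigma(\gamma)$ and $\sigma(\gamma')$ have the same composition type $\mu$. Since any $B$-orbit lying strictly between two $B$-orbits of a common $G$-orbit again belongs to that $G$-orbit, every intermediate element is forced to have type $\mu$, and the problem localizes inside $\mcal{O}^\mu\cong G\times_{P_\mu}\mbb{X}^\mu$ (Remark~\ref{R:Fibstr}). Choosing the admissible $1$-psg compatibly with this bundle, I expect its BB-cells, and hence the dense-cell $B$-orbits, to factor as products of Schubert cells on $G/P_\mu$ with dense-cell orbits on each factor $\mcal{X}_{m_i}$; on the flag factor every Schubert cell is already a single dense $B$-orbit, ordered by the graded Bruhat order on $W/W_{I(\mu)}$, while on each $\mcal{X}_{m_i}$ one invokes the conjecture for smaller $n$. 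Gradedness of a product of graded posets then produces the intermediate element, yielding an induction on $n$ for the same-type covers.

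The genuinely hard part, and the reason the statement is only conjectural, is the inter-type covers of Theorem~\ref{thm:Bruhat}(i), where $\mu$ is covered by $\nu$ in refinement order and $\WW(\sigma(\gamma))\subset\WW(\sigma(\gamma'))$: these are the normal directions to a $G$-orbit boundary, where no product structure is available. Here the natural instinct is to take a saturated, dimension-one chain $\sigma(\gamma)=\pi_0\lessdot\cdots\lessdot\pi_r=\sigma(\gamma')$ in the ambient Bruhat order and push it into $\mc{B}_{Cell}(\mcal{X})$ by the cell-assignment map $\sigma\circ\tau$, which sends each $B$-orbit to the dense orbit of its BB-cell and satisfies $\pi\le\sigma(\tau(\pi))$. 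The obstruction is that $\sigma\circ\tau$ is \emph{not} order preserving: the failure of the BB-decomposition to be a stratification, discussed after Figure~\ref{fig:in-1+3}, is precisely the statement that $\overline{X_{\gamma'}^{+}}$ need not be a union of cells, so the images $\sigma(\tau(\pi_i))$ need not form a chain. The crux is therefore to show that such a chain can always be rerouted through dense-cell orbits of intermediate dimension; lacking a monotone closure operator onto $\mc{B}_{Cell}(\mcal{X})$, I expect this to require a delicate analysis of the normal-direction covers via the reverse $\WW$-set translations of Corollary~\ref{lem:w1}, simultaneously controlling the refinement type, the permutation $w(\gamma)$, and the ascent statistic $\mt{asc}(\gamma)$ appearing in the dimension formula.
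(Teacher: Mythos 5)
The statement you were asked to prove is posed in the paper as a \emph{conjecture}: the authors explicitly write that they have only verified it experimentally (e.g.\ for $\mcal{B}_{Cell}(1,3)$ in Figure~\ref{fig:in-1+3}) and ``are unable to establish it in general.'' There is therefore no proof in the paper to compare against, and your proposal does not supply one either. To your credit, the preliminary reductions you make are sound: $\sigma$ does identify $\mcal{B}_n$ with $\mc{B}_{Cell}(\mcal{X})$ with $\tau$ as inverse; orbit dimension (computed by Lemma~\ref{L:dim-estimate}) is strictly monotone along inclusions of orbit closures, so gradedness is indeed equivalent to the interpolation property for pairs with dimension gap at least $2$; the localization of same-type intermediates to a single $G$-orbit is correct; and the identification of $[n|n-1|\cdots|1]$ and $[12\cdots n]$ as the global minimum and maximum of the ambient order (hence of the restricted one) is fine, since the former is a zero-dimensional cell and the latter is the dense orbit of the open cell.

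The genuine gap is that neither substantive case is actually closed. For same-type covers you assert, with ``I expect,'' that an admissible $1$-psg induces a product decomposition of the BB-cells over $\mcal{O}^\mu\cong G\times_{P_\mu}\mbb{X}^\mu$ compatible with $\mc{B}_{Cell}$; this is plausible but is nowhere established in the paper, and even granting it, the resulting induction on $n$ recurses into the inter-type case, which you then concede. For the inter-type covers of Theorem~\ref{thm:Bruhat}(i) you correctly identify the obstruction --- $\sigma\circ\tau$ is not order-preserving precisely because the BB-decomposition fails to be a stratification, as the $[1|3|2]$ versus $[3|1|2]$ example after Figure~\ref{fig:in-1+3} shows --- and you explicitly leave the rerouting of chains through $\mc{B}_{Cell}(\mcal{X})$ unresolved. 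That unresolved step is the entire content of the conjecture, so the proposal should be read as a (correct and well-organized) reduction and diagnosis of the difficulty, not as a proof; on the positive side, your diagnosis of where the obstruction lies coincides exactly with the authors' reason for leaving the statement open.
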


\printbibliography

\end{document}